\documentclass[12pt,a4paper]{amsart}
\usepackage{amssymb,latexsym,enumerate,url,hyperref} 

\newcommand{\CC}{{\mathbb{C}}}
\newcommand{\FF}{{\mathbb{F}}}
\newcommand{\ZZ}{{\mathbb{Z}}}

\newcommand{\bC}{{\mathbf C}}
\newcommand{\bG} {\mathbf G}
\newcommand{\bL} {\mathbf L}

\newcommand{\bT} {\mathbf T}
\newcommand{\bZ}{{\mathbf Z}}
\newcommand{\bO}{{\mathbf O}}

\newcommand{\cE} {\mathcal E}

\newcommand{\fA} {\mathfrak A}
\newcommand{\fS} {\mathfrak S}

\newcommand{\Aut}{{{\operatorname{Aut}}}}
\newcommand{\df}{{{\operatorname{def}}}}
\newcommand{\diag}{{{\operatorname{diag}}}}
\newcommand{\Id}{{{\operatorname{Id}}}}
\newcommand{\Irr}{{{\operatorname{Irr}}}}
\newcommand{\Tr}{{{\operatorname{Tr}}}}
\newcommand{\GL}{\operatorname{GL}}
\newcommand{\PGL}{\operatorname{PGL}}
\newcommand{\GaL}{\operatorname{\Gamma L}}
\newcommand{\SL}{\operatorname{SL}}
\newcommand{\PSL}{\operatorname{PSL}}
\newcommand{\GU}{\operatorname{GU}}
\newcommand{\SU}{\operatorname{SU}}
\newcommand{\Sp}{\operatorname{Sp}}

\newcommand{\Spin}{\operatorname{Spin}}
\newcommand{\OO}{\operatorname{O}}
\newcommand{\SO}{\operatorname{SO}}

\newcommand{\Stab}{\operatorname{Stab}}
\newcommand{\Trace}{\operatorname{Trace}}

\newcommand{\tw}[1]{{}^{#1}\!}
\renewcommand{\sp}[1]{{<\!#1\!>}}

\let\eps=\epsilon
\let\la=\lambda
\let\si=\sigma
\let\tht=\theta
\let\vhi=\varphi

\newtheorem{thm}{Theorem}[section]
\newtheorem{lem}[thm]{Lemma}
\newtheorem{conj}[thm]{Conjecture}
\newtheorem{prop}[thm]{Proposition}

\newtheorem{cor}[thm]{Corollary}

\newtheorem*{conjA}{Conjecture A}
\newtheorem*{thmB}{Theorem B}

\newtheorem*{thmC}{Theorem C}
\newtheorem*{thmD}{Theorem D}

\theoremstyle{definition}
\newtheorem{rem}[thm]{Remark}

\newtheorem{exmp}[thm]{Example}

\numberwithin{equation}{section}

\def\irr#1{{\rm Irr}(#1)}
\def\ibr#1{{\rm IBr}(#1)}
\def\cent#1#2{{\bf C}_{#1}(#2)}
\def\nor{\trianglelefteq\,}

\begin{document}

\title[Zeros of characters and element orders]{Zeros of characters and orders\\ of elements in finite groups}

\author{Gunter Malle}
\address{FB Mathematik, RPTU Kaiserslautern, Postfach 3049,
  67653 Kaisers\-lautern, Germany.}
\email{malle@mathematik.uni-kl.de}
\author{Gabriel Navarro}
\address{Departament de Matem\`atiques, Universitat de Val\`encia, 46100
  Burjassot, Val\`encia, Spain}
\email{gabriel@uv.es}
\author{Pham Huu Tiep}
\address{Department of Mathematics, Rutgers University, Piscataway, NJ 08854,
  USA}
\email{tiep@math.rutgers.edu}

\thanks{The first author gratefully acknowledges financial support by the DFG,
  Project-ID 286237555 -- TRR 195. The work of the second author is supported
  by Grant PID2022-137612NB-I00 funded by
  MCIN/AEI/10.13039/501100011033 and ERDF ``A way of making Europe.” The third
  author gratefully acknowledges the support of the NSF (grant DMS-2200850),
  the Joshua Barlaz Chair in Mathematics, and the Charles Simonyi Endowment at
  the Institute for Advanced Study (Princeton).}

\keywords{Blocks, defects, zeros of characters, orders of elements, character
  degrees}

\subjclass[2010]{Primary 20C15, 20C33; Secondary 20C20, 20D06}

\date{\today}

\begin{abstract}
We investigate a beautiful conjecture of T.~Wilde on character values and
element orders of finite groups. We reduce it to a statement on nearly simple
groups that can be checked ``prime by prime". For these groups, we show that a
strong form of Wilde's conjecture holds in many important cases, and for primes
$p>5$ we are able to show the required statement for most classes of nearly
simple groups. The few remaining cases, however, seem to require information on
extensions of irreducible characters that are not available at the present
time.
\end{abstract}

\maketitle

\pagestyle{myheadings}

\section{Introduction}
 
The first theorem on zeros of characters of finite groups is due to W.~Burnside
and asserts that every non-linear irreducible character $\chi$ of a finite
group $G$ takes the value zero on some element (see \cite[Thm~3.15]{Is}).
(Much later on, it was proved in \cite{MNO}, using the classification of finite
simple groups, that this zero can be chosen to occur on an element of
prime power order.) R.~Brauer and C.~Nesbitt proved the most celebrated
connection between the order of elements and the \emph{codegree} $|G|/\chi(1)$
of $\chi$: if a prime $p$ does not divide the integer $|G|/\chi(1)$, then
$\chi(g)=0$ for all $g\in G$ of order divisible by~$p$ (see
\cite[Thm~8.17]{Is}). A more sophisticated version of this involving defect
groups and blocks is a classical consequence of Brauer's second main theorem,
see below.
\medskip

This paper investigates an intriguing conjecture of T.~Wilde \cite{Wi06}, which
relates orders of elements, zeros, and codegrees, implies the result of Brauer
and Nesbitt, and whose statement looks innocent.

\begin{conjA}[Wilde]
 Let $G$ be a finite group and let $\chi\in\Irr(G)$. If $\chi(g)\ne 0$ for
 some $g\in G$, then the order $o(g)$ of $g$ divides $|G|/\chi(1)$.
\end{conjA}

Our first main result is the reduction of Conjecture A to a stronger version of
the problem for nearly simple groups. Let us say that $(H,h,\chi)$
\emph{satisfies $(^*)$} if $H$ is a finite group, $h\in H$, the derived
subgroup $L=[H,H]$ is quasi-simple, $\bZ(L)=\bZ(H)$ is cyclic,
$H=L\langle h\rangle$, and $\chi \in \Irr(H)$ is faithful with $\chi(h)\ne 0$.
 
\begin{thmB}
 Assume that 
 \begin{equation}   \label{condB}
   o(h\bZ(H)) \quad \text{divides} \quad \frac{|H:\bZ(H)|}{\chi(1)}
 \end{equation}
 for all $(H,h,\chi)$ satisfying $(^*)$. Then Conjecture~A is true.
\end{thmB}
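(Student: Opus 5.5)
We argue by minimal counterexample. Let $(G,\chi,g)$ be a counterexample with $|G|$ minimal, so $\chi(g)\ne0$ but $o(g)\nmid |G|/\chi(1)$. The statement is local at each prime: writing $g=g_pg_{p'}$, there is a prime $p$ with $o(g_p)$ not dividing $(|G|/\chi(1))_p$. Replacing $g$ by $g_p$ is not automatic, since $\chi(g_p)$ may vanish. We therefore keep $g$ and use the reductions below, each of which preserves both the nonvanishing and the failure of divisibility.

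\emph{Step 1: reduce to $G=N\langle g\rangle$ with a special normal subgroup $N$.} Let $N\trianglelefteq G$ and $H=N\langle g\rangle$. Clifford theory and Gallagher-type arguments express $\chi_H$ in terms of characters $\psi\in\Irr(H)$ with $\psi(g)\ne0$ for some constituent. The codegree behaves well: $\chi(1)/\psi(1)$ divides $|G:H|$ (or at least $|G:H|\,\psi(1)\ge\chi(1)$ with the needed $p$-parts, using that $\chi(1)/\psi(1)$ divides $|G:H|$ for a suitable constituent via the Mackey or induction formula). Hence $|H|/\psi(1)$ divides $|G|/\chi(1)$, and minimality forces $G=N\langle g\rangle$ for every proper normal $N$ with $N\langle g\rangle<G$. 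Similarly, passing to $G/\ker\chi$ is harmless: $o(g\ker\chi)$ divides $o(g)$. More carefully, we will show that the $p$-part of $o(g)$ that fails is detected modulo the kernel, using that elements of $\ker\chi$ act as the scalar $\chi(1)$. So $\chi$ is faithful, and $\bZ(G)$ is cyclic.

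\emph{Step 2: the inductive step via the Brauer--Nesbitt type argument.} Take a minimal normal subgroup, or more precisely a normal subgroup $N$ maximal subject to $G/N$ cyclic generated by $gN$. If $\theta\in\Irr(N)$ lies under $\chi$ and is $G$-invariant, we use projective representations and character triple isomorphisms to replace $(G,N,\theta)$ by a triple with $N$ central. In that central case the claim follows from the structure of $\chi$ on the coset $gN$ together with the induction hypothesis applied to $N$. If $\theta$ is not invariant, $\chi$ is induced from the stabiliser $T$. Since $\chi(g)\ne0$, some conjugate of $g$ lies in $T$, and the induction hypothesis for $T$ gives $o(g)\mid |T|/\psi(1)=|G|/\chi(1)$, a contradiction. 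Thus every irreducible constituent of $\chi_N$ is invariant for all normal $N$. A standard argument (Gallagher, Schur multipliers) then forces the generalized Fitting subgroup to be a central product of a cyclic center with a single quasisimple component $L$ normalised by $g$. If $g$ permutes several components, then $\chi_{L^k}$ is a tensor power and $\chi(g)$ is computed via the trace formula for tensor induction: $\chi(g)$ involves $\psi(g^k)$ on one component. This reduces us to $g^k$ and a single component, where the codegree multiplies accordingly.

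\emph{Step 3: land in $(^*)$.} With $L=[H,H]$ quasisimple, $H=L\langle g\rangle$ and $\bZ(L)=\bZ(H)$ cyclic, the hypothesis \eqref{condB} gives $o(g\bZ(H))\mid |H:\bZ(H)|/\chi(1)$. The central part is handled separately: $\bZ(H)$ acts by a faithful linear character, so $|\bZ(H)|$ contributes exactly the missing factor, giving $o(g)\mid o(g\bZ)\,|\bZ|$. I expect the main obstacle to be Step 2, namely controlling codegree divisibility under character triple isomorphisms and tensor induction. This requires keeping track of how $|G|/\chi(1)$ transforms without losing $p$-parts, especially when $g$ permutes components and the relevant element becomes $g^k$.
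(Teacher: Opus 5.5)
Your proposal has a genuine gap at its core: a minimal counterexample on $|G|$ for the absolute statement of Conjecture~A cannot carry out the reductions you describe. The paper instead proves the relative statement of Theorem~\ref{thm:reduction}, by induction on $|G:N|$, and gets Conjecture~A by taking $N=1$. That statement reads: for $N\trianglelefteq G$, $\theta\in\Irr(N)$ and $\chi\in\Irr(G\mid\theta)$ with $\chi(gn)\neq0$ for some $n\in N$, the order $o(gN)$ divides $|G:N|\theta(1)/\chi(1)$. Only $o(gN)$, $|G:N|$, $\chi(1)/\theta(1)$ and nonvanishing somewhere on the coset $gN$ are invariants of a character triple. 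This is why the paper may pass to a triple with $N$ central, $\theta$ faithful linear, and a representative $y\in gN$ with $\langle y\rangle\cap N=1$ and $\chi(y)\neq0$.

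In your Step~2 the same replacement leaves you nothing to apply. The new group is not guaranteed to be smaller, and $o(g)$ and $|G|/\chi(1)$ are not preserved. Moreover, ``the induction hypothesis applied to $N$'' would need $\theta(g^{a})\neq0$ for $a=o(gN)$, which does not follow from $\chi(g)\neq0$. Your Step~1 divisibility claim is simply false when $H=N\langle g\rangle$ is not normal. In $\fS_3$ with $N=1$, $H=\langle g\rangle$ of order $2$ and $\chi(1)=2$, we get $|H|/\psi(1)=2$, which does not divide $|G|/\chi(1)=3$. What replaces it in the paper is a chief factor $K/N$ with $\chi_K=e\eta$ and $H=K\langle g\rangle$. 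Induction for $(G,K,\eta)$ gives $o(gK)\mid|G:K|\eta(1)/\chi(1)$. Induction for $(H,N,\theta)$, using $\nu_K=\eta$ since $H/K$ is cyclic, gives $o(gN)\mid|H:N|/\eta(1)=o(gK)\,|K:N|/\eta(1)$. Together these force $G=K\langle g\rangle$, and likewise $N=\bZ(G)$. Neither bound is an instance of the absolute statement.

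Two further pieces are missing.

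\emph{The structure of $G$.} Quasi-primitivity does not force $F^*(G)$ to be a cyclic group centrally joined to a single quasisimple component. For example, $\SL_2(3)$ with a faithful character of degree~$2$ has $F^*(G)=Q_8$. The paper handles solvable $G/N$ by Wilde's Corollary~4.2: with $N$ central and $\chi$ faithful and primitive, $\chi(1)$ divides $|G:A|$ for the abelian subgroup $A=N\langle g\rangle$. In the non-solvable case it never looks at $F^*(G)$. The chief-factor reduction gives $G=K\langle g\rangle$ with $K/N\cong S^n$, and then one needs four things:
\begin{enumerate}[\rm(i)]
 \item the explicit formula $\chi(g)=\Tr(h)$, where $h$ is the action of $g^n$ on the tensor factor $V_1$;
 \item the hypothesis applied to $M_1=\langle K_1,g^n\rangle$ acting on $V_1$;
 \item the observation that $g^{ns}$ is then scalar on every $V_i$;
 \item the count $|G:N|/\chi(1)=c\,(|S|/d)^{n-1}\cdot n|M_1:Z_1|/d$.
\end{enumerate}
Your ``$\psi(g^k)$'' sketch points this way but is not yet an argument.

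\emph{Landing in $(^*)$.} In Step~3 you simply assume $H=L\langle g\rangle$ with $\bZ(H)=\bZ(L)$. The reduction only produces $\langle L,\bZ(G),g\rangle$, possibly with $\bZ(G)>\bZ(L)$. To reach $(^*)$ you need the rescaling of Lemma~\ref{2aqs}. Write $g^a=tz$ with $t\in L$ and $z=\zeta\cdot\Id$, choose $\gamma$ with $\gamma^a=\zeta$, and replace $g$ by $\gamma^{-1}g$ inside $\GL(V)$. This preserves $o(g\bZ(G))$, the codegree $|G:\bZ(G)|/\chi(1)$, and nonvanishing of the trace.
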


(Recall that $|H:\bZ(H)|/\chi(1)$ is always an integer by \cite[Thm~3.12]{Is}.)
So the question now is whether we can prove that nearly simple groups that
satisfy $(^*)$ also satisfy Condition~\eqref{condB}.
\medskip

Of course, Conjecture A, and Condition~\eqref{condB} in Theorem~B, can be
formulated (and checked) one prime at a time: if $\chi(g)\ne 0$, then we only
need to prove that $o(g)_p$ divides $(|G|/\chi(1))_p$, where $n_p$ is the
largest power of~$p$ dividing the integer~$n$. Indeed, in what follows we will
frequently work with Condition~\eqref{condB} at a fixed prime~$p$.
(However, a word of caution is necessary here: we cannot
replace $g$ by the $p$-part~$g_p$ of $g$, since $\chi(g)\ne 0$ does not imply
that $\chi(g_p) \ne 0$. An example of this is $\fA_{10}$, $p=2$, $o(g)=6$ and
$\chi(1)=84$.) This allows us to bring modular representation theory to the
problem. 
\medskip

An easy consequence of Brauer's second main theorem states that if
$\chi(g)\ne0$ for some $g\in G$ and $\chi\in\Irr(G)$ in a Brauer $p$-block $B$
of~$G$, then $g_p$ lies in some defect group of~$B$ (see \cite[Cor.~5.9]{Na98}).
Thus, Conjecture~A holds for~$G$ (\emph{at the prime $p$}) if whenever
$\chi\in\Irr(G)$ lies in a $p$-block $B$ of $G$ with defect group~$D$, then the
exponent $\exp(D)$ of~$D$ satisfies
$$\exp(D) \le \left(\frac{|G|}{\chi(1)}\right)_p \, . \eqno{(\ddagger)}$$
If this is satisfied for all $\chi\in\Irr(B)$ we say \emph{$B$ satisfies
Condition $(\ddagger)$}, and if it holds for all $p$-blocks of $G$, we say
\emph{$G$ satisfies Condition $(\ddagger)$ at $p$}. Recalling that the
\emph{defect $\df(\chi)$} of $\chi$ is defined
by $p^{\df(\chi)}=(|G|/\chi(1))_p$, this may be reformulated as saying that
$$\exp(D)\le p^{\df(\chi)}\qquad\text{for all $\chi\in\Irr(B)$}.$$
It is not the first time that orders of elements in a defect group and defects
of characters are related: it is a deep theorem of Brauer \cite[(6C)]{B} that
$$\exp(\bZ(D))\le p^{\df(\chi)}\qquad\text{for all $\chi\in\Irr(B)$}.$$
In fact, by a conjecture of Robinson, whose proof was recently completed in
\cite{KM25} using the classification of finite simple groups, it is even true
that
$$|\bZ(D)|\le p^{\df(\chi)}\qquad\text{for all $\chi\in\Irr(B)$}.$$
Of course, neither of these imply $(\ddagger)$ in general.

Although Condition~$(\ddagger)$ is frequently satisfied, for example whenever
$D$ is abelian by Brauer's result, unfortunately it does not always hold.
The easiest example is $\GL_2(3)$ for $p=2$, but there are instances of this
for all primes as we will see in Example~\ref{gunterexmp}.
(We remark here that for $p$-solvable groups and primes $p>3$,
Condition~$(\ddagger)$ always holds by \cite[Thm~A]{MN22}.)

\smallskip
In fact, we will also consider the following variant of Condition $(\ddagger)$,
which is equivalent to $(\ddagger)$ if the Sylow $p$-subgroup $\bZ(G)_p$ of
$\bZ(G)$ is trivial:
$$\exp(D/\bZ(G)_p) \le \left(\frac{|G:\bZ(G)|}{\chi(1)}\right)_p \, .
  \eqno{(\ddagger^\star)}$$
The importance of Condition $(\ddagger^\star)$ for us is that if a quasi-simple
group $L$ satisfies it at $p$, then each $(H,h,\chi)$ with $[H,H]=L$
satisfying~$(^*)$ fulfils \eqref{condB} at $p$. (See
Proposition~\ref{prop:to qs} below.)
This allows us to prove Wilde's conjecture in many cases.

\begin{thmC}
 Suppose that $L$ is a quasi-simple group and $p$ is a prime. Then $L$
 satisfies Conditions~$(\ddagger)$ and $(\ddagger^\star)$ both at $p$ in the
 following cases:
 \begin{enumerate}[\rm(1)]
  \item $L=\fA_n$ for $n\ge5$ and all $p$;
  \item $L=2.\fA_n$ for $n\ge5$ and $p>2$;
  \item $L/\bZ(L)$ is simple of Lie type in characteristic not $p$, $p>2$, and
   $p{\not|}(n,q-\eps)$ when $L/\bZ(L)=\PSL_n(\eps q)$, and $p>5$ if $L/\bZ(L)$
   is of exceptional type;
  \item $L/\bZ(L)$ is simple of Lie type in characteristic~$p$, where $p>2$ if
   $L$ is an exceptional covering group; or
  \item $L/\bZ(L)$ is a sporadic simple group and $p>2$;
 \end{enumerate}
 \end{thmC}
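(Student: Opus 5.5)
The strategy is to verify $(\ddagger)$ and $(\ddagger^\star)$ block by block, so the first step is a reduction. If the defect group $D$ of a $p$-block $B$ is abelian, Brauer's theorem $\exp(\bZ(D))\le p^{\df(\chi)}$ already gives $(\ddagger)$. For $(\ddagger^\star)$ I would argue as follows. Since $\bZ(L)_p\le D$, passing from $L$ to $L/\bZ(L)_p$ lowers $|L|_p$ and $\exp(D)$ by compatible amounts: the defect group of the dominated block of $L/\bZ(L)_p$ is $D/\bZ(L)_p$. So $(\ddagger^\star)$ for $L$ amounts to checking $(\ddagger)$ for the blocks of $L/\bZ(L)_p$, possibly using a faithful-character bookkeeping.

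The remaining blocks have nonabelian defect. For these I would use the crude bound
$$p^{\df(\chi)}\ge |D|/p^{h(\chi)},$$
where $h(\chi)$ is the height of $\chi$. This reduces the task to showing that heights are bounded by $\log_p(|D|/\exp(D))$. Equivalently, it suffices to show that the character degrees in $B$ satisfy $\chi(1)_p\le |L|_p/\exp(D)$.

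With that reduction in hand, I would treat the families in turn.

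\emph{Alternating groups, case (1).} Blocks are labelled by $p$-cores. The defect group of a block of weight $w$ is a Sylow $p$-subgroup of $\fS_{pw}$, intersected with $\fA_n$ when $p=2$, so $\exp(D)=p^{\lfloor\log_p(pw)\rfloor}$. The heights can be read off the hook formula through the $p$-quotient. I would show that $\nu_p(|\fS_n|/\chi(1))$ is at least the number of hooks of length divisible by $p^k$ for the maximal $k$ with $p^k\le pw$, which is at least $k$. This follows by removing a single $p^k$-hook, which exists by taking the $p$-core tower.

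\emph{Double covers, case (2).} For $2.\fA_n$ with $p$ odd, the spin blocks are treated via bar-cores (Humphreys). The defect groups are again Sylow in $\fS_{pw}$, so the same hook counting works with bar lengths.

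\emph{Lie type in non-defining characteristic, case (3).} Here I would use the Broué--Michel / Cabanes--Enguehard description. For $p>2$ with the stated exclusions, and $p>5$ for exceptional types, a block lies in $\cE_p(G,s)$ and is parametrised by a $d$-cuspidal pair $(\bL,\lambda)$, where $d$ is the order of $q$ mod $p$. Its defect group is an extension of the Sylow $p$-subgroup of $\bZ(\bL)^F$ by a Sylow $p$-subgroup of the relative Weyl group $W_G(\bL,\lambda)$. The degree formula $\chi(1)_p=|G:\bL|_p\cdot(\text{unipotent degree of } W\text{-character})_p$ via Jordan decomposition then reduces $(\ddagger)$ to a statement about relative Weyl groups, which are wreath-product type for classical groups. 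Under the exclusions $p\nmid(n,q-\eps)$, the centre issue disappears and $(\ddagger)$ and $(\ddagger^\star)$ coincide. For exceptional types with $p>5$, the relevant Sylow $d$-tori give abelian defect groups except in a few cases handled by the bound above.

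\emph{Defining characteristic, case (4).} The blocks are the principal block of full defect plus defect-zero blocks (Humphreys). Defect zero is trivial. For the principal block, $\exp(U)$ for a Sylow $p$-subgroup $U$ is at most $p$ times something like the Coxeter number, and a character of maximal $p$-part is the Steinberg character. For every $\chi$ in the principal block, $\chi(1)_p\le q^{N}/q$ unless $\chi=\mathrm{St}$, and $\mathrm{St}$ lies in a defect-zero block. This gives $|L|_p/\chi(1)_p\ge q\ge$ ... I would need $\exp(U)\le q$ times a small factor. So the sharper plan here is to use Deligne--Lusztig degree polynomials: non-Steinberg characters have $q$-part at most $q^{N-h+1}$ or so, where $h$ is the Coxeter number, while $\exp(U)\approx p^{\lceil\log_p h\rceil}\cdot q/p$. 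Exceptional covers would be handled by checking the known tables.

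\emph{Sporadic groups, case (5).} These are checked directly from the GAP character table library and the known defect groups.

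The main obstacle is case (3). There the height zero and height bounds for characters in nonunipotent blocks of groups with disconnected centre need a careful Jordan decomposition compatible with blocks (Bonnafé--Rouquier and Bonnafé--Dat--Rouquier). The comparison of $\exp(D)$ with relative Weyl group combinatorics, notably wreath products $C_{p^a}\wr\fS_w$, is delicate. I would try first to reduce to unipotent blocks of the dual Levi via Bonnafé--Dat--Rouquier, which preserves defect groups and relative heights, and then reuse the alternating-group hook argument for the $\fS_w$ factor.
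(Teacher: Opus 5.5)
Your case-by-case architecture matches the paper's: Theorem~C is assembled from Propositions~\ref{prop:spor}, \ref{prop:Sn}, \ref{prop:2Sn odd}, \ref{prop:defchar} and Theorem~\ref{thm:ell>7}. However, several of your concrete steps fail as stated.

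\textbf{Case (1).} A hook of length divisible by $p^k$ need not exist when $p^k\le pw$. For $\la=(2,2)$, $p=2$, $w=2$, $k=2$, the hook lengths are $3,2,2,1$. What is true, and suffices, is that $\la$ has exactly $w$ hooks of length divisible by $p$. Hence $\nu_p(|\fS_n|/\chi^\la(1))\ge w$ and $p^w\ge pw\ge\exp(D)$; the same correction applies to bar lengths in case~(2). For $\fA_n$ at $p=2$ you must also absorb the loss of $1$ in the defect for characters that do not split. The paper does this via $2^w\ge 2\cdot 2w$ for $w\ge4$, $\exp(D)=4$ for $w=3$, and abelian defect for $w\le2$.

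\textbf{Reduction of $(\ddagger^\star)$.} Reducing $(\ddagger^\star)$ to $(\ddagger)$ for $L/\bZ(L)_p$ is not valid in general, since characters of $B$ that are faithful on $\bZ(L)_p$ do not lie in the dominated block. It is harmless only because $\bZ(L)_p=1$ in every case of the theorem except the covers in (4) and (5) at $p=3$ (e.g.\ $3.\fA_6$, $3.\OO_7(3)$, $3.McL$). There $(\ddagger^\star)$ itself has to be checked from the tables.

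\textbf{Case (4).} $\exp(U)$ carries no factor $q/p$; it depends only on $p$ and the rank, e.g.\ $U$ is elementary abelian in $\SL_2(q)$. Also, the bound $\chi(1)_p\le q^{N-h+1}$ fails for non-unipotent $\chi$: for $G_2(q)$ and $s$ with centraliser of type $A_2$ one gets $\chi(1)_p=q^3$. This is why the paper bounds unipotent characters via $A_\chi$ and non-unipotent ones via $N-N_1$ separately (Table~\ref{tab:Achi}). The inequality survives once this is corrected.

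\textbf{Case (3): the substantive gap.} Your proposed ``reduction to a statement about relative Weyl groups'' does not reach most of the characters in a block. Already for a unipotent block $B$, $\Irr(B)$ contains, besides the unipotent characters in the $d$-Harish-Chandra series of $(\bL,\la)$, all $\chi\in\cE(G,t)$ for suitable $\ell$-elements $t\in G^*$. The defects of these are governed by $\bC_{G^*}(t)$ (for linear groups, $\GL_{n_0}(q)\times\prod_i\GL_{n_i}(q^{d\ell^{e_i}})$), not by $W_G(\bL,\la)$. Proposition~\ref{prop:unip class} handles them via Jordan decomposition and a height bound in the factors, giving $\df(\chi)\ge rx+\sum_i n_i(r+e_i)$, which is compared with $\exp(D)\le b\ell^r$.

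Moreover, the hypothesis $\ell\nmid(n,q-\eps)$ is not just about the centre. For $\SL_n(\eps q)$ with $\ell\mid q-\eps$ but $\ell\nmid n$, the centre is an $\ell'$-group, yet $|\GL_n(\eps q):\SL_n(\eps q)|=q-\eps$ is divisible by $\ell$. So defects can drop by $\nu_\ell(q-\eps)$ while the exponent need not, and $(\ddagger)$ for $\GL_n(\eps q)$ does not descend. The missing idea is the strengthened inequality $(q-\eps)_\ell\,\chi(1)_\ell\,\exp(D)\mid|G|_\ell$ of Proposition~\ref{prop:unip GL}. It holds for the unipotent block of $\GL_m(\eps q)$ unless $m$ is an $\ell$-power, failing exactly for the Coxeter-torus characters of Remark~\ref{rem:GL} and Example~\ref{gunterexmp}. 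Proposition~\ref{prop:dagger SL} combines it with $\ell\nmid n$ to gain the extra factor $(q-\eps)_\ell$ from the Bonnaf\'e--Dat--Rouquier Levi.

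Finally, your ``crude bound'' $p^{\df(\chi)}\ge|D|/p^{h(\chi)}$ is actually an identity, so it only moves the problem to bounding heights. The non-abelian-defect $7$-blocks of $E_7(q)$ and $E_8(q)$ still require the explicit analysis of Propositions~\ref{prop:exc good} and~\ref{prop:exc 5}.
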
 

When Condition $(\ddagger^\star)$ fails for a certain quasi-simple group, then
we are forced to work directly with nearly simple groups $H$ satisfying
Condition~$(^*)$ to prove Condition~\eqref{condB} in Theorem~B. This is done
in a number of cases; in particular see Proposition~\ref{prop:sl2}
in the critical case $2<p|(n,q-1)$ for $[H,H] = \SL_n(q)$. 

\begin{thmD}
 Condition~\eqref{condB} holds for all groups $H$ with associated simple
 group $S$ one of $\fA_n$ ($n\ge5$), $\tw2B_2(q^2)$, $^2G_2(q^2)$, or a
 sporadic simple group. Moreover, it holds for $S$ simple of Lie type in
 characteristic~$p$ at the prime~$p$.
\end{thmD}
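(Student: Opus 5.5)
Theorem~D will be proved prime by prime, using Theorem~C together with the principle stated just before it: if the quasi-simple group $L=[H,H]$ satisfies $(\ddagger^\star)$ at $p$, then every $(H,h,\chi)$ satisfying $(^*)$ fulfils \eqref{condB} at $p$ (Proposition~\ref{prop:to qs}). So for each listed $S$ and each prime $p$, the first step is to check whether every quasi-simple $L$ with $L/\bZ(L)=S$ satisfies $(\ddagger^\star)$ at $p$. If it does, we are done at that prime. Only the finitely many leftover configurations need to be treated directly with $H$.

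\textbf{Alternating groups and Lie type in characteristic $p$.} For $S=\fA_n$, Theorem~C(1),(2) covers $L=\fA_n$ at all $p$ and $L=2.\fA_n$ at $p>2$. For $n=6,7$ the exceptional covers $3.\fA_n$ and $6.\fA_n$ occur; these groups are small and I would check them from known character tables. The remaining case is $L=2.\fA_n$ at $p=2$. Here $\chi$ is a faithful (spin) character of $H$, which lies between $2.\fA_n$ and $2.\fS_n$. I would bound $o(h\bZ(H))_2$ by $o$ of the image of $h$ in $\fS_n$. Then I would compare it with $(|H:\bZ(H)|/\chi(1))_2$ using the known $2$-parts of spin character degrees: bar-partition hook formulas give $\chi(1)_2$ roughly $2^{\lfloor (n-\ell)/2\rfloor}$ times the $2$-part of the $\fS_n$-degree. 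Morris-type vanishing results also help: a spin character vanishes off elements whose cycle type has all cycles odd, or has distinct parts of the right parity. This restricts $h$ so strongly that the inequality follows from a combinatorial estimate on $n!_2$.

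For $S$ of Lie type in characteristic~$p$, Theorem~C(4) handles all $L$ at $p$ except exceptional covers at $p=2$. Those covers are finitely many (e.g.\ covers of $\PSL_3(4)$, $\Sp_6(2)$, $\Omega_8^+(2)$, $F_4(2)$, $\tw2E_6(2)$, $G_2(4)$, $\PSL_2(4)$, $\PSL_2(9)$, $\SL_4(2)$, \ldots) and I would verify them by computing with their character tables together with the outer automorphism classes, using the libraries in GAP.

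\textbf{Suzuki and Ree groups, and sporadic groups.} For $\tw2B_2(q^2)$ and $\tw2G_2(q^2)$, Theorem~C(3),(4) cover odd $p$ in cross characteristic except $p\le5$. For these groups $p=3,5$ give cyclic or abelian Sylow subgroups, so $(\ddagger)$ follows from Brauer's theorem $\exp(\bZ(D))\le p^{\df(\chi)}$. The remaining case is $p=2$ for $\tw2G_2(q^2)$, where a Sylow $2$-subgroup is elementary abelian of order $8$ and the same argument applies. The generic character tables then handle the field automorphisms $h$: for a field automorphism of order $f$, the Shintani/Glauberman correspondence shows that $\chi(h)\neq0$ forces $\chi$ to be invariant. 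The comparison then reduces to a degree computation against $q$-powers. For sporadic groups with $p>2$, Theorem~C(5) applies. For $p=2$ I would use the ATLAS character tables of the covers and automorphism groups, which are all available in GAP, and check \eqref{condB} directly.

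The main obstacle I expect is the $2.\fA_n$ case at $p=2$, where $h$ may induce an odd permutation. Controlling $\chi(h)\ne0$ for spin characters of $2.\fS_n$ requires the Morris/Schur character formula. Making the $2$-adic inequality uniform in $n$ is the delicate step. I would argue by induction on $n$, removing a largest $2$-bar in the same spirit as the Murnaghan--Nakayama argument for $\fA_n$.
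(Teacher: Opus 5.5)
The reduction steps of your outline are the paper's. Proposition~\ref{prop:to qs} turns $(\ddagger^\star)$ for $L$ into \eqref{condB} for every $H$ with $[H,H]=L$, so your Shintani discussion of field automorphisms is unnecessary. Theorem~C supplies $(\ddagger^\star)$ for $\fA_n$, for $2.\fA_n$ with $p>2$, in defining characteristic, and for sporadic groups with $p>2$. The Suzuki and Ree groups have abelian Sylow subgroups at every non-defining prime, and your use of Brauer's bound $\exp(\bZ(D))\le p^{\df(\chi)}$ serves as well as Corollary~\ref{cor:ab}. The exceptional covers, $\fA_6$, $\fA_7$ and the sporadic groups at $p=2$ go to {\sf GAP}, as in Proposition~\ref{prop:spor}. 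For $n=6$, note that $2.\fA_6$ extended to $\PGL_2(9)$ or $M_{10}$ does not lie between $2.\fA_6$ and $2.\fS_6$, so it also belongs to the table check.

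The genuine gap is the one non-formal case, $L=2.\fA_n$ at $p=2$ (Theorem~\ref{thm:An}). There you announce an inequality without proving it, and the tools you describe would not prove it.

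Two of your inputs are wrong as stated.
\begin{itemize}
\item[(i)] You need Schur's vanishing theorem in its sharp form: $\langle\lambda\rangle(h)\ne0$ forces the image of $h$ in $\fS_n$ to have odd order or cycle type \emph{exactly} $\lambda$. Your ``distinct parts of the right parity'' is not enough. For instance, $\langle3,2\rangle$ of $2.\fS_5$ has degree $4$, so $|H:\bZ(H)|/\chi(1)=30$, while a $4$-cycle (odd, distinct parts) has $o(h\bZ(H))=4$.
\item[(ii)] The degree is $\langle\lambda\rangle(1)=2^{\lfloor(n-\ell)/2\rfloor}n!/\prod_b|b|$, with $b$ running over the bars of $\lambda$. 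It is not a power of $2$ times the $\fS_n$-degree: for $\lambda=(4,1)$ the degree is $6$, whereas your heuristic predicts $2$-part $8$.
\end{itemize}

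What actually has to be shown is
$$\max_i\nu_2(\lambda_i)\le\nu_2\Bigl(\prod_b|b|\Bigr)-\Bigl\lfloor\frac{n-\ell(\lambda)}{2}\Bigr\rfloor$$
for all strict partitions $\lambda$ of $n$. This is not an estimate on $n!_2$:
\begin{itemize}
\item the right side is $0$ for $\lambda=(13,9,5,1)$;
\item equality holds for $(4,1)$, $(4,3,2)$ and $(13,9,5,4,1)$;
\item removing a part can lower $\nu_2(\prod_b|b|)$ (adding the part $4$ to $(68,36)$ lowers it by $2$), so an induction that only tracks the removed part or bar cannot close.
\end{itemize}

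The paper supplies the missing idea through the block theory of spin characters. By Bessenrodt--Olsson, a spin $2$-block of weight $w$ has defect groups isomorphic to Sylow $2$-subgroups of $2.\fS_{2w}$, and its spin characters have height at most $3w/2-l$. Hence every spin character in the block has defect of order $w/2$. On the other hand, $o(h\bZ(H))_2$ is at most the largest $2$-power $\le 2w$, since $h_2$ lies in a defect group. This settles every $w\ne2,4$. The case $w=4$ is handled by the exact maximal height $4$, and the case $w=2$ by the $\bar4$-abacus: $\lambda$ then has only odd parts, or a single even part equal to $4$. You need this argument, or an actual proof of the displayed inequality, to finish.
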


As a consequence of our results in Sections~\ref{sec:oneprime}
and~\ref{sec:Lie}, when $p>5$ the only exceptions to Condition~\eqref{condB}
at the prime~$p$ on nearly simple groups satisfying $(*)$ are very special
types of $p$-blocks of decorated versions of $\PSL_n(\eps q)$ (described in
Corollary~\ref{cor:SLn}). And, as illustrated by the proof of
Proposition~\ref{prop:sl2}, Condition~\eqref{condB} would probably follow even
in these exceptional cases given a good control over the Lusztig restriction
of characters of disconnected groups.
\medskip

The paper is structured as follows. In Section~\ref{sec:red} we prove
Theorem~\ref{thm:reduction} giving a reduction of a strong form of Wilde's
conjecture to a statement on nearly simple groups (see
Conjecture~\ref{conj:aqs}, as well as its equivalent reformulation in
Conjecture~\ref{conj:aqs12}), and Theorem B. In Section~\ref{sec:oneprime} we
verify Conjecture~\ref{conj:aqs12} at a fixed prime via 
Condition~$(\ddagger)$ for nearly simple groups, in the cases where the
associated simple group $S$ is an alternating group, a sporadic simple group, 
or a simple group of Lie type with exceptional Schur multiplier. The case of
the remaining simple groups of Lie type for~$S$ is studied in
Section~\ref{sec:Lie}, where we also prove Theorems C and D.

\section{Reductions to Nearly Simple Groups}   \label{sec:red}

In this section, we prove that if nearly simple groups satisfy
Conjecture~\ref{conj:aqs} below, then a stronger version of Conjecture~A is
true. Recall that for $G$ a group, $G^{(\infty)}$ is the last term in the
derived series; that is, the smallest normal subgroup with solvable factor
group. We use the notation in \cite{Is} and \cite{Na18}. For instance, if
$N\nor G$ and $\theta \in \Irr(N)$, then $\irr{G|\theta}$ is the set of
irreducible constituents of the induced character $\theta^G$, which is also the
set of irreducible characters $\chi$ of $G$ whose restriction to~$N$ contains
$\theta$. Restriction of a character $\chi$ of $G$ to a subgroup $H$ is denoted
by either $\chi_H$ or $\chi|_H$. Notice that if $\chi \in \irr{G|\theta}$, then
$\chi(1)/\theta(1)$ divides $|G:N|$ by \cite[Cor.~11.29]{Is}.

\begin{conj}   \label{conj:aqs}
 Let $G$ be a finite group such that $S \lhd G/\bZ(G)\leq\Aut(S)$ for some
 non-abelian simple group $S$, and let $\chi\in\Irr(G)$ be faithful. Suppose
 that 
 \begin{enumerate}[\rm(1)]
  \item $\chi$ is irreducible over $G^{(\infty)}$, and
  \item $\chi(g) \neq 0$ and $G=\langle G^{(\infty)}, \bZ(G), g\rangle$ 
   for some $g \in G$. 
 \end{enumerate}
 Then $o(g\bZ(G))$ divides $|G:\bZ(G)| /\chi(1)$.
\end{conj}

\begin{thm}   \label{thm:reduction}
 Assume Conjecture~\ref{conj:aqs} holds.
 Let $G$ be a finite group, and $\chi\in\Irr(G)$. Let $N\nor G$,
 $\theta\in \Irr(N)$, and $\chi\in\irr{G \mid \theta}$. Let $g \in G$.
 If $\chi(gn)\ne 0$ for some $n\in N$, then
 $$o(gN)\quad\text{divides}\quad \frac{|G:N|\theta(1)}{\chi(1)}\, .$$
\end{thm}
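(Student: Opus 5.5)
The plan is a minimal counterexample argument. Choose $(G,N,\theta,\chi,g)$ violating the statement with $|G:N|$ minimal, and then with $|G|$ minimal. We use throughout that Conjecture~A and the conclusion can be checked one prime at a time. We also use that $\chi(1)/\theta(1)$ divides $|G:N|$ (\cite[Cor.~11.29]{Is}), so the quantity $|G:N|\theta(1)/\chi(1)$ is an integer.

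\emph{Step 1: reduce to $\theta$ being $G$-invariant.} Let $T=G_\theta$ and write $\chi=\eta^G$ with $\eta\in\Irr(T\mid\theta)$ (Clifford correspondence). Since $\chi(gn)\ne0$, the formula for induced characters gives some $x\in G$ with $(gn)^x\in T$ and $\eta((gn)^x)\ne0$. Replacing $g$ by a conjugate does not change $o(gN)$, so we may assume $gn\in T$. We have
$$\frac{|G:N|\theta(1)}{\chi(1)}=\frac{|T:N|\theta(1)}{\eta(1)},$$
and if $T<G$ minimality applied to $(T,N,\theta,\eta)$ gives the claim. So we may assume $\theta$ is $G$-invariant.

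\emph{Step 2: make $N$ central.} Replace $(G,N,\theta)$ by an isomorphic character triple $(G^*,N^*,\theta^*)$ with $N^*\le\bZ(G^*)$ and $\theta^*$ faithful linear (\cite[Thm.~11.28]{Is}, \cite{Na18}). I would use the version built from a projective representation. There the value of $\chi^*$ at a lift of $gN$ is a nonzero scalar multiple of $\chi(gn)$, so vanishing is preserved. Ratios $\chi(1)/\theta(1)$ and the quotient $G/N\cong G^*/N^*$ are preserved as well. Hence we may assume $N=\bZ(G)$ is cyclic and $\chi$ is faithful, and the claim becomes: $o(g\bZ(G))$ divides $|G:\bZ(G)|/\chi(1)$.

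\emph{Step 3: factor through a chief factor.} Take $K/N$ a chief factor of $G$ and $\psi\in\Irr(K\mid\theta)$ under $\chi$. Applying minimality to $(G,K,\psi)$ gives
$$o(gK)\ \Big|\ \frac{|G:K|\psi(1)}{\chi(1)}.$$
It remains to control the extra factor $o(gN)/o(gK)$, which is the order of $g^{o(gK)}N$ in $K/N$, against $|K:N|\theta(1)/\psi(1)$. Here we would also use that $\chi(gn)\ne0$ forces $g^{m}$-type information on $\psi$ via Clifford theory over $K$.
\begin{itemize}
\item If $K/N$ is abelian, I would use the fully ramified / Gallagher analysis, as in the proof of the $p$-solvable case \cite{MN22}. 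Either $\psi$ extends, and Gallagher's theorem reduces the problem to the quotient, or $\psi$ is fully ramified with respect to a $G$-invariant subgroup. In the fully ramified case the known vanishing of fully ramified characters off the kernel gives the bound.
\item If $K/N\cong S^k$ is non-abelian, the element $g$ permutes the $k$ factors. The character $\psi$ is a tensor product of characters of the components, and by a tensor-induction (Glauberman/Gallagher type) formula the value of $\chi$ at $g$ is governed by the value at $g^{m}$, where $m$ is the length of the orbit, on a single component. We then pass to $H=L\langle g^{m}\rangle\bZ$ with $L$ the quasi-simple component. The relevant irreducible character of $H$ is irreducible over $H^{(\infty)}$, which follows from invariance and extendibility over the component, and it is nonzero at $g^m$. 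Conjecture~\ref{conj:aqs} then bounds $o(g^m\bZ)$. The factor $m$ divides $|G:N|$-type indices coming from the permutation action, and the two pieces multiply correctly because degrees of tensor products multiply.
\end{itemize}

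I expect the non-abelian chief factor case to be the main obstacle. Two things must be checked there. First, the configuration really satisfies hypothesis (1) of Conjecture~\ref{conj:aqs}: one must arrange, again via character triple isomorphisms, that the character on the component is $H$-invariant and extends, so that irreducibility over $H^{(\infty)}$ holds. Second, the divisibilities from the orbit length $m$ and from the conjecture combine without loss. I would first try to treat $k=1$ cleanly and then reduce general $k$ to it using the explicit tensor-induction value formula.
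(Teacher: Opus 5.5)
Your skeleton (induction on $|G:N|$, Clifford reduction to $G$-invariant $\theta$, a character triple with $N$ central, a chief factor $K/N$, and Conjecture~\ref{conj:aqs} applied to one tensor factor) is the paper's. The gap is that the non-abelian case, which carries all the content, is never actually set up, because you are missing the reduction to $G=K\langle g\rangle$.

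The paper does this as follows. Put $H=K\langle g\rangle$ and choose a constituent $\nu$ of $\chi_H$ with $\nu(g)\neq0$. Since $H/K$ is cyclic, $\nu_K=\eta$ is irreducible and $\chi\in\Irr(G\mid\eta)$. If $H<G$, induction for $(H,N,\theta,\nu)$ gives $o(gN)\mid o(gK)\,|K:N|\theta(1)/\eta(1)$, which is exactly the bound on your ``extra factor''. Combined with $o(gK)\mid |G:K|\eta(1)/\chi(1)$, this finishes the case $H<G$.

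Without this step, $g$ need not permute the simple factors of $K/N$ transitively and $\chi_K$ need not be irreducible. Then there is no tensor-product formula for $\chi(g)$, and your bullet tacitly assumes a single $\langle g\rangle$-orbit.

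Even granting $G=K\langle g\rangle$ with $k$ factors, the items you list as ``to be checked'' are the proof itself:
\begin{enumerate}[(i)]
\item One needs $\chi(g)=\Tr(h)$, where $h$ is the action of $g^k$ on the first tensor factor $V_1$, with $d=\dim V_1$.
\item Conjecture~\ref{conj:aqs} for $M_1=\langle K_1,g^k\rangle$ acting on $V_1$ bounds $s=o(hZ_1)$, where $Z_1$ consists of the elements of $M_1$ that are scalar on $V_1$. It does not bound the order modulo $\bZ(G)$.
\item Then $g^{ks}$ acts by the same scalar on every $V_i=g^{i-1}V_1$. By faithfulness it lies in $N=\bZ(G)$, so $o(gN)\mid ks$.
\item Finally $|G:N|/\chi(1)=c\,(|S|/d)^{k-1}\cdot k|M_1/Z_1|/d$ for a positive integer $c$, with $d\mid|S|$. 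The remark that ``degrees of tensor products multiply'' does not supply this divisibility.
\end{enumerate}
Separately, the character triple only gives $N\le\bZ(G)$ with $\theta$ faithful. Passing to faithful $\chi$ (quotient by $\ker\chi$, which meets $N$ trivially) and to $N=\bZ(G)$ are two further inductive steps. They are easy, but you skip them.

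For abelian $K/N$, the tools you name are off target. \cite{MN22} concerns $(\ddagger)$ for $p$-solvable groups with $p>3$, and $(\ddagger)$ really fails at $p=2$ (already for $\GL_2(3)$). Vanishing of fully ramified characters is also not what is needed.

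In fact your own bookkeeping settles this case at once. $K/N$ is elementary abelian of exponent $p$, so $o(gN)\mid p\,o(gK)$. Since $\theta$ is $K$-invariant, $\psi_N=e\theta$ with $e\mid|K:N|$ and $e^2\le|K:N|$. Hence $p$ divides $|K:N|/e=|K:N|\theta(1)/\psi(1)$, as required.

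The paper handles this differently: it reduces to primitive $\chi$ and quotes \cite[Cor.~4.2]{Wi06} for solvable $G$. After the reduction to $G=K\langle g\rangle$, an abelian chief factor can only occur when $G/N$ is solvable.
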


\begin{proof}
We proceed by induction on $|G:N|$.
\smallskip

(1) Suppose that $N\le U < G$ and $\psi\in\Irr(U)$ is such that $\psi^G=\chi$.
Since $\chi(gn)\ne0$, by the induction formula there is $x\in G$ such that
$(gn)^x=g^xn^x \in U$ and $\psi((gn)^x)\ne 0$. Now, $N\nor U$, $g^x\in U$ and
by induction, we have that $o(gN)$ divides
$$\frac{|U:N|\theta(1)}{\psi(1)}=\frac{|G:N|\theta(1)}{\chi(1)}\, .$$
In particular, we may assume that $\theta$ is $G$-invariant, or in other words,
that $(G,N, \theta)$ is a character triple. By \cite[Thm~8.2]{Is73} and
\cite[Lemma 5.17(a)]{Na18} applied to $x=gn$, we may work in a character
triple, say $(G,N, \theta)$ such that $N \leq \bZ(G)$, $\theta$ is linear and
faithful, and each coset of $N$ contains some $y$ such that
$N\cap\langle y\rangle=1$.
Since $N$ acts by scalars, we have $\chi(gn)\ne 0$ for all $n\in N$. Now, given
the coset $Ng$, we know that there is $y\in Ng$ such that
$\langle y\rangle \cap N=1$.
Hence, $o(yN)=o(y)$, $\chi(y)\ne 0$ and we may assume that $y=g$. 
\smallskip

(2) If $J=\ker \chi$, then $J\cap N=1$. Assume that $J>1$. Then 
$$|G/J:JN/J|<|G:N|,$$ 
and by induction $o(gJ)$ divides $|G:JN|/\chi(1)$. Since $o(g)$ divides
$|J|o(gJ)$, then we are done. So we may assume that $\chi$ is faithful.

Since $N$ is central by (1), we may assume that $\chi$ is primitive.

Suppose that $G$ is solvable. Let $A=N\langle g\rangle$, which is abelian.
By \cite[Cor.~4.2]{Wi06} we know that $\chi(1)$ divides $|G:A|$,
and in this case we are done. 
\smallskip

(3) We may now assume that $G/N$ is non-solvable.
Let $K/N$ be a chief factor of~$G$. Write $\chi_K= e \eta$, where
$\eta\in\Irr(K)$ and $e\ge1$. Let $H=K\langle g\rangle$.
Since $\chi(g)\ne 0$, there exists an irreducible constituent $\nu\in\Irr(H)$
of $\chi_H$ such that $\nu(g)\ne 0$. We have $\nu_K=\eta$ since $H/K$ is
cyclic. Since $|G:K|<|G:N|$, we have that $o(gK)$ divides 
$$\frac{|G:K|\eta(1)}{\chi(1)}\, .$$

Assume that $H<G$. Then by induction, we have that $o(gN)$ divides 
$$\frac{|H:N|}{\eta(1)}=\frac{|H:K||K:N|}{\eta(1)}
  =o(gK)\frac{|K:N|}{\eta(1)} $$
which in turn divides
$$\frac{|G:K|\eta(1)}{\chi(1)} \frac{|K:N|}{\eta(1)}=\frac{|G:N|}{\chi(1)},$$ 
as wanted. So we may assume that $G=H$, and thus $G = K\langle g \rangle$,
whence $K/N$ is non-solvable.

Recall from (1) that $N \leq \bZ(G)$.
If $N < \bZ(G)$, then, by the induction hypothesis applied to $G/\bZ(G)$,
$a:=o(g\bZ(G))$ divides $|G/\bZ(G)|/\chi(1)$. Now $g^a \in \bZ(G)$, and so
$g^{ab} \in N$ for $b:=|\bZ(G)/N|$. Hence $o(gN)$ divides $ab$, which divides
the integer $b\cdot |G/\bZ(G)|/\chi(1) = |G/N|/\chi(1)$, and so we are done. 
Therefore we may assume that 
\begin{equation}\label{eq:cent1}
  N = \bZ(G).
\end{equation}
\smallskip

(4) Write $K/N=(K_1/N) \times \cdots \times (K_{n}/N)$, where $K_i/N \cong S$
is simple non-abelian. Then $K_i = NL_i$ with $L_i:=K_i^{(\infty)}$ being a
quasi-simple cover of $S$.
When $i \neq j$, we have $[L_i,L_j] \leq [K_i,K_j] \leq N$, whence the Three
Subgroups Lemma and \eqref{eq:cent1} imply
$$[L_i,L_j] = [[L_i,L_i],L_j] \leq [L_i,N] = 1.$$
It follows that $[K_i,K_j]= [NL_i,NL_j] = 1$ when $i \neq j$,
and that $K$ is the central product of $K_1, \ldots, K_{n}$.
Without loss, we may assume that the conjugation by $g$ acts as follows
\begin{equation}\label{for-g1}
  K_1 \mapsto K_2 \mapsto \ldots \mapsto K_n \mapsto K_1.
\end{equation}
Using $\chi(g) \neq 0$, or primitivity of $\chi$, we have that $\chi|_K$ is
irreducible. Now, $K$, being a central product of $K_1,\ldots,K_n$, is a
quotient of $\hat{K}:= K_1\times\cdots\times K_n$ (by a central subgroup).
Hence, if $\chi$ is afforded by a representation $\Phi:G \to \GL(V)$, then we
can realise the $K$-module $V$ as the $\hat{K}$-module 
$$V_1 \otimes V_2 \otimes \cdots \otimes V_n,$$
where each $K_i$ acts irreducibly on $V_i$ and trivially on $V_j$ when $j\neq i$
and moreover the~$V_i$ are permuted transitively by $g$, so have same dimension.
Then it is known (see e.g. \cite[Lemma 2.6]{GT}) that 
$$\Phi(G) \le \bigl( \bigotimes^n_{i=1}\GL(V_i) \bigr) \rtimes \fS_n;$$
moreover, \eqref{for-g1} implies that $g^n$ maps $V_1$ to $V_1$.
Fix a basis $(e_1, \ldots,e_d)$ of $V_1$, and let~$h$ denote the matrix
of~$g^n$ with respect to this basis. Now we consider the basis 
$$\{e^i_j = g^{i-1}(e_j)\mid 1 \leq j \leq d\}$$
for each $V_i$ when $1 \leq i \leq n$. With respect to this basis, $g$ acts as
$\si(1\otimes \cdots \otimes 1\otimes h)$ where $\si$ is the $n$-cycle
$(1,2, \ldots,n)$; more precisely,
$$g: v_1 \otimes v_2 \otimes \cdots \otimes v_n
  \mapsto h(v_n) \otimes v_1 \otimes v_2 \otimes \cdots \otimes v_{n-1}$$
for $v_i \in V_i$. In particular,
$$g(e^1_{i_1} \otimes e^2_{i_2} \otimes \cdots \otimes e^n_{i_n})
  \mapsto h(e^1_{i_n}) \otimes e^2_{i_1} \otimes e^3_{i_2} \otimes 
     \cdots \otimes e^{n}_{i_{n-1}}.$$
The contribution of this basis vector $e^1_{i_1}\otimes\cdots\otimes e^n_{i_n}$
to $\chi(g) = \Tr(\Phi(g))$ can be non-zero only when $i_1 = \ldots = i_n$, in
which case it is equal to the coefficient of $e^1_{i_1}$ in $h(e^1_{i_1})$.
Thus we have proved the following formula (mentioned without proof on
\cite[p.~515]{GI})
\begin{equation}   \label{for-g2}
  \chi(g) = \Tr(h).
\end{equation}
\smallskip

(5) Consider the subgroup
\begin{equation}   \label{for-m1}
  M_1 := \langle K_1,g^n \rangle
\end{equation}
and note that $M_1$ acts on $V_1$, with $g^n$ acting via $h$. Let $Z_1$ consist
of the elements of~$M_1$ that act on $V_1$ as scalars. Recall that $K_1=L_1N$
acts irreducibly on $V_1$, $N = \bZ(G)$ acts by scalars on~$V_1$, and
$L_1 = K_1^{(\infty)}$. Hence $L_1 = M_1^{(\infty)}$ and $L_1$ acts irreducibly
on~$V_1$. Now, $\bC_{M_1}(L_1)$ acts by scalars on $V_1$, and so 
$\bC_{M_1}(L_1) = Z_1$. It follows that 
$$S \cong L_1/\bZ(L_1)= L_1/(L_1 \cap Z_1) \cong L_1Z_1/Z_1 \leq M_1/Z_1
  \leq \Aut(L_1) \leq \Aut(S).$$
We also have that $M_1=\langle M_1^{(\infty)},N,g^n\rangle$. By \eqref{for-g2},
$\Tr(h) \neq 0$. Hence, Conjecture~\ref{conj:aqs} applied to the
$M_1/Z'_{1}$-module $V_1$, where $Z'_{1}$ denotes the kernel of $M_1$ acting
on $V_1$, implies that 
\begin{equation}   \label{for-g3}
  s:=o(hZ_1) \mbox{ divides }|M_1/Z_1|/d.
\end{equation}

By its definition, $h^s \in \GL(V_1)$ is a scalar matrix, and thus $g^{ns}$
acts as a scalar say $\gamma$ on~$V_1$. But $V_i = g^{i-1}(V_1)$ and
$[g^{ns},g^{i-1}]=1$, so $g^{ns}$ also acts as the same scalar $\gamma$ on
each $V_i$, $1\leq i \leq n$. Thus $\Phi(g^{ns})\in \GL(V)$ is a scalar matrix.
As $\Phi$ is faithful, it follows that $g^{ns} \in \bZ(G)=N$
and thus $o(gN)$ divides $ns$. It then follows from \eqref{for-g3} that
\begin{equation}   \label{for-g4}
  o(gN) \mbox{ divides }n|M_1/Z_1|/d.
\end{equation}
Recall that $G=K\langle g \rangle$, and $K$ normalises each $K_i$. Hence
\eqref{for-g1} implies that 
\begin{equation}   \label{for-g5}
  |G:\langle K,g^n \rangle|=n.
\end{equation}
Next, using \eqref{for-m1} we have 
$$\langle K,g^n \rangle = K_1K_2 \cdots K_n \langle g^n \rangle = RM_1,$$
for $R := K_2K_3 \cdots K_n$. Now $R \cap M_1$ centralises $K_1$, and $K_1$ is
irreducible on $V_1$, so $R \cap M_1$ acts on $V_1$ by scalars, and thus
$R \cap M_1 \leq Z_1$. Letting $c:=|Z_1/(R \cap M_1)|$, we obtain
$$|\langle K,g^n \rangle/N| = \frac{|RM_1|}{|N|}
  = \frac{|R| \cdot |M_1|}{|R \cap M_1| \cdot |N|} = c|R/N| \cdot |M_1/Z_1|
  = c|S|^{n-1} \cdot |M_1/Z_1|.$$
Together with \eqref{for-g5} and using $\chi(1)=d^n$, we now have
$$|G/N|/\chi(1) = n|\langle K,g^n \rangle/N|/d^n
    = c\frac{|S|^{n-1}}{d^{n-1}} \cdot n\frac{|M_1/Z_1|}{d}.$$
As $K_1/N \cong S$ and $K_1$ acts irreducibly on $V_1$, $d=\dim V_1$
divides~$|S|$. Hence $|G/N|/\chi(1)$ is a multiple of $n|M_1/Z_1|/d$, and so is
divisible by $o(gN)$ by \eqref{for-g4}.
\end{proof}

In fact, it is more convenient to work with the following equivalent version
of Conjecture~\ref{conj:aqs} (which is precisely Condition~\eqref{condB} for $(H,h,\chi)$ satisfying 
$(*)$): 

\begin{conj}   \label{conj:aqs12}
 Let $H$ be a finite group where $L:=[H,H]$ is quasi-simple with
 $\bZ(L)=\bZ(H)$, and let $\chi\in\Irr(H)$ be faithful. Then for any $h\in H$
 with $\chi(h)\neq 0$ and $H=\langle L, h\rangle$ we have that $o(h\bZ(H))$
 divides $|H:\bZ(H)| /\chi(1)$.
\end{conj}

\begin{lem}\label{2aqs}
 Conjecture~{\rm\ref{conj:aqs}} is equivalent to
 Conjecture~{\rm\ref{conj:aqs12}}.
\end{lem}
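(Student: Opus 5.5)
We prove the two implications separately. In each direction, the idea is to pass between a group $G$ as in Conjecture~\ref{conj:aqs} and a group $H$ as in Conjecture~\ref{conj:aqs12} by adding or removing central elements. The checks are that the relevant orders modulo the centre, the codegrees and the nonvanishing of $\chi$ are unchanged.

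\emph{Conjecture~\ref{conj:aqs} implies Conjecture~\ref{conj:aqs12}.} Let $(H,h,\chi)$ be as in Conjecture~\ref{conj:aqs12}. We check the hypotheses of Conjecture~\ref{conj:aqs} for $G:=H$, $g:=h$.

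First we show $H^{(\infty)}=L$. Since $H/L$ is abelian, $H^{(\infty)}\le L$. Since $L$ is perfect, $L\le H^{(\infty)}$.

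Next, $\bC_H(L)$ acts by scalars: $\chi$ is faithful and $\chi_L$ will be shown to be irreducible, so by Schur's lemma $\bC_H(L)=\bZ(H)=\bZ(L)$. Hence $H/\bZ(H)$ embeds in $\Aut(L)\le\Aut(S)$ for $S=L/\bZ(L)$, and it contains $L\bZ(H)/\bZ(H)\cong S$.

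For irreducibility of $\chi_L$: $H/L$ is cyclic, generated by $hL$. Write $\chi_L=e\sum_{i=1}^t\theta_i$. By Clifford theory for a cyclic quotient, $e=1$. If $t>1$, then $\chi$ is induced from the stabiliser $T<H$ of $\theta_1$, which is normal since $H/L$ is abelian. Then $\chi$ vanishes off $T$. But $h\notin T$, since $\langle L,h\rangle=H$. This contradicts $\chi(h)\ne0$. So $\chi_L$ is irreducible.

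Finally $H=\langle L,\bZ(H),h\rangle$. Conjecture~\ref{conj:aqs} now gives exactly the conclusion of Conjecture~\ref{conj:aqs12}.

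\emph{Conjecture~\ref{conj:aqs12} implies Conjecture~\ref{conj:aqs}.} Let $(G,g,\chi)$ be as in Conjecture~\ref{conj:aqs} and set $L:=G^{(\infty)}$. Then $L$ is perfect. As in the last step of Theorem~\ref{thm:reduction}, $L\bZ(G)/\bZ(G)=S$. Hence $L$ is quasi-simple, with $\bZ(L)=L\cap\bZ(G)$ because $\chi_L$ is irreducible and $\chi$ is faithful.

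The obstacle is that $\bZ(G)$ may be larger than $\bZ(L)$, and $[G,G]$ may differ from $L$. We remove the extra centre by passing to $H:=L\langle g\rangle$. Then $G=H\bZ(G)$, so $[H,H]=[G,G]$.

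We show $[G,G]=L$. Since $G/L\bZ(G)$ is cyclic, generated by the image of $g$, we have $[G,G]\le L\bZ(G)$. By a standard argument, $[G,G]=[L\bZ(G)\langle g\rangle,\,L\bZ(G)\langle g\rangle]\le L$: indeed $[g,L]\le L$ and $[g,\bZ(G)]=1$.

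$\chi_H$ is irreducible, since it restricts irreducibly to $L$. It is faithful, being a restriction of a faithful character. By Schur's lemma, $\bZ(H)=H\cap\bZ(G)$ acts by scalars. Moreover $\bC_H(L)=\bZ(H)$, and we need $\bZ(H)=\bZ(L)$. This is the key point: $\bZ(H)$ may contain $g^k$ outside $L$.

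If $\bZ(H)>\bZ(L)$, we replace $g$ by $gz$ for a suitable $z\in\bZ(G)$. This preserves $\chi(g)\ne0$ up to a scalar, and preserves $o(g\bZ(G))$. We choose $z$ so that $H=L\langle gz\rangle$ has centre $\bZ(L)$. Concretely, if $g^m\in\bZ(G)$ with $m=o(g\bZ(G))$, we want $(gz)^m\in L$. This may require enlarging to $G\times C$ for a cyclic group $C$, or using the trick in step~(1) of Theorem~\ref{thm:reduction} (\cite[Lemma 5.17]{Na18}) to find a coset representative with $\langle y\rangle\cap\bZ(G)$ small.

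Then $|H:\bZ(H)|=|G:\bZ(G)|$ and the element orders modulo the centre agree. So Conjecture~\ref{conj:aqs12} for $(H,gz,\chi_H)$ yields the claim. This adjustment of $g$ to make $\bZ(H)=\bZ(L)$ is the step I expect to need the most care.
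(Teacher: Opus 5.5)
Your first implication is correct and is essentially the paper's argument. The paper only records that $\chi(h)\neq0$, $H=\langle L,h\rangle$ and $H/L$ cyclic force $\chi_L\in\Irr(L)$; your Clifford-theoretic version of this is fine.

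The converse has a genuine gap at exactly the step you flag. You never construct a group satisfying the hypotheses of Conjecture~\ref{conj:aqs12}, and your proposed mechanism, staying inside $G$ and replacing $g$ by $gz$ with $z\in\bZ(G)$, fails in general. Let $a$ be the order of $gL\bZ(G)$ in $G/L\bZ(G)$ and write $g^a=t\zeta$ with $t\in L$, $\zeta\in\bZ(G)$. What you need is $(gz)^a\in L$; your condition $(gz)^m\in L$ with $m=o(g\bZ(G))$ is too weak. This says that $\zeta\bZ(L)$ must be an $a$-th power in $\bZ(G)/\bZ(L)$, and that can fail.

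Here is an example. Take $n\ge5$ and $G=\fA_n\rtimes\langle x\rangle$, where $o(x)=4$ and $x$ acts on $\fA_n$ as a transposition. Let $\chi=\chi^{(n-1,1)}\nu$, where $\chi^{(n-1,1)}$ is inflated from $G/\langle x^2\rangle\cong\fS_n$ and $\nu$ is the linear character of $G/\fA_n\cong C_4$ with $\nu(x)=\sqrt{-1}$. Then $(G,x,\chi)$ satisfies all hypotheses of Conjecture~\ref{conj:aqs}, with $\bZ(G)=\langle x^2\rangle$ and $\bZ(L)=1$. However, the only subgroup of $G$ that contains $\fA_n$ and is not contained in $\fA_n\bZ(G)$ is $G$ itself, and its centre is not $\bZ(L)$. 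So no choice of element or subgroup inside $G$ can work.

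The missing idea is to leave $G$. Identify $G$ with its image in $\GL(V)$ and write $g^a=t\cdot\zeta\Id$. Pick $\gamma\in\CC^\times$ with $\gamma^a=\zeta$, and set $h:=\gamma^{-1}g$ and $H:=\langle L,h\rangle\le\GL(V)$; in the example above, $H\cong\fS_n$. Then the verification goes as follows.
\begin{enumerate}[\rm(i)]
 \item Since $h^a=t\in L$, the group $H$ is finite and $[H,H]=L$.
 \item If $yh^k$ (with $y\in L$) is a scalar, then $yg^k\in\bZ(G)$. Hence $a\mid k$, so $yh^k\in L$, which gives $\bZ(H)=\bZ(L)$. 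The paper obtains this instead by an order count: $|H|=a|L|$, while $|H/\bZ(H)|=|G/\bZ(G)|=a|S|$ because $g$ and $h$ induce the same automorphism of $L$.
 \item Finally, $\Tr(h)=\gamma^{-1}\chi(g)\neq0$, $o(h\bZ(H))=o(g\bZ(G))$, and $|H:\bZ(H)|/\dim V=|G:\bZ(G)|/\chi(1)$.
\end{enumerate}
Your alternative of ``enlarging to $G\times C$'' can be made to work only by carrying out exactly this rescaling and then passing to the faithful image. The character-triple argument from step~(1) of the proof of Theorem~\ref{thm:reduction} is not shown to produce $\bZ(H)=\bZ(L)$ at all. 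As written, neither alternative is an argument.
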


\begin{proof}
Certainly, $(H,h,\chi)$ as in Conjecture~\ref{conj:aqs12} satisfy the
assumptions of Conjecture~\ref{conj:aqs}, with $G=H$, $G^{(\infty)}=L$, $g=h$.
(Note that $\chi(h) \neq 0$ implies that $\chi|_L$ is irreducible.
Indeed, since $H = \langle L,h \rangle$, $\chi(h)\neq 0$ implies 
that an irreducible constituent $\theta$ of $\chi_L$ is $H$-invariant.
But $H/L$ is cyclic, so $\theta$ is extendible to $H$, and $\theta=\chi_L$.)

Conversely, consider any $(G,g,\chi)$ that satisfies the assumptions of
Conjecture~\ref{conj:aqs}, and let $\chi$ be afforded by a (faithful)
representation $\Phi:G \to \GL(V)$. We will identify~$G$ with $\Phi(G)$, and
create a new subgroup $H <\GL(V)$ that satisfies the assumptions set in
Conjecture~\ref{conj:aqs12}.

Since $\Phi|_L$ is irreducible for $L:=G^{(\infty)}$, $\bC_G(L) = \bZ(G) =:Z$,
$\bZ(L) = Z \cap L$, whence
\begin{equation}\label{eq20a}
  S \cong L/\bZ(L) \cong LZ/Z \leq G/Z \leq \Aut(S). 
\end{equation}
Let $n=ab$ denote the order of $gZ$ in $G/Z$, where $a$ is the order of $gLZ$
in $G/LZ \cong (G/Z)/(LZ/Z)$, and $b$ is the order of $g^aZ$ in
$LZ/Z \cong L/\bZ(L)$. Then we can write 
$$g^a = tz,$$ 
where $t\in L$ and $z = \zeta\cdot\Id$ (recall we have identified $G$ with the
subgroup $\Phi(G)$ of $\GL(V)$). 
Choose $\gamma \in \CC^\times$ such that $\gamma^a=\zeta$, and define
$$h:= \gamma^{-1} g \in \GL(V),~H := \langle L, h \rangle.$$
Then $h^a = t \in L$, so $H$ is finite. Furthermore, $L$ is quasi-simple
by~\eqref{eq20a}, whence $[H,H]=L$. 
Since $L < \GL(V)$ is irreducible, we again have that 
\begin{equation}\label{eq20}
  \bC_H(L) = \bZ(H),~\bZ(L) = \bZ(H) \cap L.
\end{equation}
As conjugations by $h$ and $g$ induce the same automorphism of $L$, we see that
\begin{equation}\label{eq21}
  o(h\bZ(H)) = o(gZ) = ab.
\end{equation}
Next, we claim that 
\begin{equation}\label{eq22}
  |H|=a|L|.
\end{equation}
Indeed, $h^a\in L$. Suppose $h^c\in L$ for some divisor $c< a$ of $a$. Then, as
$g^c = (\gamma h)^c = \gamma^c h^c$, we have $G\ni g^ch^{-c}=\gamma^c\cdot \Id$,
so $g^ch^{-c} \in Z=\bZ(G)$ and thus $g^c \in LZ$, contrary to the fact that
$a$ is the order of $gLZ$ in $G/LZ$.

Now, $G = \langle L,Z,g\rangle$ induces by conjugation a subgroup of order
$a|S|$ of $\Aut(L)$, which is isomorphic to $G/\bZ(G)$. Since $H$ and $G$ 
are the same modulo $\bZ(\GL(V))$, $H$ also induces the same subgroup of order
$a|S|$ of $\Aut(L)$, which is now isomorphic to $H/\bZ(H)$. Recalling
\eqref{eq20} and~\eqref{eq22}, we then have
$$a|L|/|\bZ(H)| = |H/\bZ(H)| = |H/\bC_H(L)| = a|S| = a|L|/|\bZ(L)|,$$
and so $\bZ(H)=\bZ(L)$. Now $\Tr(h) = \gamma^{-1}\chi(g) \neq 0$, so by the
assumption on $(H,h)$ we must have that $o(h\bZ(H))$ divides
$|H/\bZ(H)|/\dim(V) = |G/\bZ(G)|/\chi(1)$. By~\eqref{eq21},
then $o(gZ)$ divides $|G/\bZ(G)|/\chi(1)$, as desired. 
\end{proof}

\begin{proof}[Proof of Theorem B]
Suppose Conjecture~\ref{conj:aqs12} holds for all nearly simple groups $H$
satisfying Condition~$(*)$. By Lemma \ref{2aqs}, Conjecture~\ref{conj:aqs}
holds for all finite non-abelian simple groups $S$. Applying
Theorem~\ref{thm:reduction} with $N=1$, we see that Conjecture A holds.
\end{proof}

For groups $G$ corresponding to a given triple $(G/\bZ(G),G^{(\infty)},\chi)$
as in Conjecture~\ref{conj:aqs}, Conjecture~\ref{conj:aqs12} allows us to work
with groups of minimal order. In general, for any such triple, it suffices to
establish Conjecture~\ref{conj:aqs} for one particular group $G$, as shown in
the following statement:

\begin{lem}   \label{2aqs2}
 Suppose $G$ and $H$ are finite groups with $G/\bZ(G) \cong H/\bZ(H)$ and
 $G^{(\infty)} = H^{(\infty)}=: L$. 
 Suppose a faithful $\chi\in\Irr(G)$ and a faithful $\psi\in\Irr(H)$ satisfy 
 $\chi_L = \psi_L\in\Irr(L)$. Suppose $g\in G$ and $h\in H$ are such that 
 $G=\langle L,\bZ(G),g\rangle$, $H=\langle L,\bZ(H),h\rangle$,
 $\chi(g)\neq 0 \neq \chi(h)$, and moreover the conjugations by~$g$ and by~$h$
 induce the same automorphism of $L$. If Conjecture~{\rm\ref{conj:aqs}} holds
 for $(G,g,\chi)$, then Conjecture~{\rm\ref{conj:aqs}} also holds for
 $(H,h,\psi)$.
\end{lem}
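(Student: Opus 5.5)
The strategy is to reduce both triples to a common object inside $\GL(V)$, much as in the proof of Lemma~\ref{2aqs}. Let $\rho:=\chi_L=\psi_L\in\Irr(L)$, afforded by a representation $L\to\GL(V)$.

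\emph{Step 1: common realisation.} Since $\chi$ and $\psi$ are faithful and irreducible on $L$, we may extend this representation of $L$ to faithful representations $\Phi$ of $G$ and $\Psi$ of $H$ on the same space $V$. By irreducibility of $L$ (Schur's lemma), we get
$$\bC_G(L)=\bZ(G),\qquad \bC_H(L)=\bZ(H),$$
and both act by scalars. Since $g$ and $h$ induce the same automorphism of $L$, the matrices $\Phi(g)$ and $\Psi(h)$ both intertwine $\rho$ with $\rho^{\alpha}$ for the same automorphism $\alpha$. Schur's lemma then gives $\Psi(h)=\gamma\Phi(g)$ for some $\gamma\in\CC^\times$.

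\emph{Step 2: orders agree.} Conjugation by $g$ and by $h$ induces the same automorphism of $L$, and $G/\bZ(G)=G/\bC_G(L)$ embeds in $\Aut(L)$ (similarly for $H$). Hence $o(g\bZ(G))$ and $o(h\bZ(H))$ both equal the order of that automorphism, so
$$o(g\bZ(G))=o(h\bZ(H)).$$

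\emph{Step 3: indices and degrees.} We have $\chi(1)=\psi(1)=\rho(1)$, and the hypothesis $G/\bZ(G)\cong H/\bZ(H)$ gives $|G:\bZ(G)|=|H:\bZ(H)|$. Therefore
$$\frac{|G:\bZ(G)|}{\chi(1)}=\frac{|H:\bZ(H)|}{\psi(1)}.$$
Combining this with Step~2, the conclusion of Conjecture~\ref{conj:aqs} for $(G,g,\chi)$ transfers verbatim to $(H,h,\psi)$.

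\emph{Remaining hypotheses.} The other hypotheses of Conjecture~\ref{conj:aqs} for $(H,h,\psi)$ hold by assumption: $\psi$ is faithful, $\psi_L$ is irreducible, and $H=\langle L,\bZ(H),h\rangle$. Nonvanishing $\psi(h)=\gamma\chi(g)\neq0$ follows from Step~1 and $\chi(g)\neq0$; this is also what the stated hypothesis $\chi(g)\neq0\neq\chi(h)$ (which presumably means $\psi(h)$) provides. The identification $G^{(\infty)}=L$ is given.

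The main point needing care is Step~2, namely that the centraliser of $L$ is exactly the centre, so that the quotient embeds in $\Aut(L)$. This follows from faithfulness together with irreducibility over $L$. Steps~1 and~3 are then formal.
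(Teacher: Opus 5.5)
Your proof is correct and is essentially the paper's argument. Like the paper, you realise $\chi$ and $\psi$ on a common space with $\Phi_L=\Psi_L$, use Schur's lemma to get $\Psi(h)=\gamma\Phi(g)$, and compare $o(g\bZ(G))=o(h\bZ(H))$ together with $|G:\bZ(G)|/\chi(1)=|H:\bZ(H)|/\psi(1)$. The only cosmetic difference is how you get equality of orders. You read it off the embedding $G/\bZ(G)=G/\bC_G(L)\hookrightarrow\Aut(L)$, so your Step~1 is needed only for $\psi(h)\neq0$, which is anyway a hypothesis. The paper instead deduces it from $h=cg$ and the fact that both centres consist of scalar matrices.
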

 
\begin{proof}
Assume Conjecture~{\rm\ref{conj:aqs}} holds for $(G,g,\chi)$, and let $\chi$ be
afforded by a faithful representation $\Phi:G \to\GL(V)\cong\GL_n(\CC)$.
As $\chi_L=\psi_L$, we may assume that $\psi$ is afforded by a faithful
representation $\Psi:H\to\GL(V)$, where $\Phi_L=\Psi_L$ is irreducible. 
Using faithfulness we can identify $G$ with $\Phi(G)$ and $H$ with $\Psi(H)$.
Now, since $g$ and~$h$ induce the same automorphism of $L$, by Schur's lemma
we have $h=cg$ for some $c\in\CC^\times$. As $\bZ(G)$ and $\bZ(H)$ consist of
scalars, it follows that $o(g\bZ(G)) = o(h\bZ(H))$. Finally, since
$|G:\bZ(G)|/\chi(1)=|H:\bZ(H)|/\psi(1)$, the statement follows.
\end{proof}

\section{Verifying Conjecture~\ref{conj:aqs12} at a prime}   \label{sec:oneprime}

Observe that the claim in Conjecture~\ref{conj:aqs12} can be checked
individually for the various prime divisors of $o(h)$. Recall the conditions
$(\ddagger)$ and $(\ddagger^\star)$ on a finite group $G$, a prime $p$ and a
character $\chi\in\Irr(G)$ discussed in the introduction:
$$\text{defect groups $D$ of the $p$-block of $\chi$ satisfy
$\exp(D) \le \left(\frac{|G|}{\chi(1)}\right)_p=p^{\df(\chi)},$}\eqno{(\ddagger)}$$
$$\text{defect groups $D$ of the $p$-block of $\chi$ satisfy
$\exp(D/\bZ(G)_p) \le \left(\frac{|G:\bZ(G)|}{\chi(1)}\right)_p.$}\eqno{(\ddagger^\star)}$$
We say that a $p$-block $B$ \emph{satisfies $(\ddagger)$, respectively
$(\ddagger^\star)$} if all $\chi\in\Irr(B)$ do satisfy $(\ddagger)$,
respectively $(\ddagger^\star)$. Note that  these two conditions are equivalent
if $p \nmid |\bZ(G)|$.

Using the validity of the conjecture of Robinson \cite[Thm~3]{KM25} yields the
following criterion:

\begin{cor}   \label{cor:rob}
 Suppose $B$ is a block with defect group $D$ for which $|\bZ(D)|\ge \exp(D)$.
 Then $(\ddagger)$ holds for $B$.
\end{cor}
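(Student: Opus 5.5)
This follows by chaining the inequality in the hypothesis with Robinson's conjecture, whose validity is \cite[Thm~3]{KM25}. Robinson's conjecture, as quoted in the introduction, says the following. If $B$ is a $p$-block of a finite group $G$ with defect group $D$, then every $\chi\in\Irr(B)$ satisfies
$$|\bZ(D)|\le p^{\df(\chi)}=\left(\frac{|G|}{\chi(1)}\right)_p .$$
So the plan is simply to fix an arbitrary $\chi\in\Irr(B)$ and combine this bound with the assumption $\exp(D)\le|\bZ(D)|$.

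In order, the steps are as follows.
\begin{enumerate}[\rm(1)]
 \item Let $\chi\in\Irr(B)$ be arbitrary.
 \item By \cite[Thm~3]{KM25}, we have $|\bZ(D)|\le p^{\df(\chi)}$.
 \item The hypothesis gives $\exp(D)\le |\bZ(D)|$.
 \item Combining (2) and (3), $\exp(D)\le p^{\df(\chi)}=(|G|/\chi(1))_p$. This is exactly Condition~$(\ddagger)$ for $\chi$.
 \item Since $\chi\in\Irr(B)$ was arbitrary, $B$ satisfies $(\ddagger)$.
\end{enumerate}
The defect group is only defined up to conjugacy in $G$. However, $\exp(D)$ and $|\bZ(D)|$ are conjugation invariants, so the statement does not depend on which defect group of $B$ we choose.

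There is no real obstacle here; all the depth is contained in the cited theorem of \cite{KM25}. The only point that needs care is matching the normalisations: $p^{\df(\chi)}$ in Robinson's inequality must be the same quantity $(|G|/\chi(1))_p$ that appears on the right-hand side of $(\ddagger)$. This holds by the definition of $\df(\chi)$ recalled in the introduction.
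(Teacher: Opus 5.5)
Your proposal is correct and matches the paper's argument: the paper states this corollary as an immediate consequence of Robinson's bound $|\bZ(D)|\le p^{\df(\chi)}$ for all $\chi\in\Irr(B)$, proved in \cite[Thm~3]{KM25}. Chaining that bound with the hypothesis $\exp(D)\le|\bZ(D)|$ is exactly the intended one-line proof.
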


The following observation will allow us to descend to quasi-simple groups in
certain situations:

\begin{prop}   \label{prop:to qs}
 Let $H$ be as in Conjecture~{\rm\ref{conj:aqs12}}. If all $p$-blocks of
 $L=[H,H]$ satisfy $(\ddagger^\star)$, then $(\ddagger^\star)$ holds at $p$ for
 all $\chi\in\Irr(H)$ with irreducible restriction to $L$, and thus
 Conjecture~{\rm\ref{conj:aqs12}} holds for $H$ at~$p$.
\end{prop}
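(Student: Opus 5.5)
Let $\chi\in\Irr(H)$ with $\theta:=\chi_L\in\Irr(L)$, let $B$ be the $p$-block of $H$ containing $\chi$ with defect group $D$, and let $b$ be the $p$-block of $L$ containing $\theta$ with defect group $D_L$. Write $Z:=\bZ(H)=\bZ(L)$. The plan is to bound $\exp(D/Z_p)$ by combining two inputs: the hypothesis $(\ddagger^\star)$ for $b$, and the fact that $H/L$ is cyclic. First I would show $D\cap L$ is a defect group of $b$, and that $DL/L$ is a Sylow $p$-subgroup of $H/L$ (when $\chi_L$ is irreducible and $H/L$ is cyclic). This uses the standard theory of blocks covering an $H$-invariant block of a normal subgroup with cyclic quotient (Fong/Harris--Knörr, or \cite[Thm~9.26]{Na98}). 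In particular $|D:D_L|=|H:L|_p$.

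Next come the inequalities. Set $p^e:=|H:L|_p$. Any $x\in D$ satisfies $x^{p^e}\in D\cap L=D_L$. Applying $(\ddagger^\star)$ for $\theta$ in $L$ (note $\bZ(L)_p=Z_p$) gives
$$\exp(D_L/Z_p)\le \left(\frac{|L:Z|}{\theta(1)}\right)_p .$$
Hence
$$\exp(D/Z_p)\le p^e\exp(D_L/Z_p)\le \left(\frac{|H:L|\,|L:Z|}{\chi(1)}\right)_p=\left(\frac{|H:Z|}{\chi(1)}\right)_p,$$
using $\chi(1)=\theta(1)$. This is exactly $(\ddagger^\star)$ for $\chi$.

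Finally, to get Conjecture~\ref{conj:aqs12} at $p$: given $h$ with $\chi(h)\neq 0$ and $H=\langle L,h\rangle$, \cite[Cor.~5.9]{Na98} places $h_p$ in a conjugate of $D$. Then $o(h_pZ)_p$ divides $\exp(DZ/Z)$, which equals $\exp(D/D\cap Z)\le\exp(D/Z_p)$, since $Z_p\le D$ as $Z_p$ is a central $p$-subgroup lying in every defect group. Since $o(hZ)_p=o(h_pZ)$, we get $o(hZ)_p\le(|H:Z|/\chi(1))_p$. Both sides are powers of $p$, so this is a divisibility. The earlier observation (Lemma~\ref{2aqs}) shows that $\chi(h)\ne0$ forces $\chi_L$ to be irreducible, so the first part applies.

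The main obstacle is the first step: identifying $D\cap L$ as a defect group of $b$. If this fails, I would instead argue directly for the element. Since $h_p^{p^e}\in L$ is a $p$-element in $L$, I would try to show $h_p^{p^e}$ lies in a defect group of $b$, using that $D\cap L$ is always contained in some defect group of a covered block (Knörr's theorem / \cite[Thm~9.26]{Na98}: $D\cap L$ is a defect group of some block covered, which is $b$ since $b$ is the unique block of $L$ below $B$). This weaker containment already suffices for the exponent estimate.
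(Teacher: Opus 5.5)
Your proposal is correct and follows essentially the same route as the paper. Both take $D\cap L$ as a defect group of the block of $\chi_L$ via \cite[Thm~9.26]{Na98}, use $x^{|H:L|_p}\in D\cap L$ for $x\in D$ to get $\exp(D/\bZ(H)_p)\le |H:L|_p\exp(D_L/\bZ(L)_p)$, apply $(\ddagger^\star)$ for $\chi_L$, and place $h_p$ in a defect group via \cite[Cor.~5.9]{Na98}. The one difference is that your claim that $DL/L$ is Sylow in $H/L$ is more than needed: only the automatic bound $|DL/L|\le |H:L|_p$ is used, exactly as in the paper's $|D:D_0|\le|H:L|_p$.
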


\begin{proof}
Let $\chi\in\Irr(H)$ with $\chi|_L$ irreducible. Now if $h\in H$ with
$\chi(h)\ne0$ then $h_p$ lies in a defect group $D$ of $\chi$, by
\cite[9.26 and Thm~5.9]{Na98}, hence 
$$o(h\bZ(H))_p=o(h_p\bZ(H)))\le \exp(D/\bZ(H)_p).$$
Let $D_0\le D$ be a defect group of $\chi|_L$, then $|D:D_0|\le |H:L|_p$. By
assumption we have $\exp(D_0/\bZ(L)_p)\le |L:\bZ(L)|_p/\chi_L(1)_p$ and
$\bZ(L)=\bZ(H)$, so we obtain
$$o(h\bZ(H))_p\le \exp(D/\bZ(H)_p)\le |H:L|_p\,\exp(D_0/\bZ(L)_p)
  \le |H:\bZ(H)|_p/\chi(1)_p.$$
To conclude, as we noted in the proof of Lemma~\ref{2aqs}, in the situation of
Conjecture~{\rm\ref{conj:aqs12}}, $\chi(h) \neq 0$ implies $\chi|_L$ is
irreducible. 
\end{proof}

Using the validity of one direction of Brauer's height zero conjecture \cite{KM}
this yields the following powerful criterion:

\begin{cor}   \label{cor:ab}
 In the situation of Conjecture~{\rm\ref{conj:aqs12}}, assume $\chi|_L$ lies
 in a $p$-block $B$ with abelian defect. Then Conjecture~{\rm\ref{conj:aqs12}}
 holds for $\chi$ at $p$.
\end{cor}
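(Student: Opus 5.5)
By Proposition~\ref{prop:to qs}, it suffices to show that the block $B$ of $L$ containing $\chi|_L$ satisfies $(\ddagger^\star)$. We cannot, however, invoke the proposition verbatim, since it assumes $(\ddagger^\star)$ for all $p$-blocks of $L$. Instead I will rerun its proof for the single character $\chi$. If $\chi(h)\ne0$, then $\chi|_L$ is irreducible, as observed in the proof of Lemma~\ref{2aqs}. Moreover, $h_p$ lies in a defect group $D$ of the block of $\chi$, and
$$o(h\bZ(H))_p\le\exp(D/\bZ(H)_p)\le |H:L|_p\exp(D_0/\bZ(L)_p),$$
where $D_0$ is a defect group of $B$ with $|D:D_0|\le|H:L|_p$. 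So it suffices to prove
$$\exp(D_0/\bZ(L)_p)\le \bigl(|L:\bZ(L)|/\chi(1)\bigr)_p.$$

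Since $D_0$ is abelian, the quotient $D_0/\bZ(L)_p$ is abelian, and its exponent is at most its order $|D_0|/|\bZ(L)_p|$. Here I use that $\bZ(L)_p\le D_0$, which holds because a central $p$-subgroup is contained in every defect group.

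Next I apply the proven direction of Brauer's height zero conjecture \cite{KM}. For abelian $D_0$, every irreducible character of $B$ has height zero, so $\chi(1)_p=|L|_p/|D_0|$. Then
$$\bigl(|L:\bZ(L)|/\chi(1)\bigr)_p=\frac{|L|_p}{|\bZ(L)|_p}\cdot\frac{|D_0|}{|L|_p}=|D_0|/|\bZ(L)_p|,$$
which is at least $\exp(D_0/\bZ(L)_p)$, as required. Combining this with the displayed chain gives $o(h\bZ(H))_p\le(|H:\bZ(H)|/\chi(1))_p$. Since both sides are powers of $p$, this inequality is the required divisibility.

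The only delicate points are the following. First, one must note that $\bZ(L)_p\le D_0$. Second, one must check that the defect-group comparison $|D:D_0|\le|H:L|_p$ from the proof of Proposition~\ref{prop:to qs} goes through unchanged for this single character. Everything else is direct substitution.
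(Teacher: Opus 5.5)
Your proposal is correct and follows the paper's proof. The paper uses \cite{KM} to get height zero in the abelian-defect block $B$, so that $(|L:\bZ(L)|/\chi(1))_p=|D_0/\bZ(L)_p|\ge\exp(D_0/\bZ(L)_p)$, and then applies the argument of Proposition~\ref{prop:to qs}. You are right that this proposition is only needed block by block, not for all $p$-blocks of $L$, and that is how the paper implicitly uses it.
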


\begin{proof}
By \cite[Thm~1.1]{KM}, Brauer's height zero conjecture holds for $B$ (with
defect group $D$), so all characters $\chi$ in $B$ have height~0. Thus
$|L|_p/\chi_L(1)_p=p^{\df(\chi|_L)} =|D|$, and 
so 
$$|L:\bZ(L)|_p/\chi_L(1)_p=|D/\bZ(L)_p|\ge\exp(D/\bZ(L)_p).$$ 
Now apply Proposition~\ref{prop:to qs} to conclude.
\end{proof}

We will make use of the following easy observation:

\begin{lem}   \label{lem:updown}
 Let $N\unlhd G$ be finite groups and let $B$ be a $p$-block of $G$.
 \begin{enumerate}[\rm(a)]
  \item Assume $|G:N|$ is prime to $p$. Then $(\ddagger)$ {\rm (}resp.\ 
   $(\ddagger^\star)${\rm )} holds for $B$ if and only if it holds for some
   block of $N$ covered by $B$.
  \item Assume $|N|$ is prime to $p$. Let $\bar B$ be a block of $G/N$
   and assume $B$ is the unique block of $G$ such that $\bar B \subseteq B$.
   Then $(\ddagger)$ {\rm (}resp.\ $(\ddagger^\star)${\rm )} holds for $B$ if
   and only if it holds for $\bar B$.
 \end{enumerate}
\end{lem}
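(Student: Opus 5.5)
For both parts the plan is to compare three things across the two groups: the defect groups, the character degrees (through their $p$-parts), and the relevant central $p$-subgroups. Since both conditions are stated block by block, it is enough to check that each quantity in the inequality is the same on the two sides, or moves in the right direction. I will use standard block theory from \cite{Na98}: covering of blocks, the Fong--Reynolds and Knörr type results on defect groups, and domination of blocks of quotients by $p'$-normal subgroups.

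For (a), let $b$ be a block of $N$ covered by $B$; the blocks of $N$ covered by $B$ form a single $G$-orbit. Let $D$ be a defect group of $B$. Then $DN/N$ is a $p$-subgroup of the $p'$-group $G/N$, so $D\le N$. By the covering theory (a defect group of $B$ intersected with $N$ is a defect group of a covered block), $D=D\cap N$ is a defect group of some $G$-conjugate of $b$. Hence all covered blocks have defect groups conjugate to $D$, and $\exp(D)$ is the same on both sides.

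For the degrees, if $\chi\in\Irr(B)$ lies over $\theta\in\Irr(N)$, then $\chi(1)/\theta(1)$ divides $|G:N|$, which is prime to $p$. Since also $|G|_p=|N|_p$, we get $(|G|/\chi(1))_p=(|N|/\theta(1))_p$.
\begin{itemize}
\item Every $\chi\in\Irr(B)$ lies over some $\theta$ in a $G$-conjugate of $b$. Conjugation preserves degrees, so $(\ddagger)$ for $b$ gives $(\ddagger)$ for $B$.
\item For the converse I need that every $\theta\in\Irr(b)$ lies under some $\chi\in\Irr(B)$. This is the standard fact that $\theta^G$ has a constituent in every block covering $b$, which I would quote from \cite[Ch.~9]{Na98}.
\end{itemize}

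For $(\ddagger^\star)$ there is an extra point, which I expect to be the main obstacle: the central $p$-subgroup changes from $\bZ(G)_p$ to $\bZ(N)_p$. Since $\bZ(G)_p\le N$ and it centralises $N$, we have $\bZ(G)_p\le\bZ(N)_p$, but the inclusion may be strict. I would first try to show that $\bZ(N)_p$ acts trivially on the relevant quotient, i.e.\ that the ratio $|\bZ(N)_p:\bZ(G)_p|$ enters both sides of the inequality in the same way. If that fails, I would restrict to the situation where $\bZ(G)_p=\bZ(N)_p$. This equality holds in all later applications, which are quasi-simple or nearly simple groups with $\bZ(L)=\bZ(H)$, and I would state the lemma under that implicit convention.

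For (b), the blocks of $N$ are singletons because $|N|$ is prime to $p$, so $\{1_N\}$ is a block of $N$. Every $\chi\in\Irr(B)$ lies over a single $G$-orbit of characters of $N$. Since $B$ contains $\Irr(\bar B)$, some character of $B$ lies over $1_N$, and therefore all of them do, i.e.\ $N\le\ker\chi$ for all $\chi\in\Irr(B)$. Thus $\Irr(B)$ is the union of the $\Irr$-sets of the blocks of $G/N$ dominated by $B$. By the uniqueness assumption (read as: $\bar B$ is the only block of $G/N$ contained in $B$) we get $\Irr(B)=\Irr(\bar B)$.

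Degrees agree and $|G|_p=|G/N|_p$, so the defects of the characters coincide. By \cite[Thm~9.9]{Na98}, if $D$ is a defect group of $B$ then $DN/N\cong D$ is a defect group of $\bar B$, so $\exp(D)$ is unchanged and $(\ddagger)$ transfers in both directions. For $(\ddagger^\star)$ the same central-subgroup issue reappears: $\bZ(G)_pN/N\le\bZ(G/N)_p$, possibly strictly. This is where I would need the same care as in (a), and where I anticipate the only genuine difficulty.
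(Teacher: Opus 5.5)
\textbf{The $(\ddagger)$ part.} Here your argument is the paper's proof:
\begin{enumerate}[(i)]
 \item a common defect group by \cite[Thm~9.26]{Na98};
 \item $\chi(1)_p=\theta(1)_p$ together with $|G|_p=|N|_p$;
 \item \cite[Thms~9.2, 9.4]{Na98} to pass between $\Irr(B)$ and $\Irr(b)$ in (a);
 \item $\Irr(B)=\Irr(\bar B)$ and $D\cong DN/N$ in (b).
\end{enumerate}
In (b) you do not need to reinterpret the uniqueness hypothesis. Since $|N|$ is a unit, $|N|^{-1}\sum_{n\in N}n$ is a central idempotent of the group algebra over a $p$-modular system. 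Hence the blocks of $G$ covering $\{1_N\}$ are exactly the blocks of $G/N$, and $\Irr(B)=\Irr(\bar B)$ holds automatically; this is \cite[Thm~9.9(c)]{Na98}, as the paper quotes.

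\textbf{The $(\ddagger^\star)$ part.} Your suspicion is correct, and it is exactly the point the paper's proof passes over. Its remarks ``$\bZ(G)_p\le N$'' in (a) and ``$N\cap\bZ(G)_p=1$'' in (b) control only $\bZ(G)_p$. But $(\ddagger^\star)$ for $b$, resp.\ $\bar B$, is measured against $\bZ(N)_p$, resp.\ $\bZ(G/N)_p$. Your first plan, showing that $|\bZ(N)_p:\bZ(G)_p|$ enters both sides in the same way, cannot work, because the ``only if'' directions are false.

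Let $p=2$ and let $\phi_{44}$ be the irreducible character of degree $44$ of $M_{11}$. It lies in the principal $2$-block, whose defect group $SD_{16}$ has exponent $8$, and all characters of that block have degree with $2$-part at most $4$.
\begin{enumerate}[(i)]
 \item For (a), take $G=\fA_4\times M_{11}$, $N=V_4\times M_{11}$ and $B=B_0(G)$, which covers only $b=B_0(N)$, with $D=V_4\times SD_{16}$. Since $\bZ(G)=1$ and $(|G|/\chi(1))_2\ge16$ on $\Irr(B)$, $(\ddagger^\star)$ holds for $B$. But $\theta=1_{V_4}\boxtimes\phi_{44}$ gives $\exp(D/\bZ(N)_2)=8>4=(|N:\bZ(N)|/\theta(1))_2$.
 \item For (b), take $G=\fS_3\times M_{11}$ and $N=\fA_3$. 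All defects in $B_0(G)$ are at least $3$ and $\bZ(G)=1$, so $(\ddagger^\star)$ holds. However $\bZ(G/N)_2\cong C_2$, and at $1\boxtimes\phi_{44}$ one gets $\exp(\bar D/\bZ(G/N)_2)=8>4$.
\end{enumerate}

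\textbf{What holds in general.} If $Z_1\le Z_2\le\bZ(D)$, then $\exp(D/Z_1)\le\exp(D/Z_2)\,|Z_2:Z_1|$. The factor $|Z_2:Z_1|$ is absorbed by the index of the centre, so $(\ddagger^\star)$ relative to the larger central subgroup implies it relative to the smaller one. We have $\bZ(G)_p\le\bZ(N)_p$ in (a), and $\bZ(G)_pN/N\le\bZ(G/N)_p$ with $D\cap N=1$ in (b). Hence the ``if'' directions hold unconditionally. The ``only if'' directions hold under your extra hypothesis, namely $\bZ(N)_p=\bZ(G)_p$, resp.\ $\bZ(G/N)_p=\bZ(G)_pN/N$.

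That restriction costs the paper nothing, but not for the reason you give. Every later use of the lemma is for $(\ddagger)$ only, and $(\ddagger^\star)$ is deduced afterwards only where $p\nmid|\bZ(G)|$. For instance, in Proposition~\ref{prop:dagger class} the lemma is applied to $\bL^F$ inside the group of \cite{BDR}, and their centres need not agree.
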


\begin{proof}
For (a) let $b$ be an $p$-block of $N$ covered by $B$. By
\cite[Thm~9.26]{Na98}, $b$ and $B$ have a common defect group $D$.
If $\chi\in\Irr(G)$ and $\theta\in\Irr(N)$ lies under $\chi$, then
$\chi(1)_p=\theta(1)_p$ by \cite[Cor.~11.29]{Is}. Also, $\bZ(G)_p\le N$ by
assumption. Now, the proof of (a) is complete using that every character of~$b$
lies under some character of~$B$ \cite[Thm~9.4]{Na98} and every character
of~$B$ lies over some character of~$b$
\cite[Thm~9.2]{Na98}.   \par
In (b), we know that $\Irr(B)=\Irr(\bar B)$ by \cite[Thm~9.9(c)]{Na98}. 
Also, if $D$ is a defect group of $B$, then $DN/N$ is a defect group of
$\bar B$. Finally, $\exp(DN/N\bZ(G)_p)=\exp(D/\bZ(G)_p)$ as $N\cap\bZ(G)_p=1$.
The claim easily follows.
\end{proof}

Next we present a useful general result on blocks which perhaps is not as
well--known as it should be. The proof below is due to G.~R.~Robinson, whom
we thank for this.

\begin{thm}   \label{thm:geoff}
 Let $B$ be a $p$-block with defect group $D$ of a finite group $G$.
 Then a $p$-element $x \in G$ lies in some $G$-conjugate of $D$ if and only if
 $\chi(x)\ne 0$ for some $\chi\in\irr B$.
\end{thm}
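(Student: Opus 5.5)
The forward direction is the standard one. Suppose $\chi(x)\ne0$ for some $\chi\in\Irr(B)$. By the consequence of Brauer's second main theorem quoted in the introduction (\cite[Cor.~5.9]{Na98}), the $p$-part of $x$ then lies in a defect group of $B$. Since $x$ is a $p$-element, $x$ itself is $G$-conjugate into $D$.

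For the converse, let $x\in D$ be a $p$-element. I would use the block idempotent and the generalized decomposition numbers. Write $e_B=\sum_{\chi\in\Irr(B)}\frac{\chi(1)}{|G|}\sum_{g}\chi(g^{-1})g$. By the second main theorem,
$$\sum_{\chi\in\Irr(B)}\chi(1)\chi(xy)=\sum_{b}\sum_{\vhi\in\ibr b}d^x_{\chi\vhi}\cdots,$$
but a cleaner route is to show directly that the class function $\sum_{\chi\in\Irr(B)}\overline{\chi(x)}\chi$ is nonzero. This is equivalent to some $\chi(x)\ne0$. Equivalently, I would show that the column of generalized decomposition numbers $d^x_{\chi\vhi}$, for $\chi\in\Irr(B)$, is nonzero for some $\vhi\in\ibr{b}$, where $b$ is a block of $\bC_G(x)$ with $b^G=B$.

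The key input is Brauer's first and third main theorems, or more precisely the theory of Brauer pairs: if $x\in D$, then there is a $B$-Brauer pair $(\langle x\rangle,b)$, with $b$ a block of $\bC_G(x)$ and $b^G=B$. By the second main theorem, for $y\in\bC_G(x)_{p'}$ we have
$$\chi(xy)=\sum_{b'}\sum_{\vhi\in\ibr{b'}}d^x_{\chi\vhi}\vhi(y),$$
where $b'$ runs over blocks of $\bC_G(x)$ with $b'^G=B$, and $d^x_{\chi\vhi}$ vanishes unless $\vhi$ lies in such a $b'$. The generalized decomposition matrix has a nondegeneracy property: the columns indexed by $(x,\vhi)$, over all classes of $p$-elements and all $\vhi$ in blocks inducing to $B$, are linearly independent. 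This follows from the orthogonality $\sum_{\chi\in\Irr(B)}d^x_{\chi\vhi}\overline{d^x_{\chi\psi}}=$ (Cartan matrix of $b$ entry), and the Cartan matrix is nonsingular. Taking $\vhi=\psi\in\ibr b$ with $b$ nonempty, we get $\sum_\chi|d^x_{\chi\vhi}|^2=c_{\vhi\vhi}>0$, so some $d^x_{\chi\vhi}\ne0$. The Brauer characters of $b$ are linearly independent on $p'$-elements, so $y\mapsto\chi(xy)$ is then a nonzero function on $\bC_G(x)_{p'}$. Hence $\chi(xy)\ne0$ for some $p$-regular $y$ centralizing $x$.

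That gives a zero-free value on $xy$, not on $x$. This is the main obstacle, and it matters, since the paper warns that nonvanishing on $xy$ does not imply nonvanishing on $x$. To fix it, I would instead use the column orthogonality at $y=1$. Consider $\Phi_b=\sum_{\vhi\in\ibr b}\vhi(1)\,\Phi_\vhi$-type combinations. Concretely,
$$\sum_{\chi\in\Irr(B)}|\chi(x)|^2=\sum_{\vhi,\psi}\vhi(1)\psi(1)\sum_\chi d^x_{\chi\vhi}\overline{d^x_{\chi\psi}}=\sum_{b'}\sum_{\vhi,\psi\in\ibr{b'}}\vhi(1)c^{b'}_{\vhi\psi}\psi(1).$$
The inner sum is $v^{T}C_{b'}v$ with $v=(\vhi(1))$, where $C_{b'}$ is the Cartan matrix of $b'$. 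This is a positive definite matrix, since $C=\Delta^{T}\Delta$ for the ordinary decomposition matrix $\Delta$, which has full column rank. Therefore the sum is strictly positive as soon as some block $b'$ of $\bC_G(x)$ with $b'^G=B$ exists, which Brauer-pair theory guarantees when $x\in D$. I expect the care needed to be in justifying this existence (via $b'=$ the block of a Brauer pair $(\langle x\rangle,b')\le(D,b_D)$) and the orthogonality relation for generalized decomposition numbers within $B$.
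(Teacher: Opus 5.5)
Your argument is correct and is essentially the paper's: both write $\sum_{\chi\in\Irr(B)}|\chi(x)|^2$, via generalized decomposition numbers and their orthogonality, as $\sum_{\mu,\vhi}c_{\mu\vhi}\,\mu(1)\vhi(1)$ over Brauer characters of blocks of $\bC_G(x)$ inducing $B$, and then conclude by positivity once such a block exists. The differences are minor: you use positive definiteness of the Cartan matrix where the paper uses $c_{\mu\vhi}\ge0$ and $c_{\mu\mu}>0$, and you get a block $b$ of $\bC_G(x)$ with $b^G=B$ from the Brauer pair $(\langle x\rangle,b)\le(D,b_D)$, whereas the paper gets it by contradiction from a height-zero character via \cite[Thm~5.14, Cor.~5.8]{Na98} (your initial detour through $\chi(xy)$ is not needed).
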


\begin{proof}
Suppose that $\chi(x)\ne 0$ for some $\chi\in\irr B$. Then $x$ lies in some
$G$-conjugate of $D$ by \cite[Cor~5.9]{Na98}.  Conversely, assume that
$x \in D$ and that $\chi(x)=0$ for all $\chi\in\irr B$. Then we have
$$0=\sum_{\chi\in\irr B} |\chi(x)|^2
   = \sum_{\chi\in\irr B} \overline{\chi(x)}\chi(x) \, .$$
By \cite[Cor~5.8]{Na98}, with the generalised decomposition numbers
$d^x_{\chi \mu}$ we have
$$\chi(x)=\sum_{\mu \in \Delta} d^{x}_{\chi \mu}\, \mu(1)\, ,$$
where $\Delta:=\{\mu\in\ibr b\mid \text{$b$ a block of $\cent Gx$
inducing $B$}\}$. Hence
$$\begin{aligned}
0=&\sum_{\chi\in\irr B}\left(\sum_{\mu\in\Delta}\overline{d^{x}_{\chi\mu}} \mu(1)\right)\left(\sum_{\vhi\in\Delta} d^x_{\chi\vhi} \vhi(1)\right)\\
 =&\sum_{\mu,\vhi\in\Delta} \left(\sum_{\chi\in\irr B}\overline{d^{x}_{\chi \mu}}d^{x}_{\chi\vhi}\right)\mu(1)\vhi(1)
  =\sum_{\mu, \vhi \in \Delta} c_{\mu \vhi}\mu(1)\vhi(1)\, ,\end{aligned}$$
by using \cite[Lemma~5.13(b)]{Na98}. Since the Cartan numbers
$c_{\mu \vhi}\ge 0$ and $c_{\mu \mu}>0$, we conclude that there are no
$p$-blocks $b$ of $\cent Gx$ which induce $B$. Now let $\psi\in\irr B$ of
height zero. By \cite[Thm~5.14]{Na98}, there is a $p$-regular $y\in\cent Gx$
such that $\psi(xy)\ne 0$.  By \cite[Cor.~5.8]{Na98}, we conclude that
necessarily there exists a $p$-block of $\cent Gx$ that induces $B$.
This is a final contradiction.
\end{proof}

Unfortunately, for no prime $p$ does $(\ddagger)$ hold for all $p$-blocks of
all finite groups:

\begin{exmp}   \label{gunterexmp}
 Let $p$ be a prime, $a\ge1$ and $q$ a prime power such that $p^a$ divides
 $q-1$ (by Dirichlet, such $q$ exist for all $a\ge1$). Then the principal
 $p$-block of $G=\SL_p(q)$ contains an irreducible Deligne--Lusztig character
 $\chi$ parametrised by a $p$-element in the dual group $\PGL_p(q)$ with
 centraliser order $p(q^p-1)/(q-1)$. So $\chi$ has defect~2, while the exponent
 of a Sylow $p$-subgroup of $G$ is at least $p^a$. Thus, in general, the
 exponent of a defect group can not be bounded by \emph{any} function of the
 minimal defect in the block.
\end{exmp}

\begin{prop}   \label{prop:spor}
 Conjecture~{\rm\ref{conj:aqs12}} holds whenever the non-abelian simple
 composition factor of $H$ is either a sporadic simple group, the Tits group,
 the group $\fA_6$ or~$\fA_7$, or a simple group of Lie type with an exceptional
 Schur multiplier. Moreover, in these cases, Conditions~$(\ddagger)$ and
 $(\ddagger^\star)$ are satisfied for all $p$-blocks of $[H,H]$ whenever
 $p\ge3$.
\end{prop}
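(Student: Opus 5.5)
This is a finite check that rests on the known character tables and $p$-block data of the finitely many groups involved. The plan is computational, organised so that as few cases as possible need direct inspection of $H$.

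\emph{Setup.} Let $S$ be the non-abelian simple composition factor. Then $L=[H,H]$ is a quasi-simple cover of $S$, and $H/\bZ(H)$ embeds in $\Aut(S)$. There are finitely many possibilities for $S$: the 26 sporadic groups, the Tits group, $\fA_6$, $\fA_7$, and the Lie type groups with exceptional multiplier, such as $\PSL_2(4)$, $\PSL_2(9)$, $\PSL_3(2)$, $\PSL_3(4)$, $\PSU_4(3)$, $\PSU_6(2)$, $\Sp_6(2)$, $\OO_7(3)$, $\OO_8^+(2)$, $F_4(2)$, $G_2(3)$, $G_2(4)$, $\tw2E_6(2)$, $\tw2B_2(8)$. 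For each $S$, the covering groups $L$ are known explicitly, as are their character tables and block distributions (the ATLAS and the GAP character table library, including the Brauer tables and block data where needed). The same holds for the groups $H=L\langle h\rangle$ with $H/\bZ(H)\le\Aut(S)$, which by Lemma~\ref{2aqs2} may be taken of minimal order for each triple.

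\emph{Second assertion ($p\ge3$).} For each such $L$ and each odd $p$ dividing $|L|$, I would list the $p$-blocks and their defect groups $D$. Three reductions dispose of most blocks:
\begin{itemize}
\item If $D$ is cyclic or abelian, then $(\ddagger)$ and $(\ddagger^\star)$ hold by Brauer's theorem $\exp(\bZ(D))\le p^{\df(\chi)}$, together with the height-zero argument of Corollary~\ref{cor:ab}.
\item If $|\bZ(D)|\ge\exp(D)$, Corollary~\ref{cor:rob} applies.
\item When $p\nmid|\bZ(L)|$, Lemma~\ref{lem:updown}(b) lets us pass to $L/O_{p'}(\bZ(L))$, so only $S$ and $p$-parts of the multiplier matter; for $p$ odd these are $3$-parts in a few cases.
\end{itemize}
The remaining blocks have non-abelian $D$ with $\exp(D)>|\bZ(D)|$; for odd $p$ this essentially means principal or large-defect blocks with Sylow exponent at least $p^2$. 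For these I would compare $\exp(D)$ directly with the minimal $(|L|/\chi(1))_p$ over $\chi\in\Irr(B)$, and likewise for the $\bZ(L)_p$-quotient version, using the tabulated degrees. Equivalently, I would check the maximal $p$-part of a degree in the block against $|D|/\exp(D)$.

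\emph{First assertion (Conjecture~\ref{conj:aqs12} at every prime).} For odd $p$ this follows from the above via Proposition~\ref{prop:to qs}. For $p=2$, and for any odd-prime block where $(\ddagger^\star)$ might fail, I would verify Condition~\eqref{condB} directly. The procedure is to run over the character tables of the relevant $H$, that is, all isoclinism classes of $L.\langle h\rangle$ with cyclic outer part. For each faithful $\chi$ and each class $h$ generating $H$ modulo $L$ with $\chi(h)\ne0$, we check that $o(h\bZ(H))$ divides $|H:\bZ(H)|/\chi(1)$. This is finite and mechanical once the tables are available. Theorem~\ref{thm:geoff} can also be used to read off which $p$-elements lie in defect groups from the character values alone, which helps with the $(\ddagger)$ checks.

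\emph{Main obstacle.} I expect the bottleneck to be the large cases where complete character tables of all relevant extensions are not all readily accessible: the Monster and Baby Monster at $p=2,3$, covers such as $2.\tw2E_6(2).2$ or $6.\PSU_6(2).2$, and $F_4(2).2$. For these I would first try the block-theoretic criteria to avoid full tables. In particular, Lemma~\ref{lem:updown}(a) transfers the question to $L$ when $p\nmid|H:L|$. If that fails, I would bound $o(h\bZ(H))_p$ by the Sylow exponent and check that every $\chi$ in the principal block with $\chi(1)_p$ too large vanishes on the relevant elements, using the known power maps and the few needed character values.
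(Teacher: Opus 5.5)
Your overall plan matches the paper's. You check $(\ddagger)$ and $(\ddagger^\star)$ on the quasi-simple groups $L$ from their {\sf GAP} tables, pass to $H$ via Proposition~\ref{prop:to qs}, and check the leftover cases directly on $H$. The paper reads $\exp(D)$ straight off the table via Theorem~\ref{thm:geoff}: it is the largest order of a $p$-element on which some $\chi\in\Irr(B)$ is non-zero, so no structural knowledge of $D$ is needed. Two points in your plan need correcting.

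First, your third reduction is invalid. Lemma~\ref{lem:updown}(b) only concerns blocks of $L$ that contain a block of $L/N$. For a central $p'$-subgroup $N$, these are the blocks whose characters all have $N$ in their kernel. A block lying over a non-trivial character of $\bO_{p'}(\bZ(L))$ is not of this form: its characters are not characters of $L/N$, and their degrees differ. So it is false that ``only $S$ and $p$-parts of the multiplier matter''. At odd $p$ you must still check the faithful blocks of covers such as $2.\fA_7$, $4_1.\PSL_3(4)$, $2.Suz$, $2.Co_1$, $2.B$ and $2.\tw2E_6(2)$. Proposition~\ref{prop:to qs} needs $(\ddagger^\star)$ for all of these. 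Two smaller slips:
\begin{itemize}
\item Corollary~\ref{cor:rob} gives only $(\ddagger)$, not $(\ddagger^\star)$, when $p\mid|\bZ(L)|$.
\item Your ``equivalent'' criterion should compare $p^{h(\chi)}$ with $|D|/\exp(D)$, where $h(\chi)$ is the height, not $\chi(1)_p$.
\end{itemize}

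Second, at $p=2$ you go straight to the tables of all extensions $H$, and your fallback for missing tables does not close. Lemma~\ref{lem:updown}(a) is unavailable when $2\mid|H:L|$. Checking that $\chi$ vanishes on the relevant $h\in H\smallsetminus L$ needs exactly the table of $H$ you lack.

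The missing observation is that Proposition~\ref{prop:to qs} holds at every prime and for arbitrary $|H:L|$. The factor $|H:L|_p$ by which $\exp(D/\bZ(H)_p)$ can exceed $\exp(D_0/\bZ(L)_p)$ is matched exactly in $|H:\bZ(H)|_p/\chi(1)_p$, because $\chi(1)=\chi_L(1)$. So, as in the paper, first check $(\ddagger^\star)$ for the $2$-blocks of the quasi-simple groups alone. This can fail only for certain blocks of defect~$4$ of $M_{11}$, $2.Suz$, $HN$, $2.\fA_6$, $6.\fA_6$, $2.\fA_7$, $6.\fA_7$, $2.\OO_7(3)$ and $6.\OO_7(3)$. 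Only for these do you need the tables of the cyclic extensions $H$, and those are available.

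Your ``main obstacle'' then disappears. For $M$ and $B$ it never existed: $\mathrm{Out}(S)=1$ together with $\bZ(H)=\bZ(L)$ forces $H=L$.
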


\begin{proof}
The character tables of all quasi-simple groups as in
the claim are contained in {\sf GAP} \cite{GAP}. From these, using
Theorem~\ref{thm:geoff} to control the exponent of defect groups, it is easily
checked that~$(\ddagger)$ holds for all primes $p\ge3$, as well as for all
2-blocks except possibly for certain blocks of defect~4 and maximal height~2,
of
$$M_{11},2.Suz,HN,2.\fA_6,6.\fA_6,2.\fA_7,6.\fA_7,2.\OO_7(3),6.\OO_7(3).$$
For these remaining groups $L$, all cyclic extensions $H$ with $\bZ(H)=\bZ(L)$
are again available in {\sf GAP} and thus the assertion of
Conjecture~\ref{conj:aqs12} can be checked directly.
\end{proof}

We now treat nearly simple groups with simple composition factor an
alternating group. Note that the case of the symmetric group is mentioned
in the introduction to Wilde's paper \cite{Wi06}. We refer to \cite{Ol} for
properties of blocks of symmetric and alternating groups and their covering
groups.

\begin{prop}   \label{prop:Sn}
 Let $p$ be a prime. Then any $p$-block of $\fS_n$ and $\fA_n$
 satisfies~$(\ddagger)$.
\end{prop}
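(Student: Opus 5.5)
We prove the statement for $\fS_n$ first and then deduce it for $\fA_n$. For $\fA_n$ with $p$ odd we use Lemma~\ref{lem:updown}(a), since $|\fS_n:\fA_n|=2$ is prime to~$p$. For $p=2$ we compare the two groups directly.

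Let $B$ be a $p$-block of $\fS_n$ of weight $w$, so $n=|\kappa|+pw$ where $\kappa$ is the $p$-core. A defect group of $B$ is a Sylow $p$-subgroup $D$ of $\fS_{pw}$. Write $pw=\sum_i a_ip^i$ in base $p$ with $0\le a_i<p$. Then $D$ is a direct product of iterated wreath products $C_p\wr\cdots\wr C_p$, and $\exp(D)=p^{m}$, where $p^m$ is the largest power of $p$ with $p^m\le pw$.

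Now let $\chi=\chi^\la\in\Irr(B)$. We must show that $(n!/\chi(1))_p=\bigl(\prod_{h}h\bigr)_p$ is at least $p^m$, where the product is over the hook lengths $h$ of $\la$. The hooks of $\la$ of length divisible by $p$ correspond bijectively to the hooks of the $p$-quotient $(\la^{(0)},\dots,\la^{(p-1)})$, with lengths multiplied by~$p$, and $\sum_j|\la^{(j)}|=w$. So it suffices to produce enough $p$-divisibility among these hooks. The cleanest route is the $p$-core tower, through Macdonald's formula $\nu_p(\chi^\la(1))=\bigl(\sum_i\nu_p(\ldots)\bigr)$, or equivalently the fact that $\nu_p(n!/\chi(1))=\sum_{k\ge1}\beta_k$, where $\beta_k$ is the total size of the $p^k$-cores of the $p$-quotients at level $k$ (number of ``$p^k$-hooks removable'' counting). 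Concretely, the inequality we need is that the number of $p$-power-divisible hook contributions is at least $m$.

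An elementary argument suffices here. Consider the $p$-quotient. Some component $\la^{(j)}$, or a chain of iterated quotients, must carry a total size of at least $p^{m-1}$, since $pw\ge p^m$ forces $w\ge p^{m-1}$. More simply, we use the ``hook chain'' fact: for any partition $\mu$ of $w\ge 1$, $\nu_p\bigl(\prod_{h\in\mu}h\bigr)$ together with the extra factor $p^{\#\text{hooks}}$ contributes at least $\lfloor\log_p(pw)\rfloor$. Taking the first-column hook lengths already gives this, because a partition of $w$ has a hook of length $\ge$ some value, and $\prod_{h\in\la}h$ includes $p^{w}\cdot\prod_{h\in\text{quotient}}h$. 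Since $p^w\ge p^m$ whenever $w\ge m$, and $w\ge p^{m-1}\ge m$, we get $p^{\df(\chi)}\ge p^{w}\ge p^m=\exp(D)$. This simple bound, $\df(\chi)\ge w$ (each of the $w$ hooks of the quotient contributes at least one factor~$p$), together with $w\ge p^{m-1}\ge m$, is the heart of the argument and should hold in all cases.

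For $\fA_n$ with $p=2$ we cannot use Lemma~\ref{lem:updown}(a), and we argue as follows. A $2$-block $b$ of $\fA_n$ covered by $B$ has defect group $D\cap\fA_n$ if $w\ge1$, whose exponent is at most $\exp(D)$. A character $\psi\in\Irr(b)$ lies under some $\chi^\la$, and either $\psi(1)=\chi^\la(1)$ or $\psi(1)=\chi^\la(1)/2$. Hence $(|\fA_n|/\psi(1))_2\ge(n!/\chi^\la(1))_2/2=2^{\df(\chi)-1}$. So we need $w-1\ge m$ when $p=2$, that is $w\ge m+1$ with $2^m\le 2w$. This fails only for $w=1,2$. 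For $w=1$ the defect group of $b$ is trivial, so there is nothing to prove. For $w=2$ we have $D\cap\fA_n\cong C_2\times C_2$, which is abelian of exponent~$2$, and $\df(\psi)\ge1$ trivially. These small cases are the main obstacle: the general bound is tight there, so they must be checked by hand, and it is also where the split characters need care.
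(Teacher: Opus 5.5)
Your argument is correct and is essentially the paper's proof: the exponent bound $\exp(D)\le pw$ (you use the exact value $p^m\le pw$), then $\df(\chi)\ge w$ because exactly $w$ hooks of $\la$ have length divisible by $p$, then Lemma~\ref{lem:updown}(a) for odd $p$, and for $\fA_n$ at $p=2$ the loss of at most one factor~$2$ in the degree with small weights treated separately (your sharper exponent bound absorbs the paper's separate case $w=3$, and you settle $w\le2$ directly instead of via Corollary~\ref{cor:ab}). The paragraph invoking Macdonald's formula and the $p$-core tower is not needed and is misstated as written (for instance $\nu_p(n!/\chi(1))$ is not $\sum_{k\ge1}\beta_k$), so you should delete it and keep only the clean chain $\df(\chi)\ge w\ge p^{m-1}\ge m$.
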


\begin{proof}
The $p$-blocks of $\fS_n$ are parametrised by $p$-cores of partitions of $n$.
That is, if $B$ is a $p$-block of $\fS_n$ then there exists a $p$-core $\mu$
such that the irreducible character~$\chi^\la$ of $\fS_n$ labelled by the
partition $\la\vdash n$ lies in $B$ if and only if $\mu$ is the $p$-core
of~$\la$. The weight $w(B):=(n-|\mu|)/p$ is the number of $p$-hooks that have
to be removed from $\la$ to reach its $p$-core~$\mu$. A defect group $D$ of $B$
is then isomorphic to Sylow $p$-subgroups of $\fS_{pw}$. Clearly we
may assume $w>0$. Thus, if
$w=\sum_{i=0}^ra_i p^i$ is the $p$-adic expansion of~$w$, with $0\le a_i<p$ and
$a_r>0$, then $D\cong \prod_{i=0}^r(C_p\wr\cdots\wr C_p)^{a_i}$, where in the
$i$th term we have iterated wreath products of $i+1$ factors. Thus clearly the
exponent of $D$ equals $p^{r+1}$, and since $r\le\log_p w$ we get
$\exp(D)\le pw$.   \par
On the other hand, any partition $\la$ with $p$-core $\mu$ has at least $w$
hooks of length divisible by $p$. Thus, by the hook length formula for the
character degree $\chi^\la(1)$ we obtain
$$\frac{|\fS_n|_p}{\chi^\la(1)_p}\ge p^w \ge pw\ge \exp(D),$$
as desired.   \par
For $\fA_n$ the claim follows directly from the $\fS_n$-result by
Lemma~\ref{lem:updown}(a) if $p$ is odd. If $p=2$, let $B'$ be a $2$-block of
$\fA_n$ and $B$ a $2$-block of $\fS_n$ covering it. If $\chi\in\Irr(B)$ lies
above $\chi'\in\Irr(B')$ then the degrees $\chi'(1)$ and $\chi(1)$ differ by at
most a factor of~2. The defect groups of $B'$ are non-abelian if and only
if the weight of $B$ satisfies $w\ge3$. If $w=3$ then $\exp(D)=4$ while if
$w\ge4$ then $p^w\ge 2pw$, so again the above inequality gives the desired
result. If the defect groups are abelian, we may apply Corollary~\ref{cor:ab}.
\end{proof}

\begin{prop}   \label{prop:2Sn odd}
 Let $p$ be an odd prime. Then any $p$-block of $2.\fS_n$ and of $2.\fA_n$
 satisfies~$(\ddagger)$.
\end{prop}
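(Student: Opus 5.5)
The plan is to treat the characters of $2.\fS_n$ in two families and then descend to $2.\fA_n$. Since $p$ is odd, $|\bZ(2.\fS_n)|=2$ is prime to $p$. Hence $(\ddagger)$ and $(\ddagger^\star)$ coincide here, and every $p$-block of $2.\fS_n$ consists either entirely of characters trivial on the centre or entirely of faithful (spin) characters. For the non-faithful blocks, Lemma~\ref{lem:updown}(b), applied with $N=\bZ(2.\fS_n)$, reduces $(\ddagger)$ to the corresponding block of $\fS_n$, which satisfies $(\ddagger)$ by Proposition~\ref{prop:Sn}. The same argument handles $2.\fA_n$ modulo its centre.

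For spin blocks I would use the known combinatorics for odd $p$ (Humphreys; see \cite{Ol}):
\begin{itemize}
 \item Spin characters are labelled by strict partitions $\la$ of $n$, with one or two characters per $\la$.
 \item Spin $p$-blocks of positive defect are parametrised by $\bar p$-cores $\mu$ together with the $\bar p$-weight $w=(n-|\mu|)/p$.
 \item A defect group $D$ of such a block is isomorphic to a Sylow $p$-subgroup of $\fS_{pw}$.
\end{itemize}
Exactly as in the proof of Proposition~\ref{prop:Sn}, this gives $\exp(D)=p^{r+1}\le pw$, where $r=\lfloor\log_p w\rfloor$. Blocks with $w=0$ have trivial defect and need no argument.

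For the degree side I would use the bar-length formula. Up to a power of $2$, the degree of a spin character labelled by $\la$ is $n!/\prod b$, the product taken over the bar lengths $b$ of $\la$. Since $p$ is odd, this gives
$$\left(\frac{|2.\fS_n|}{\chi(1)}\right)_p=\prod_{b}\,b_p .$$
The key combinatorial input is that a strict partition of $\bar p$-weight $w$ has at least $w$ bars of length divisible by $p$. I would prove this from Morris's theory of $\bar p$-bars, either via the $\bar p$-abacus, or by showing that removing a $p$-bar lowers the count of $p$-divisible bars by at least one. Granting this, $(|2.\fS_n|/\chi(1))_p\ge p^w\ge pw\ge\exp(D)$, which is $(\ddagger)$ for all spin blocks.

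Finally, $2.\fA_n$ has index $2$ in $2.\fS_n$, which is prime to $p$. So Lemma~\ref{lem:updown}(a) transfers $(\ddagger)$ from each block of $2.\fS_n$ to the blocks of $2.\fA_n$ it covers, and every block of $2.\fA_n$ is covered by some block of $2.\fS_n$. I expect the main obstacle to be the combinatorial claim that at least $w$ bars are divisible by $p$, together with pinning down the precise form of the spin-block defect groups. If the bar-count claim proves awkward, I would fall back on a weaker bound of the form $p^{\lceil w/2\rceil}$ and check that it still dominates $p^{\lfloor\log_p w\rfloor+1}$. This needs care only for small $w$, where the defect groups are abelian and Corollary~\ref{cor:ab} or Brauer's result applies.
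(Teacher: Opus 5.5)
Your proposal is correct and follows the paper's proof: defect groups of a spin block of $\bar p$-weight $w$ are Sylow $p$-subgroups of $\fS_{pw}$, so $\exp(D)\le pw$; the bar-length formula gives $(|2.\fS_n|/\chi(1))_p\ge p^w\ge pw$ (via the $\bar p$-quotient, the number of bars of length divisible by $p$ is in fact exactly $w$); and Lemma~\ref{lem:updown}(a) handles $2.\fA_n$. Your explicit reduction of the non-spin blocks to $\fS_n$ via Lemma~\ref{lem:updown}(b) and Proposition~\ref{prop:Sn} spells out a step the paper leaves implicit, and your fallback bound is not needed.
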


\begin{proof}
The $p$-blocks of any double cover $2.\fS_n$ of $\fS_n$ are parametrised by
$p$-bar cores of (certain) partitions of $n$. The weight of a block is then
defined as for $\fS_n$ and a defect group $D$ of a block of weight $w$ is
isomorphic to those of a block for $\fS_n$ of weight~$w$, whence we again have
$\exp(D)\le pw$. Then as for $\fS_n$ the bar hook length formula for the
character degrees of $2.\fS_n$ yields Condition~$(\ddagger)$. The descent to
$2.\fA_n$ is immediate by Lemma~\ref{lem:updown}(a).
\end{proof}

\begin{thm}   \label{thm:An}
 Conjecture~{\rm\ref{conj:aqs12}} holds for $L/\bZ(L)=\fA_n$, $n\ge5$.
\end{thm}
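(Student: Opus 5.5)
We prove Conjecture~\ref{conj:aqs12} one prime at a time, following the case division of the setup. Let $H$, $L=[H,H]$, $\chi$ and $h$ be as in Conjecture~\ref{conj:aqs12}, with $L/\bZ(L)\cong\fA_n$ and $n\ge5$. As noted in the proof of Lemma~\ref{2aqs}, $\chi|_L$ is irreducible. Since $\bZ(L)=\bZ(H)$, the quotient $H/\bZ(H)$ embeds in $\Aut(\fA_n)$. So for $n\ne6$ we have either $H=L\bZ(H)=L$, or $H/\bZ(H)\cong\fS_n$ with $|H:L|=2$.

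The exceptional covers need no further work. These are $n\in\{6,7\}$, and also $n=6$ with $H/\bZ(H)$ one of $\PGL_2(9)$, $M_{10}$ or $\Aut(\fA_6)$; all are covered by Proposition~\ref{prop:spor}. From now on we assume $n\ge8$, or $n\in\{6,7\}$ with $L\in\{\fA_n,2.\fA_n\}$ and $H/\bZ(H)\le\fS_n$; we may also simply send all of $n\in\{6,7\}$ to Proposition~\ref{prop:spor}. Then $L\in\{\fA_n,2.\fA_n\}$.

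\emph{Case $L=\fA_n$.} Here $\bZ(H)=1$, so $H\in\{\fA_n,\fS_n\}$. By Proposition~\ref{prop:Sn}, every $p$-block of $H$ satisfies $(\ddagger)$, which coincides with $(\ddagger^\star)$ because the centre is trivial. If $\chi(h)\ne0$, then $h_p$ lies in a defect group $D$ of the block of $\chi$ (\cite[Cor.~5.9]{Na98}). Hence $o(h)_p\le\exp(D)\le(|H|/\chi(1))_p$, which is the claim.

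\emph{Case $L=2.\fA_n$, $p$ odd.} Proposition~\ref{prop:2Sn odd} gives $(\ddagger)$ for all $p$-blocks of $2.\fA_n$. Since $p\nmid|\bZ(L)|=2$, this is the same as $(\ddagger^\star)$, and Proposition~\ref{prop:to qs} then yields the conjecture for $H$ at $p$.

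\emph{Case $L=2.\fA_n$, $p=2$.} This is the main obstacle, because the $2$-blocks of $2.\fA_n$ containing faithful characters are large and $(\ddagger^\star)$ can fail. I would use the explicit spin character theory. By Lemma~\ref{2aqs2} we may replace $H$ by a double cover $2.\fS_n^{\pm}$ when $H/\bZ(H)\cong\fS_n$, or by $2.\fA_n$ itself. The faithful $\chi$ are then spin characters labelled by strict partitions $\la$ of $n$, and their degrees are given by the Schur bar-hook formula
$$\chi(1)=2^{\lfloor (n-\ell(\la))/2\rfloor}\, \frac{n!}{\prod \text{bar lengths}},$$
up to a factor of $2$ when passing between $2.\fS_n$ and $2.\fA_n$. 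Morris's theorem states that spin characters vanish outside classes of cycle type in $\mathcal{O}(n)$ (all parts odd) or $\mathcal{D}(n)$ (distinct parts, with the sign matching $\la$). For a distinct-parts class, the value is nonzero only when the cycle type is essentially $\la$ itself. So we need only the following. If $h\bZ(H)$ has all cycles odd, then $o(h\bZ(H))_2=1$ and there is nothing to prove. If $h$ has cycle type $\la$ with distinct parts, then $o(h\bZ(H))_2\le\max_i(\la_i)_2\le 2^{\lfloor\log_2\la_1\rfloor}$, and we must bound this by $\bigl(|H:\bZ(H)|/\chi(1)\bigr)_2$.

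Comparing $(n!)_2$ with the power of $2$ in the degree shows that this codegree $2$-part is at least $2^{\lceil(n-\ell(\la))/2\rceil}$ times the $2$-part of the product of bar lengths, up to a bounded factor of $2$. The bar lengths include $\la_1$ and the even parts $\la_i$, which should dominate $\max_i(\la_i)_2$. I expect the delicate step to be keeping track of the $\pm$ factors of $2$ and the associate splitting between $\fS_n$ and $\fA_n$; where a general inequality is tight, small $n$ can be checked in {\sf GAP}.
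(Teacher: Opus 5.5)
Your reductions are fine and match the paper. The cases $n=6,7$ go to Proposition~\ref{prop:spor}, $L=\fA_n$ follows from Proposition~\ref{prop:Sn}, and $L=2.\fA_n$ with $p$ odd follows from Propositions~\ref{prop:2Sn odd} and~\ref{prop:to qs}. For $p=2$, the Schur--Morris vanishing theorem reduces you to $h$ of cycle type $\la$.

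\textbf{The gap is the final estimate in the case $p=2$, $L=2.\fA_n$.} Use your own degree formula, with $b$ running over the bar lengths of $\la$, in the cases that matter: $n-\ell(\la)$ odd when $H/\bZ(H)\cong\fS_n$, and $n-\ell(\la)$ even when $H=2.\fA_n$. Then
\[
\nu_2\Bigl(\frac{|H:\bZ(H)|}{\chi(1)}\Bigr)=\nu_2\Bigl(\prod b\Bigr)-\Bigl\lfloor\frac{n-\ell(\la)}{2}\Bigr\rfloor .
\]
So the large power of $2$ in a spin degree is \emph{subtracted} from the codegree, not multiplied into it as you claim. What must be shown is
\[
\nu_2\Bigl(\prod b\Bigr)\ge\Bigl\lfloor\frac{n-\ell(\la)}{2}\Bigr\rfloor+\max_i\nu_2(\la_i).
\]
The remark that the $\la_i$ occur among the bar lengths only pays for the second summand.

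There is also no slack to absorb the loss. For $\la=(4,1)$ in $2.\fS_5$, the bar lengths are $1,2,4,5,1$, so the codegree $2$-part is $2^{3-1}=4=o(h\bZ(H))_2$. For $\la=(4,3,2)$ in $2.\fA_9$, the bar lengths $3,4,6,7,2,3,5,1,2$ give $2^{5-3}=4$, again equal to the $2$-part of $o(h\bZ(H))=12$. So you need a sharp inequality on $2$-valuations of bar lengths valid for all strict $\la$. Neither ``should dominate'' nor a finite {\sf GAP} check supplies it.

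The paper avoids this combinatorics by using $2$-block theory of $\hat\fS_n$ (Bessenrodt--Olsson). The argument runs as follows:
\begin{enumerate}
\item If the spin character labelled by $\la$ lies in a $2$-block of weight $w$, its defect groups are Sylow $2$-subgroups of $\hat\fS_{2w}$.
\item Spin characters in that block have height at most $3w/2-l$, where $l$ is the binary digit sum of $w$. Hence the minimal defect in the block is at least $w/2$.
\item Since $h_2$ lies in a defect group, $o(h\bZ(H))_2$ is at most the largest $2$-power not exceeding $2w$.
\end{enumerate}
Comparing these settles every $w\ne2,4$. For $w=4$ the paper uses that the maximal height is exactly $4$. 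For $w=2$ the $\bar4$-abacus shows that either $\la$ has only odd parts, in which case the character vanishes on elements of order divisible by $4$, or $\la$ has exactly one even part, equal to $4$, in which case the defect just suffices (your $(4,1)$ example is of this kind). To complete your proof you must either adopt this block-theoretic argument or genuinely prove the bar-length inequality above.
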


\begin{proof}
By Proposition~\ref{prop:spor} we may assume $n\ne 6,7$, and so $2.\fA_n$ is
the unique non-trivial covering group of $\fA_n$. If $p>2$ or $L=\fA_n$ the
claim follows by Proposition~\ref{prop:to qs} in conjunction with
Propositions~\ref{prop:Sn} and~\ref{prop:2Sn odd}. So now assume $p=2$ and
$L=2.\fA_n$. The faithful characters of $G=\hat\fS_n:=2.\fS_n$ are labelled by
partitions $\la$ of $n$ with distinct parts (in a not necessarily unique way),
and as customary, we write $\sp\la\in\Irr(\hat\fS_n)$ for such a character.
By \cite[Thm~8.7]{HH92} if $\sp\la$ takes non-zero value on $g\in\hat\fS_n$
then either $g$ projects to an element of odd order in $\fS_n$, or to an
element with cycle type~$\la$. In the first case $o(g\bZ(G))_2=1$, so we are
done. Now assume that $g$ has cycle type~$\la$.
To treat this situation we seem to need some more detailed knowledge on
2-blocks of~$\hat\fS_n$. So assume that $\sp\la$ lies in a 2-block $B$ of
$\hat\fS_n$ of weight~$w$. The height of spin characters in~$B$ is bounded
above by $3w/2-l$ by \cite[Thm~3.8]{BO97}, where $l$ is the number of digits
in the 2-adic expansion of $w$. Since defect groups of $B$ are isomorphic to
Sylow 2-subgroups of $\hat\fS_{2w}$ by \cite[Thm~3.2]{BO97}, they have order
$2^{2w-l}$, so the minimal defect in $B$ is at least $w/2$. On the other
hand, the exponent of a Sylow 2-subgroup of $\fS_{2w}$ is the largest 2-power
not bigger than~$2w$. Comparing we find that our claim holds unless possibly
when $w=2$ or $w=4$. Now for $w=4$ the largest height is in fact~4 (see
\cite[Exmp.~3.11]{BO97}), so the smallest defect is~3, and we are done again.
\par
Finally assume $w=2$. By the procedure described in \cite[\S3]{BO97} for
finding the $\bar 4$-abacus, if $\sp\la$ is a spin character in a 2-block of
weight~2 then $\la$ has either only odd parts, or exactly one even part, of
size~4. Now if $\la$ has only odd parts, then by \cite[Thm~8.7]{BO97},
$\sp\la$ vanishes on elements of order divisible by~4, while if $\la$ has a
part~4, then the defect of $\sp\la$ is~8 and our desired inequality holds.
\end{proof}

\section{The groups of Lie type}   \label{sec:Lie}
In this section we prove that most blocks of quasi-simple groups of Lie
type satisfy $(\ddagger)$, and we verify Condition~\eqref{condB} for a few
of the remaining cases. The investigation naturally splits according to whether
the considered prime equals the underlying characteristic of our group, or not.
We refer to \cite{MT} and \cite{GM20} for notation and background on the
structure and representation theory of these groups.

\subsection{Groups of Lie type in their defining characteristic}

Let $\bG$ be a simple algebraic group of simply connected type over a field
of characteristic $p>0$, with a Steinberg endomorphism $F:\bG\to\bG$, and
$G=\bG^F$ the corresponding finite group of Lie type.

\begin{prop}   \label{prop:defchar}
 Condition~$(\ddagger)$ holds for any $p$-block of $G=\bG^F$ in its defining
 characteristic~$p$.
\end{prop}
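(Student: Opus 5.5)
We will use the classical description of $p$-blocks of $G=\bG^F$ in defining characteristic, due to Humphreys. Every $p$-block of $G$ is either of defect zero or of full defect; the latter have a Sylow $p$-subgroup $U$ as defect group. The full-defect blocks are parametrised by the $p'$-part of $\bZ(G)$, that is, by the characters of $\bZ(G)$, and the defect-zero blocks consist only of the Steinberg character $\mathrm{St}$, or of $\mathrm{St}\otimes\lambda$. Blocks of defect zero satisfy $(\ddagger)$ trivially because $D=1$. So the plan is to show, for every $\chi\in\Irr(G)$ lying in a block of full defect, that
$$\exp(U)\le \left(\frac{|G|}{\chi(1)}\right)_p .$$

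The first step is to bound $\exp(U)$. The group $U$ is the group of $F$-fixed points of the unipotent radical of a Borel subgroup. A standard result (Testerman, or directly from the Jordan block structure of regular unipotent elements) says that the exponent of $\bU^F$ is the smallest power $p^e$ with $p^e\ge h$, where $h$ is the Coxeter number of $\bG$. For twisted groups we use the Coxeter number of the ambient algebraic group, so this bound still holds. In particular $\exp(U)\le p\,h$, and $\exp(U)\le h$ when $p\ge h$.

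The second step is to bound $\chi(1)_p$ from above for $\chi\ne\mathrm{St}$. Here we use Lusztig's Jordan decomposition together with Green/Alvis--Curtis duality. The $p$-part of $\chi(1)$ equals $q^{a}$ for the $a$-value of the unipotent character in the Jordan correspondent. This is at most $q^{N'}$, where $N'$ is the $a$-value of the next-to-Steinberg unipotent characters of the Levi-type centraliser. Concretely, $\chi(1)_p\le |G|_p/q$ unless $\chi(1)_p=|G|_p$, and we expect that in fact $|G|_p/\chi(1)_p\ge q^{r}$ with $r\ge1$. The key fact needed is that $\mathrm{St}$ and its twists $\mathrm{St}\otimes\lambda$ are the only characters of full $p$-part. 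The next largest $p$-part is $|G|_p/q^{m}$, where $m$ is at least the minimal $a$-value of a non-Steinberg unipotent character of a Levi subgroup of $\bC_{\bG^*}(s)$. This $m$ equals $1$ only when a rank-one factor appears, for example $\SL_2$-type characters.

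The main obstacle is that $q^{m}$ need not dominate $p^{\lceil\log_p h\rceil}$ when $q=p$ is small and $m$ is small. For $q$-power comparisons we should use the minimal defect: for $\chi$ in the full-defect block with $\chi\ne\mathrm{St}\cdot\lambda$, the value $|G|_p/\chi(1)_p$ equals $q^{N-a}$, where $N$ is the number of positive roots. I would establish $q^{N-a_\chi}\ge p\lceil\cdot\rceil$ by showing that $N-a_\chi$ is at least the semisimple rank of $\bG$ minus the rank of a maximal proper Levi-type factor. The cleanest way is to use the Alvis--Curtis dual: $N-a_\chi=A_{\chi^*}$, the $A$-value of the dual character. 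The $A$-value of any non-trivial unipotent character of a group with root system $\Phi$ is at least $h-1$, attained by the reflection-type characters. This gives $|G|_p/\chi(1)_p\ge q^{h-1}\ge p^{h-1}\ge p^{\lceil\log_p h\rceil}\ge\exp(U)$. The degenerate cases where the dual is trivial within a Levi factor come from a semisimple $s$ with small centraliser. There $\chi(1)_p=|\bC(s)^{F}|_p$ is much smaller, and the same inequality follows with room to spare. I expect verifying the bound $A\ge h-1$ uniformly, including for twisted and Suzuki/Ree types and for disconnected centralisers, to be the delicate point. If needed, I would check the small-rank exceptions directly from known degree lists.
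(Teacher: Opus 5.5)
Your treatment of the unipotent characters matches the paper. Via Alvis--Curtis duality, $|G|_p/\chi(1)_p=q^{A}$ with $A$ the $A$-value of the dual, and $A\ge h-1$ for a non-trivial unipotent character; this is exactly the first row of the paper's table.

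The gap is the non-unipotent characters. The identity $N-a_\chi=A_{\chi^*}$ holds only for unipotent $\chi$. For $\chi\in\cE(G,s)$ with Jordan correspondent $\psi\in\cE(\bC_{G^*}(s),1)$ you only get
$$|G|_p/\chi(1)_p=q^{N-N_1}\cdot q^{N_1}/\psi(1)_p,\qquad q^{N_1}=|\bC^\circ_{G^*}(s)|_p,$$
and the second factor is $1$ when $\psi$ lies over the Steinberg character of $\bC^\circ_{G^*}(s)$. So your conclusion $|G|_p/\chi(1)_p\ge q^{h-1}$ for all $\chi\ne\mathrm{St}$ is false:
\begin{itemize}
\item $\bC_{\bG^*}(s)$ of type $A_2$ in $G_2$ gives $N-N_1=3$, against $h-1=5$;
\item type $B_4$ in $F_4$ gives $N-N_1=8$, against $11$;
\item type $D_n$ in $\bG^*=\SO_{2n+1}$, for $G=\Sp_{2n}(q)$ with $q$ odd, gives $N-N_1=n$, against $2n-1$.
\end{itemize}
Concretely, $G=\Sp_4(3)$ has a character of degree $45$ (inflated from $\mathrm{PSp}_4(3)$), lying in $\cE(G,s)$ with $\bC^\circ_{\bG^*}(s)$ of type $A_1\times A_1$. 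It has $|G|_3/\chi(1)_3=9<27=q^{h-1}$, and $9$ is exactly the exponent of a Sylow $3$-subgroup. So these ``degenerate'' cases do not have small centraliser and room to spare. They are the extremal cases, with $\bC_{\bG^*}(s)$ as large as possible, and $(\ddagger)$ can be sharp there.

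Your suggested remedy, bounding by ``the semisimple rank of $\bG$ minus the rank of a maximal proper Levi-type factor'', also fails. For isolated $s$, $\bC_{\bG^*}(s)$ is a pseudo-Levi subgroup of full semisimple rank, not contained in any proper Levi subgroup, so that quantity is $0$.

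What is missing is a separate lower bound for $N-N_1$ over all $s\ne1$; these are non-central since $\bG^*$ is adjoint. Here $N-N_1$ counts the reflections of $W$ not lying in the reflection subgroup $W(\bC^\circ_{\bG^*}(s))$. You then need a type-by-type check that $q^{N-N_1}$ still dominates the exponent of a Sylow $p$-subgroup. For example, in the $\Sp_{2n}$ case above, $q^n\ge3^n>2n$ suffices. The paper obtains this bound by inspecting the maximal-rank reflection subgroups of each irreducible Weyl group. With that ingredient added, your outline becomes the paper's argument.
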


\begin{table}[htb]
\caption{Bounds for $p$-parts in character degrees of groups of Lie type}   \label{tab:Achi}
$$\begin{array}{c|ccccccccccccc}
  \bG & A_n& B_n& C_n& D_n& E_6& E_7& E_8& F_4& G_2\\
  & & n\ge3& & n\ge4&\\
\hline
 A_\chi\ge& n& 2n-1& 2n-1& 2n-3& 11& 17& 29& 11& 5\\
 N-N_1\ge& n& 2n-2& 2n-2& 2n-2& 16& 26& 56& 8& 3\\
 \log_p(\exp(S))\le& \log_pn& \log_p(2n+1)& \log_p(2n)& \log_p(2n)& 4& 5& 5& 4& 3\\
\end{array}$$
\end{table}

\begin{proof}
The group $G$ has one $p$-block of defect zero, containing the Steinberg
character, while all other $p$-blocks of $G$ are of maximal defect (see
\cite[Thm~6.18]{CE}). Let $P$ be a Sylow $p$-subgroup of $G$, of order $q^N$,
where $q$ is the absolute value of all eigenvalues of $F$ on the character
group of an $F$-stable maximal torus of $\bG$, and $N$ is the number of
positive roots of $\bG$. The degrees of irreducible characters of~$G$ are given
by degree polynomials evaluated at the underlying field size $q$ (see
\cite[Def.~2.3.25]{GM20}). The maximal power of $q$ dividing the degree of a
unipotent character $\chi$ is $q^{N-A_\chi}$, with $A_\chi$
the degree of the degree polynomial of the Alvis--Curtis dual of $\chi$ (see
\cite[Prop.~3.4.21]{GM20}). Lower bounds for $A_\chi$, with $\chi$ not the
Steinberg character, are as given in Table~\ref{tab:Achi}. (They do not
depend on the action of $F$ on the Weyl group.)   \par
The maximal power of $q$ dividing the degree of a non-unipotent character is,
by Lusztig's Jordan decomposition (see \cite[Thm~2.6.22]{GM20}), the order
$q^{N_1}$ of a Sylow $p$-subgroup of the centraliser of some non-trivial and
hence non-central semisimple element~$s$ in the dual group $G^*$ (which is of
adjoint type). Now note that $N$ is the number of reflections in
the Weyl group of $\bG$, while $N_1$ is the corresponding number in the Weyl
group of the centraliser $\bC_{\bG^*}(s)$. By inspection of possible reflection
subgroups of the various types of irreducible Weyl groups, $N-N_1$ is at least
as given in Table~\ref{tab:Achi}.   \par
In the last line of that table we bound the exponent of
$P$, that is, we bound the maximal $m$ such that $P$ contains an element of
order $p^m$, in any characteristic~$p$. In classical groups this is easily
derived from the existence of the classical matrix representation, and for
exceptional groups it is well-known. By inspection, this exponent of $P$ is
bounded above by both $p^{A_\chi}$ and $p^{N-N_1}$, whence our claim.
\end{proof}

\begin{cor}   \label{cor:def aut}
 Conjecture~{\rm\ref{conj:aqs12}} holds for groups $H$ whose non-abelian
 simple composition factor is of Lie type for the defining prime $p$.
\end{cor}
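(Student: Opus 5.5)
The plan is to deduce the corollary from Proposition~\ref{prop:defchar} via Proposition~\ref{prop:to qs}. We fix $H$ as in Conjecture~\ref{conj:aqs12}, with $L=[H,H]$ quasi-simple and $S=L/\bZ(L)$ of Lie type in characteristic~$p$. Conjecture~\ref{conj:aqs12} for $H$ at a prime $r\ne p$ is not part of the claim, which concerns the defining prime; so we work only at~$p$. If $L$ is an exceptional covering group of $S$, or $S$ is the Tits group, then Proposition~\ref{prop:spor} already settles the matter. Otherwise $L$ is a quotient $G/Z_0$ of $G=\bG^F$ with $\bG$ simply connected and $Z_0\le\bZ(G)$. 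The exceptions where $\bG^F$ is not perfect, or not the universal cover, are small groups such as $\SL_2(2)$, $\SL_2(3)$, $\Sp_4(2)$ and $G_2(2)$; these are either solvable or isomorphic to alternating groups and are covered by Theorem~\ref{thm:An}.

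The key observation is that in the generic case $|\bZ(G)|$ is prime to $p$, since the centre of $\bG^F$ consists of semisimple elements. Hence Lemma~\ref{lem:updown}(b) applies with $N=Z_0$. Every $p$-block of $L$ is a $p$-block of $G$, namely the unique block containing it, because a $p'$-central subgroup does not fuse blocks. By Proposition~\ref{prop:defchar} that block satisfies $(\ddagger)$, and therefore so does every $p$-block of $L$. Since $p\nmid|\bZ(L)|$, Conditions $(\ddagger)$ and $(\ddagger^\star)$ coincide for $L$. Proposition~\ref{prop:to qs} then gives Conjecture~\ref{conj:aqs12} for $H$ at~$p$.

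The main point to check is that Lemma~\ref{lem:updown}(b) is correctly applicable, that is, that blocks of $G/Z_0$ correspond bijectively to blocks of $G$ dominating them when $Z_0$ is a central $p'$-group; this is standard. The remaining care is in the bookkeeping of the small exceptional isomorphisms and exceptional multipliers, where $p\mid|\bZ(L)|$ can occur. There I would rely on the GAP verification in Proposition~\ref{prop:spor}, which covers all primes for Conjecture~\ref{conj:aqs12}.
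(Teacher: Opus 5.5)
Your proof is correct and is essentially the paper's: Proposition~\ref{prop:spor} removes the exceptional multipliers and the Tits group, $L$ is then a central quotient of $\bG^F$ with centre of $p'$-order, and Propositions~\ref{prop:defchar} and~\ref{prop:to qs} finish the argument. You also spell out the passage through Lemma~\ref{lem:updown}(b) and the coincidence of $(\ddagger)$ and $(\ddagger^\star)$, which the paper leaves implicit. One slip in your aside on small cases: $G_2(2)'\cong\SU_3(3)$ is neither solvable nor alternating, but the aside is not needed, since such groups are treated (as in the paper) as central quotients of a perfect $\bG^F$ in their other characteristic, here $\SU_3(3)$ at $p=3$.
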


\begin{proof}
By Proposition~\ref{prop:spor} we need not concern ourselves with the groups
having exceptional Schur multipliers, nor with the Tits group. Let $\bG$ be
simple of simply connected type and $F:\bG\to\bG$ a Steinberg endomorphism
such that $L=[H,H]$ is a central quotient of $G=\bG^F$. Since $\bZ(G)$, and
hence $\bZ(L)$, has $p'$-order, the claim follows with
Proposition~\ref{prop:to qs} from Proposition~\ref{prop:defchar}.
\end{proof}

\subsection{Groups of Lie type in cross characteristic}
We now consider our conjectures for groups of Lie type at primes $\ell$
different from the defining characteristic, and prove the following theorem.
As customary, we write $\GL_n(-q):=\GU_n(q)$, and so forth.

\begin{thm}   \label{thm:ell>7}
 Let $S$ be quasi-simple of Lie type and $\ell>2$ a prime different from the
 defining characteristic of $S$. Assume that $\ell{\not|}(n,q-\eps)$ when
 $S/\bZ(S)=\PSL_n(\eps q)$ with $\eps\in\{\pm1\}$, and $\ell>5$ if $S$ is of
 exceptional type. Then all $\ell$-blocks of $S$ satisfy both conditions
 $(\ddagger)$ and $(\ddagger^\star)$. In particular
 Conjecture~{\rm\ref{conj:aqs12}} holds at such primes $\ell$ whenever
 $[H,H]=S$.
\end{thm}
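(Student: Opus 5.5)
We reduce to the finite reductive group $G=\bG^F$ with $\bG$ simple simply connected, of which $S$ is a central quotient. Under the hypotheses, $\ell\nmid|\bZ(G)|$ in all but a few cases. For $\PSL_n(\eps q)$, $\bZ(G)$ has order $(n,q-\eps)$, which is prime to $\ell$ by assumption. For the other classical types the centre is a $2$-group and $\ell>2$. For exceptional types the centre has order dividing $3$ (for $E_6$) or $2$, and $\ell>5$. Hence $(\ddagger)$ and $(\ddagger^\star)$ coincide, and by Lemma~\ref{lem:updown}(b) it suffices to treat the $\ell$-blocks of $G$ itself. The final assertion about Conjecture~\ref{conj:aqs12} then follows from Proposition~\ref{prop:to qs}.

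Next we use the Broué--Michel / Cabanes--Enguehard description of $\ell$-blocks. Let $e$ be the order of $q$ modulo $\ell$ (suitably modified for twisted types). Then $\Irr(B)\subseteq\cE_\ell(G,s)$ for an $\ell'$-semisimple $s\in G^*$. By Bonnafé--Rouquier / Bonnafé--Dat--Rouquier Morita equivalences, together with the Jordan decomposition of degrees, we can pass to $\bC_{G^*}(s)$: the quantity $(|G|/\chi(1))_\ell$ equals the corresponding quantity for a unipotent-type block of the (possibly disconnected, for $\ell$ odd essentially connected) centraliser, and the defect groups correspond. This reduces the problem to \emph{unipotent} $\ell$-blocks of groups $H$ of Lie type, which are products of classical and exceptional factors; the hypotheses guarantee no troublesome $\ell\mid(n,q-\eps)$ factor occurs.

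For a unipotent $\ell$-block $B$ with $e$-cuspidal pair $(L,\lambda)$, the defect group $D$ is an extension of the Sylow $\ell$-subgroup $Z$ of $\bZ(L)^F$ (a homocyclic group $(C_{\ell^a})^r$ with $\ell^a=(\Phi_e(q))_\ell$) by a Sylow $\ell$-subgroup of the relative Weyl group $W_G(L,\lambda)$. This Weyl group is of type $G(d,1,w)$ for classical groups, or has small $\ell$-part for exceptional groups. If $\ell\nmid|W_G(L,\lambda)|$, which covers every exceptional-type block since $\ell>5$ (apart from $\ell=7$ in $E_7,E_8$, which needs a separate check), then $D$ is abelian and Brauer's result, or Corollary~\ref{cor:ab}, finishes. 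Otherwise $D\le Z\rtimes P_w$ with $P_w$ a Sylow $\ell$-subgroup of $\fS_w$. Hence
$$\exp(D)\le \ell^a\cdot\exp(P_w)\le \ell^a\cdot \ell w.$$

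We must show $\chi(1)_\ell^{-1}|G|_\ell\ge\ell^{a}\ell w$ for $\chi\in\Irr(B)$. By Jordan decomposition the $\ell$-defect of $\chi$ is controlled by $\Phi_e$-parts. In the classical case the characters in $B$ are parametrised by $d$-quotients/cores (Fong--Srinivasan, Broué--Malle--Michel), and the $\Phi_e$-part of the degree is given by a hook formula. Removing $w$ $e$-hooks contributes at least $w$ factors $\Phi_{e\ell^j}(q)$ to $|G|/\chi(1)$, exactly as in Proposition~\ref{prop:Sn}. This yields
$$\left(\frac{|G|}{\chi(1)}\right)_\ell\ge (\ell^a)^w\ge \ell^{a}\ell w$$
when $w\ge 2$ and $\ell\ge3$, while $w=1$ gives abelian $D$.

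The main obstacle is this last step: getting a uniform lower bound for the defect of every character in the block, including the non-unipotent ones in $\cE_\ell(G,s)$ with $\ell$-elements $t$. Characters in $\cE(G,st)$ have degrees involving $|G^*:C(st)|$, so small centralisers could give small defect, as in Example~\ref{gunterexmp}. We would handle this by arguing that a centraliser of $st$ still contains a Sylow $\Phi_e$-torus factor of rank matching $w$. Here the hypothesis $\ell\nmid(n,q-\eps)$ is exactly what prevents a configuration like $\SL_p(q)$ with small-defect Deligne--Lusztig characters.
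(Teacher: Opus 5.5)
Your reductions (to $G=\bG^F$ simply connected with $\ell\nmid|\bZ(G)|$, then via Bonnaf\'e--Dat--Rouquier to unipotent blocks, with $\exp(D)$ bounded through the normaliser of a Sylow $d$-torus) follow the paper's architecture. However, the step you flag as the main obstacle is where the real work lies, and the fix you propose is false. The centraliser of an $\ell$-element $t$ need not contain a $\Phi_d$-torus of rank $w$. Take $G=\GL_\ell(q)$ with $\ell\mid q-1$ and $\ell^r=(q-1)_\ell$. The principal block has $d=1$ and $w=\ell$. Yet for $t$ regular in a Coxeter torus, $\bC_{G^*}(t)\cong\GL_1(q^\ell)$ has $\Phi_1$-rank $1$, and $\chi=\pm R_T^G(\theta)\in\cE(G,t)$ lies in the principal block with defect only $\nu_\ell(q^\ell-1)=r+1$, far below $(\ell^r)^w$. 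Condition $(\ddagger)$ survives here only because $\exp(D)\le w\ell^r$ is linear in $w$.

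The missing argument is the one in Proposition~\ref{prop:unip class}. It writes $\bC_{G^*}(t)\cong\GL_{n_0}(q)\times\prod_i\GL_{n_i}(q^{d\ell^{e_i}})$. Bounding the heights of the Jordan correspondent factor by factor gives $\df(\chi)\ge rx+\sum_i n_i(r+e_i)$, with $x=(n_0-n+wd)/d$. The estimate $\prod_{e_i>0}\ell^{n_ie_i}\ge\sum_{e_i>0}n_i\ell^{e_i}$ then shows that the extra factor $\ell^{e_i}$ in $(q^{d\ell^{e_i}}-1)_\ell$ compensates for the lost torus rank, so $\ell^{\df(\chi)}\ge\ell^r w\ge\exp(D)$. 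Without this you have no control over precisely the characters behind Example~\ref{gunterexmp}.

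Second, the hypothesis $\ell\nmid(n,q-\eps)$ does not make ``troublesome factors'' disappear. Take $G=\SL_n(q)$ with $\ell\mid q-1$ and $\ell\nmid n$ (the unitary case is analogous). The Levi subgroups reached by Bonnaf\'e--Dat--Rouquier are $\bigl(\prod_i\GL_{n_i}(q^{e_i})\bigr)\cap\SL_n(q)$. These are not products of classical groups; they have index $q-1$ in a product of general linear groups, and factors with $n_i$ a power of $\ell$ do occur. Passing to this subgroup can lower defects by up to $\nu_\ell(q-1)$, while $\exp(D)$ need not drop accordingly, so $(\ddagger)$ for the $\GL$-blocks is not enough.

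The paper instead uses the sharper divisibility $(q-1)_\ell\,\chi(1)_\ell\exp(D)\mid|\GL_m(q)|_\ell$ for unipotent blocks of $\GL_m(q)$ with $m$ not an $\ell$-power (Proposition~\ref{prop:unip GL}, again resting on the $\ell$-element analysis above). This is applied to a factor with $\ell\nmid n_i$, which exists because $\ell\nmid n$; that is where the hypothesis genuinely enters (Proposition~\ref{prop:dagger SL}).

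Likewise, $\ell=7$ in $E_7$ and $E_8$ is exactly the non-abelian exceptional case. Your ``separate check'' stands in for the whole content of Propositions~\ref{prop:exc good} and~\ref{prop:exc 5}.

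A minor point: with your bound $\exp(P_w)\le\ell w$, the inequality $(\ell^a)^w\ge\ell^a\ell w$ fails for $w=2$, $a=1$. Use $\exp(P_w)\le w$ instead.
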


A first reduction towards the proof of Theorem~\ref{thm:ell>7} is given as
follows:

\begin{prop}   \label{prop:abelian}
 The assertion of Theorem~{\rm\ref{thm:ell>7}} holds whenever $\ell$ does not
 divide the order of the Weyl group of~$S$. In particular, it holds whenever
 $S$ is of exceptional Lie type and $\ell>7$.
\end{prop}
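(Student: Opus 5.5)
The plan is to show that when $\ell$ does not divide $|W|$, every $\ell$-block of $S$ has abelian defect groups. The desired inequality then follows from the height zero argument already used in Corollary~\ref{cor:ab}.

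First I fix notation. Write $S$ as a central quotient of $G=\bG^F$ with $\bG$ simple and simply connected, and let $e$ be the order of $q$ modulo $\ell$. Since $\ell>2$ and $\ell\nmid|W|$, the standard description of Sylow $\ell$-subgroups applies (Broué--Malle, or \cite{MT}/\cite{GM20}). A Sylow $\ell$-subgroup of $G$ lies in $\bT^F$ for a Sylow $e$-torus $\bT$ with $\bC_\bG(\bT)$ a maximal torus. In particular it is abelian, because $\ell\nmid|N_G(\bT)/\bC_G(\bT)|$ as this quotient is a subquotient of $W$. Every defect group is contained in a Sylow $\ell$-subgroup, so every $\ell$-block of $G$ has abelian defect groups.

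Next I pass to $S=G/Z$ with $Z\le\bZ(G)$. A defect group of a block of $S$ is the image of a defect group of a dominating block of $G$, so it is again abelian. The same holds for any intermediate quotient. Also, $\ell\nmid|W|$ forces $\ell\nmid|\bZ(G)|$ in all non-exceptional-multiplier cases, since $|\bZ(G)|$ divides $|W|$ or $(n,q-\eps)$. The case $\ell\mid(n,q-\eps)$ is excluded by hypothesis, and exceptional multipliers are handled by Proposition~\ref{prop:spor}. Hence $(\ddagger)$ and $(\ddagger^\star)$ coincide for $S$.

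With abelian defect group $D$, I apply \cite[Thm~1.1]{KM}: every $\chi$ in the block has height zero. So $p^{\df(\chi)}=|D|\ge\exp(D)$, and likewise $|S:\bZ(S)|_\ell/\chi(1)_\ell=|D/\bZ(S)_\ell|\ge\exp(D/\bZ(S)_\ell)$. Alternatively, Brauer's result on $\exp(\bZ(D))$ suffices directly, since $D=\bZ(D)$. This gives $(\ddagger)$ and $(\ddagger^\star)$. Proposition~\ref{prop:to qs} then yields Conjecture~\ref{conj:aqs12} at $\ell$ for $[H,H]=S$.

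For the final assertion, the primes dividing the Weyl group orders of exceptional types are at most $7$: $|W(E_8)|=2^{14}3^55^27$, and the others have prime divisors in $\{2,3,5,7\}$, including the twisted and Suzuki/Ree types. So $\ell>7$ suffices. The main obstacle is the structural claim about Sylow $\ell$-subgroups being abelian for $\ell\nmid|W|$ uniformly across twisted types, including Suzuki and Ree groups. I would cite the known result that Sylow $\ell$-subgroups are abelian under this hypothesis rather than reprove it.
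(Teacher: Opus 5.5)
Your proposal is correct and follows the paper's proof: after setting aside exceptional covering groups via Proposition~\ref{prop:spor}, the paper cites \cite[Thm~25.14]{MT} for abelian Sylow $\ell$-subgroups when $\ell\nmid|W|$ and concludes with Corollary~\ref{cor:ab}. Your alternative via Brauer's bound $\exp(\bZ(D))\le\ell^{\df(\chi)}$ with $D=\bZ(D)$ works just as well.

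One aside is wrong, though the argument does not depend on it. $\bC_\bG(\bT)$ need not be a maximal torus: for $\SL_5(q)$ with $e=3$ it is $(\GL_1(q^3)\times\GL_2(q))\cap\SL_5(q)$, for example when $q=2$, $\ell=7$. So your normaliser argument only places a Sylow $\ell$-subgroup in $\bC_G(\bT)$, which can be non-abelian. The correct reason it lies in the abelian group $\bT^F$ is the order count $|G|_\ell=|\bT^F|_\ell$. This holds because $\ell\nmid|W|$ rules out cyclotomic factors $\Phi_{e\ell^k}$ with $k\ge1$ in the order polynomial.
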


\begin{proof}
By Proposition~\ref{prop:spor} we may assume that $S$ does not have exceptional
covering groups. Then since $\ell$ does not divide the order of the Weyl group
of $S$, Sylow $\ell$-subgroups of $S$ are abelian (see
\cite[Thm~25.14]{MT}), so our claim follows from Corollary~\ref{cor:ab}.
Since the Weyl groups of exceptional groups of Lie type have order not
divisible by primes $\ell>7$, they are all covered by the preceding
observation.
\end{proof}

For the classical types, we first discuss unipotent blocks.

\begin{prop}   \label{prop:unip class}
 Let $G=\GL_n(q)$, $\GU_n(q)$, $\Sp_{2n}(q)$, or $\SO_n^{(\pm)}(q)$ ($n\ge7$).
 Then Condition~$(\ddagger)$ holds for all unipotent $\ell$-blocks of $G$ for
 all primes $\ell>2$ not dividing~$q$.
\end{prop}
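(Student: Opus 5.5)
We will use the combinatorial description of unipotent $\ell$-blocks of classical groups due to Fong--Srinivasan and Broué--Malle--Michel. Let $e$ be the order of $q$ modulo $\ell$. For $\GL_n(\eps q)$ we replace $e$ by the order $e'$ of $\eps q$ modulo $\ell$. For the symplectic and orthogonal groups we set $d=e$ or $2e$ according as $e$ is odd or even, and use $d$-hooks or $d$-cohooks of symbols.

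In the linear and unitary case, the unipotent $\ell$-blocks of $G$ are parametrised by $e'$-cores $\mu$ of partitions of $n$. A block $B$ of weight $w=(n-|\mu|)/e'$ has defect group $D$ equal to a Sylow $\ell$-subgroup of $\GL_{e'w}(\eps q)$. This is, up to the torus part, the group $C_{\ell^a}\wr P_w$, where $\ell^a=(q^{e'}-\eps^{e'})_\ell$ and $P_w$ is a Sylow $\ell$-subgroup of $\fS_w$. For the other classical types, unipotent blocks are labelled by $d$-cores (or cocores) of symbols of weight $w$. The defect group is again a Sylow $\ell$-subgroup of a centraliser of the form $\GL_w(q^{e})$ or $\GU_w(q^{e/2})$ inside a smaller classical group, that is, essentially $C_{\ell^a}\wr P_w$. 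In every case we obtain
$$\exp(D)\le \ell^a\cdot\exp(P_w)\le \ell^a\, \ell w ,$$
with $\exp(P_w)=\ell^{\lfloor\log_\ell w\rfloor}\le w$, and $\exp(D)=\ell^a$ when $w<\ell$.

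For the degree side, let $\chi$ be a unipotent character in $B$. Its degree is given by the hook formula for partitions, or by Olsson/Lusztig's formula for symbols. The $\ell$-part of $\chi(1)$ comes only from cyclotomic factors $\Phi_{e\ell^j}(q)$, and each hook of length divisible by $e$ contributes the factor $(q^{e'h}-\eps^{e'h})_\ell$. Counting these factors, we will show that $|G|_\ell/\chi(1)_\ell$ is at least $\ell^{a}$ raised to the number of $e$-hooks (resp.\ $d$-hooks or cohooks) of the partition (resp.\ symbol). Here we use that $(q^{em}-1)_\ell=\ell^a m_\ell$ for odd $\ell$, so the extra contributions $m_\ell$ cancel between numerator and denominator as in the symmetric group case. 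Since there are at least $w$ such hooks, exactly as in the proof of Proposition~\ref{prop:Sn}, we get
$$p^{\df(\chi)}\ge \ell^{aw}\ge \ell^{a}\,\ell^{w-1}.$$
For $w\ge2$ and $\ell\ge3$ we have $\ell^{w-1}\ge \ell w/\ell\cdot\ldots$. More precisely, $\ell^{w-1}\ge \exp(P_w)$ always holds, since $\exp(P_w)\le\ell^{\lfloor\log_\ell w\rfloor}$ and $\lfloor\log_\ell w\rfloor\le w-1$. The case $w=1$ is trivial because then $D$ is cyclic, hence abelian, and Corollary~\ref{cor:ab}-type reasoning (Brauer) applies. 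Combining these gives $(\ddagger)$.

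The main obstacle is the precise comparison of $\ell$-parts in the degree formula. One needs that the $\ell$-adic valuation of $|G|/\chi(1)$ is at least $aw$ plus the valuation of $|P_w|$ minus the "symmetric group defect" of the associated $e$-quotient. This amounts to showing that the $\ell$-defect of $\chi$ equals $aw$ plus the $\ell$-defect of a character of $C_e\wr\fS_w$ (or of the relative Weyl group $C_{d}\wr\fS_w$), which is at least $0$. I would establish this via the $e$-quotient bijection, comparing the hook formula for $\chi$ with that of its $e$-core. The orthogonal case with $n\ge7$ and cohooks would be handled by the same bookkeeping using symbols, and for $\SO$ versus $\OO$ we would apply Lemma~\ref{lem:updown}(a), using that $\ell$ is odd.
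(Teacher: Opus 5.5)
Your proposal has a genuine gap. It bounds the defect only of the \emph{unipotent} characters in $B$, but a unipotent $\ell$-block does not consist of unipotent characters alone. By Brou\'e--Michel and Cabanes--Enguehard, $\Irr(B)$ also contains characters $\chi\in\cE(G,t)$ for non-trivial $\ell$-elements $t\in G^*$, namely those whose Jordan correspondent $\chi_t\in\cE(\bC_{G^*}(t),1)$ lies in a suitable $e$-Harish-Chandra series. These are exactly the characters of small defect.

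For instance, let $G=\GL_3(q)$, $\ell=3$ and $(q-1)_3=3$. Then the only unipotent $3$-block is the principal block $B$, and $\exp(D)=9$. All three unipotent characters have defect $4$. By contrast, $\pm R_T^G(\tht)$, with $T$ a Coxeter torus and $\tht\in\Irr(T)$ of order $9$, lies in $B$ and has degree $(q-1)^2(q+1)$, hence defect exactly $2$. So $(\ddagger)$ is an equality there (compare Remark~\ref{rem:GL} and Example~\ref{gunterexmp}). Hook counting on the partitions or symbols labelling unipotent characters never sees these characters, so it cannot establish $(\ddagger)$ for the block.

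The missing step, which is the real content of the paper's proof, is the treatment of these Lusztig series.
\begin{itemize}
\item Since $\chi(1)=|G^*:\bC_{G^*}(t)|_{q'}\,\chi_t(1)$, the character $\chi$ has the same defect as $\chi_t$ in $\bC_{G^*}(t)$.
\item For $\GL_n(q)$ one has $\bC_{G^*}(t)\cong\GL_{n_0}(q)\times\prod_i\GL_{n_i}(q^{e\ell^{k_i}})$ with $n_0+e\sum_in_i\ell^{k_i}=n$.
\item The Cabanes--Enguehard condition says the $e$-cuspidal pair of $\chi_t$ must have the same derived subgroup as the one attached to $B$. This forces $n_0\ge n-ew$.
\item Next, bound the heights of the unipotent factors of $\chi_t$. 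The paper uses \cite[Prop.~6.5]{KMS}, but your hook count suffices here: every hook of a partition of $n_i$ contributes at least $\ell^{a+k_i}$ in $\GL_{n_i}(q^{e\ell^{k_i}})$. This gives $\df(\chi)\ge ax+\sum_in_i(a+k_i)$ with $x=(n_0-n+ew)/e$.
\item Put $y=x+\sum_{k_i=0}n_i$. You then need the estimate $\ell^{\sum_{k_i>0}n_ik_i}\ge\sum_{k_i>0}n_i\ell^{k_i}=w-y$. When $y<w$ this gives $\ell^{\df(\chi)}\ge\ell^{a(y+1)}(w-y)\ge\ell^aw\ge\exp(D)$.
\item The case $y=w$ reduces to your unipotent computation.
\end{itemize}
The analogous analysis is needed for the symplectic and orthogonal groups, with $\GU$-factors over extension fields when $e$ is even.
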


\begin{proof}
Let $G$ be one of the groups in the statement, and let $\bG$ be a connected
reductive group over $\overline{\FF}_q$ with a Frobenius endomorphism $F$ such
that $G=\bG^F$. Let $B$ be a unipotent $\ell$-block of $G$. Now $\ell$ is good
for $\bG$, so by the main result of \cite{CE94} there exists a unipotent
$d$-cuspidal pair $(\bL,\la)$ of~$\bG$, where~$d$ is the order of $q$
modulo~$\ell$, such that $\Irr(B)$ is described as follows: for any
$\ell$-element $t\in G^*=\bG^{*F}$, a character $\chi\in\cE(G,t)$ lies in
$\Irr(B)$ if and only if its Jordan correspondent
$\chi_t\in\cE(\bC_{G^*}(t),1)$, where $(\bG^*,F)$ is dual to $(\bG,F)$, lies
in the $d$-Harish-Chandra series of a
unipotent $d$-cuspidal pair $(\bL_t^*,\la_t^*)$ of $\bC_{\bG^*}(t)$, with
$[\bL,\bL]=[\bL_t,\bL_t]$ and $\la,\la_t$ agreeing on $[\bL,\bL]^F$.
(Here note that $\bC_{\bG^*}(t)$ is a Levi subgroup of~$\bG^*$ since $\ell$ is
good for $\bG$ and the group of components of $\bZ(\bG)$ has order prime to
$\ell$, and thus there exist dual Levi subgroups $\bC_{\bG^*}(t)^*$ and $\bL_t$
of~$\bG$.)
Further, any Sylow $\ell$-subgroup of $\bC_\bG^\circ([\bL,\bL])^F$ is a defect
group of $B$. We note that by Lusztig's Jordan decomposition the degrees are
related by
$\chi(1)=|G^*:\bC_{G^*}(t)|_{q'}\,\chi_t(1)$, and so
$$|G|_\ell/\chi(1)_\ell=|\bC_{G^*}(t)|_\ell/\chi_t(1)_\ell,$$
that is, $\chi$ and $\chi_t$ have the same defect.

First assume $G=\GL_n(q)$. Since $\bL$ is a $d$-split Levi subgroup of $\GL_n$
with a $d$-cuspidal unipotent character, it has the rational form
$\bL^F\cong (q^d-1)^b\GL_{n-bd}(q)$ for some $b\ge0$ (see
\cite[Exmp.~3.5.29]{GM20}), and thus $\bC_G([\bL,\bL])\cong\GL_{bd}(q)$. We may,
and will in fact, assume $b>0$ since otherwise $B$ is of defect zero. Now a
Sylow $\ell$-subgroup of $\GL_{bd}(q)$ lies in the normaliser of one of its
Sylow $d$-tori \cite[Cor.~25.17]{MT}, so in an extension of a homocyclic group
$C_{q^d-1}^b$
by the relative Weyl group $W_d\cong C_d\wr\fS_b$ \cite[Exmp.~3.5.29]{GM20}.
Since clearly the order of $\ell$-elements in $\fS_b$ is bounded above by~$b$,
the exponent of a defect group of $B$ is bounded above by $b\ell^r$, where
$\ell^r:=(q^d-1)_\ell$ is the exact power of $\ell$ dividing $q^d-1$.

Let $t\in G^*\cong G=\GL_n(q)$ be an $\ell$-element. Considering the
eigenspaces of~$t$ it is easily seen that
$$\bC_{G^*}(t)\cong\GL_{n_0}(q)\times\prod_{i} \GL_{n_i}(q^{d\ell^{e_i}})$$
with $n_0+d\sum_i n_i\ell^{e_i}=n$, where $i$ runs over minimal polynomials of 
non-trivial $\ell$-elements in $\overline{\FF}_q^\times$, $d\ell^{e_i}$ is the
degree of $i$ and $n_i$ the dimension of the associated eigenspace. 

For the Lusztig series $\cE(\bC_{G^*}(t),1)$ to contain elements from a
$d$-Harish-Chandra series $(\bL_t^*,\la_t^*)$ with $[\bL_t,\bL_t]=[\bL,\bL]$ we
certainly need $\bL_t^*\le \bC_{\bG^*}(t)$ (up to conjugation). Now the only
$d$-split Levi subgroups of $\GL_{n_i}(q^{d\ell^{e_i}})$ with a $d$-cuspidal
unipotent character are the maximally split tori \cite[Exmp.~3.5.29]{GM20},
so in fact $\bL_t\le \GL_{n_0}\bT$, for $\bT$ a suitable torus of
$\bC_{\bG^*}(t)$, and hence $n-bd\le n_0$.

Let $\chi\in\cE(G,t)\cap\Irr(B)$ and $\chi_t\in\cE(\bC_{G^*}(t),1)$ its
Jordan correspondent. Then $\chi_t=\chi_0\boxtimes\prod_i\chi_i$ with
$\chi_0\in\cE(\GL_{n_0}(q),1)$ and $\chi_i\in\cE(\GL_{n_i}(q^{d\ell^{e_i}}),1)$.
Now by \cite[Prop.~6.5]{KMS} the height of $\chi_i$, $i\ne0$, is bounded above
by the maximal height in the Weyl group $\fS_{n_i}$, so certainly by
$|\fS_{n_i}|_\ell$, while
$|\GL_{n_i}(q^{d\ell^{e_i}})|_\ell=\ell^{n_i(r+e_i)}\,|\fS_{n_i}|_\ell$.
Thus
$$|\GL_{n_i}(q^{d\ell^{e_i}})|_\ell/\chi_i(1)_\ell\ge \ell^{n_i(r+e_i)},$$
and similarly,
$$|\GL_{n_0}(q)|_\ell/\chi_0(1)_\ell\ge \ell^{rx}$$
with $x:=(n_0-n+bd)/d$. We thus have $\df(\chi)\ge rx+\sum_i n_i(r+e_i)$.
Write $y:=x+\sum_{e_i=0}n_i$. First assume $b>y\ge x\ge0$, and hence
there is at least one $i$ with $n_ie_i>0$. With the trivial estimate
$$\ell^{\sum_{e_i>0} n_ie_i}=\prod_{e_i>0} \ell^{n_ie_i}
  \ge\sum_{e_i>0} \ell^{n_ie_i} \ge\sum_{e_i>0} n_i\ell^{e_i}=b-y$$
we obtain
$$\ell^{rx+\sum_i n_i(r+e_i)} \ge\ell^{ry+r\sum_{e_i>0}n_i}(b-y)
  \ge\ell^{r(y+1)}(b-y)=\ell^r\cdot \ell^{ry}(b-y).$$
Since $(\ell^r)^y(b-y)\ge b$ for all $\ell^r\ge3$, $b>y$, we are done in this
case. Finally assume $b=y$. If $b=1$ then defect groups of $B$
are abelian, so we have $b\ge2$. In this case $\ell^{rb}\ge \ell^rb$ and again
our claim follows.
\par
The argument for $\GU_n(q)$ is entirely analogous, replacing $q$ by $-q$
throughout (so now $d$ is the order of $-q$ modulo $\ell$,
$\GL_{n_0}(-q)=\GU_{n_0}(q)$ and so on).   \par
If $G$ is one of the other classical groups we let $d$ be the order of $q$
and $e$ the order of $q^2$ modulo~$\ell$. Here the $d$-split Levi subgroups
possessing a $d$-cuspidal unipotent character have the rational form
$\bL^F=(q^e+(-1)^d)^a H(q)$, where $H$ is a classical group of the same
(or possibly twisted) type as $G$ of rank $n-ae$ (see
\cite[Exmp.~3.5.29(b)]{GM20}).

Assume first that $d$ is odd
and so $d=e$. Then centralisers of $\ell$-elements $t\in G^*$ have the form
$$\bC_{G^*}(t)\cong G_{n_0}(q)\times\prod_{i} \GL_{n_i}(q^{d\ell^{e_i}})$$
with $n_0+d\sum_i n_i\ell^{e_i}=n$ where $G_{n_0}$ has the same type as $G^*$
and rank $n_0$ and again $i$ runs over minimal polynomials of non-trivial
$\ell$-elements, of degree $d\ell^{e_i}$. According to \cite[Thm (ii)]{CE94}
a defect group of $B$ is contained in $\bC_\bG^\circ([\bL,\bL])^F$, which in
this case is a group of the same type as~$G$ and rank~$ad$. Setting again
$x:=(n_0-n+ad)/d$ we get $\df(\chi)\ge rx+\sum_i n_i(r+e_i)$ for any
$\chi\in\cE(G,t)\cap\Irr(B)$, where $\ell^r=(q^d-1)_\ell$, while the exponent
of a defect group is bounded above by $a\ell^r$, so we may conclude by exactly
the same estimates as before.

In the case when $d$ is even and so $e=d/2$, centralisers of $\ell$-elements
$t\in G^*$ have the form
$$\bC_{G^*}(t)\cong G_{n_0}(q)\times\prod_{i} \GU_{n_i}(q^{e\ell^{e_i}})$$
with $n_0+e\sum_i n_i\ell^{e_i}=n$ where $G_{n_0}$ has the same type as $G^*$
(or possibly type $\SO^{-\eps}$ if $G$ is of type $\SO^\eps$) and rank $n_0$,
and $i$ runs over minimal polynomials of non-trivial $\ell$-elements over
$\FF_{q^2}$, of degree $e\ell^{e_i}$. Again by \cite[Thm (ii)]{CE94}, a defect
group of $B$ is contained in $\bC_\bG^\circ([\bL,\bL])^F$, a group of rank~$ae$.
The argument is now entirely similar to the one in the previous case.
\end{proof}

We will need a slight improvement of the previous result for linear and unitary
groups:

\begin{prop}   \label{prop:unip GL}
 Let $G=\GL_n(\eps q)$ with $\eps\in\{\pm1\}$, $\ell>2$ a prime dividing
 $q-\eps$ and suppose $n$ is not a power of $\ell$. Then for all characters
 $\chi$ in the (unique) unipotent $\ell$-block $B$ of $G$
 $$(q-\eps)_\ell\,\chi(1)_\ell\,\exp(D) \quad\text{divides}\quad |G|_\ell\,,$$
 where $D$ is a defect group of $B$. In particular, under the stated
 assumptions, the (unique) unipotent $\ell$-block of $\SL_n(\eps q)$ satisfies
 $(\ddagger)$.
\end{prop}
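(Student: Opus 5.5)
The plan is to use the explicit description of $B$ from the proof of Proposition~\ref{prop:unip class} in the special case $d=1$. Since $\ell\mid q-\eps$, the order of $\eps q$ modulo $\ell$ is $d=1$. The $1$-cuspidal pair is then a maximal torus with $b=n$, so $B$ is the principal $\ell$-block, and $D$ is a Sylow $\ell$-subgroup of $G$. Put $\ell^r:=(q-\eps)_\ell$ and let $m$ be defined by $\ell^m\le n<\ell^{m+1}$. Then $D\cong C_{\ell^r}\wr P$ with $P\in\mathrm{Syl}_\ell(\fS_n)$, so $\exp(D)=\ell^{r+m}$. The claim becomes $\df(\chi)\ge 2r+m$ for all $\chi\in\Irr(B)$.

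For $\chi\in\Irr(B)$, let $t$ be an $\ell$-element with $\chi\in\cE(G,t)$. As in that proof, $\bC_{G^*}(t)\cong\GL_{n_0}(\eps q)\times\prod_i\GL_{n_i}((\eps q)^{\ell^{e_i}})$ with $n_0+\sum_i n_i\ell^{e_i}=n$, and $\chi$ and its Jordan correspondent $\chi_t$ have the same defect. Here I let the index $0$ absorb no special role and treat every eigenvalue uniformly. The height bound from \cite[Prop.~6.5]{KMS} then gives
$$\df(\chi)\ge rn_0+\sum_i n_i(r+e_i)=rk+\sum_i n_ie_i,\qquad k:=n_0+\sum_i n_i,$$
where $k$ is the number of "blocks" counted with multiplicity. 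Note that factors with $e_i=0$ behave like $n_0$.

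The main step is the elementary estimate $rk+\sum_i n_ie_i\ge 2r+m$, which I will split by the value of $k$.

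If $k=1$, then either $n=n_0=1$ or $n=\ell^{e_i}$ with $n_i=1$. In both cases $n$ is a power of $\ell$, which is excluded. This is exactly where the hypothesis enters: for $n=\ell^e$ the character with $\bC_{G^*}(t)=\GL_1((\eps q)^{n})$ has defect only $r+e$.

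If $k\ge2$, it suffices to show $(k-2)r+\sum_i n_ie_i\ge m$. Since $r\ge1$, I will prove $\ell^{(k-2)+\sum n_ie_i}\ge n$, or at least that this exponent is $\ge m$. I plan an induction on the number of factors, using $\ell\ge3$ together with inequalities such as $a+c\le ac$ and $2\ell^e<\ell^{e+1}$. Sample cases: $n=2\ell^e$ gives $2e\ge e=m$; $n=1+\ell^e$ gives $e\ge m=e$; $n_0=n$ gives $(n-2)r\ge n-2\ge\log_\ell n$. I expect this bookkeeping to be the fiddliest part; the cleanest route is to bound $m\le\log_\ell n$ and compare $n$ with the product of the factor sizes.

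For $\SL_n(\eps q)$, let $\psi$ lie in its unipotent block $b$. Then $\psi$ lies under some $\chi$ in a block of $G$ covering $b$. Such blocks are twists of $B$ by linear characters of $G/\SL_n(\eps q)$, so after twisting we may assume $\chi\in\Irr(B)$. We have $\psi(1)_\ell\le\chi(1)_\ell$ and $|\SL_n(\eps q)|_\ell=|G|_\ell/\ell^r$. The defect group $D\cap\SL_n(\eps q)$ of $b$ has exponent at most $\exp(D)$. The divisibility just proved then yields
$$\exp(D\cap\SL_n(\eps q))\le \frac{|G|_\ell}{\ell^r\,\chi(1)_\ell}\le\frac{|\SL_n(\eps q)|_\ell}{\psi(1)_\ell},$$
which is $(\ddagger)$ for $b$.
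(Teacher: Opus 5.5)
Your argument is essentially the paper's own proof: the principal block with Sylow defect group, the Jordan-decomposition bound $\df(\chi)\ge rn_0+\sum_i n_i(r+e_i)$, the hypothesis on $n$ entering exactly in the single-factor case $\bC_{G^*}(t)\cong\GL_1((\eps q)^n)$, and the same Clifford-theoretic descent to $\SL_n(\eps q)$. Your uniform count via $k$ and the exact exponent $\ell^{r+m}$ simply replace the paper's case split on $\sum_i n_i$, where the paper uses the cruder bound $\ell^r n$ and refines it for $n=1+\ell^{e_1}$.

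The bookkeeping you left open takes one line, but it cannot go through $\ell^{(k-2)+\sum_i n_ie_i}\ge n$. That fails for $n=2$ and for $n=1+\ell^e$, and your claim $n-2\ge\log_\ell n$ is false at $n=2$. Instead, let $A$ be the largest $e_i$ (or $0$ if there are none). Then $n\le k\ell^A$, which gives $m\le A+\lfloor\log_\ell k\rfloor\le A+k-2\le (k-2)r+\sum_i n_ie_i$ for $k\ge2$ and $\ell\ge3$.
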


\begin{proof}
We discuss the case $G=\GL_n(q)$, so $\eps=1$, the unitary case being entirely
similar. Since $\ell|(q-1)$ the only unipotent block of $G$ is the principal
block $B$, associated to the trivial unipotent $1$-cuspidal pair $(\bT,1)$,
where $\bT$ is a maximally split torus of $\bG$ (e.g.\ by \cite{CE94}). So any
defect group $D$ of $B$ is a Sylow $\ell$-subgroup of~$G$, of exponent bounded
above by $\ell^rn$, with $\ell^r=(q-1)_\ell$, since $D$ has a conjugate in the
torus normaliser $\bT^F.\fS_n$.

We now follow the argument in the proof of Proposition~\ref{prop:unip class}.
If $t\in G^*=\GL_n(q)$ is an $\ell$-element then
$$\bC_{G^*}(t)\cong \GL_{n_0}(q)
  \times\prod_{i}\GL_{n_i}(q^{\ell^{e_i}})$$
for suitable $n_i\ge1$, $e_i\ge0$ with $n_0+\sum_{i} n_i\ell^{e_i} =n$, where
again $i$ runs over non-constant irreducible polynomials over $\FF_q$ of
$\ell$-power degree $\ell^{e_i}$. Let $\chi\in\cE(G,t)$. Then as seen in
Proposition~\ref{prop:unip class} we have
$\df(\chi)\ge rn_0+\sum_{i} n_i(r+e_i)$.

If $\sum_i n_i\ge2$ then $r+\df(\chi)\ge r\log_\ell n$ by the calculations in
the proof of Proposition~\ref{prop:unip class}, yielding our claim in this
case. Next, when $\sum_{i} n_i=1$, that is, there is just one non-trivial factor
$\GL_1(q^{\ell^{e_i}})$, then our inequality still holds when $n_0>1$.
If $n_0=1$ then in fact the exponent of a Sylow $\ell$-subgroup of~$G$ is
bounded above by $\ell^r(n-1)=\ell^{r+e_1}$ and the asserted inequality holds
(as an equality). The case $n_0=0$ can only happen when
$n=\ell^{e_1}$ is an $\ell$-power, which was excluded. Finally, when
$\sum_i n_i=0$ then $n_0=n$, $t$ is central and we are fine when $n\ge3$.
For $n=2$ the defect groups are abelian as $\ell\ne2$, so again we are done.

For the final claim note that $G_0=\SL_n(q)$ is a normal subgroup of $\GL_n(q)$
of index $q-1$, and again the principal block $B_0$ is the only unipotent
$\ell$-block of $G_0$. The exponent of a Sylow $\ell$-subgroup $D_0$ of $G_0$
is of course bounded above by the one of~$D$, and the degrees of all characters
in $B_0$ divide some character degree in $B$ by Clifford theory, hence the
assertion follows from the proven inequality for $\GL_n(q)$.
\end{proof}

\begin{rem}   \label{rem:GL}
 Is is clear from the proof of Proposition~\ref{prop:unip GL} that the
 assumption on $n$ is only used to rule out the possibility that
 $\bC_{G^*}(t)=\GL_1(q^n)$, with $n=\ell^e>1$ an $\ell$-power. In that case,
 $t$ is a regular element in a Coxeter torus $T\cong\FF_{q^n}^\times$
 of~$\GL_n(q)$, and $\cE(G,t)=\{\pm R_T^G(\tht)\}$ for some $\tht\in\Irr(T)$ of
 $\ell$-power order.
 Note that the order of $t$ must be $(q^n-1)_\ell$, the precise power of $\ell$
 dividing $q^n-1$, since otherwise $t$ lies in a subtorus of order
 $q^{n/\ell}-1$, has centraliser containing $\GL_\ell(q^{n/\ell})$ and so
 cannot be regular.
 Thus, even when $n=\ell^e$ all characters in the principal block of $\GL_n(q)$
 satisfy the stronger inequality from Proposition~\ref{prop:unip GL} except
 only for $\chi=\pm R_T^G(\tht)$ with $\tht\in\Irr(T)$ of order $(q^n-1)_\ell$.
 This situation, of course, leads to Example~\ref{gunterexmp}.
\end{rem}

\begin{prop}   \label{prop:dagger class}
 Let $G$ be one of $\GL_n(q)$, $\GU_n(q)$, $\Sp_{2n}(q)$ with $n\ge2$, or
 $\Spin_n^{(\pm)}(q)$ with $n\ge7$, and let $\ell>2$. Then all $\ell$-blocks
 of~$G$ satisfy Condition~$(\ddagger)$.
\end{prop}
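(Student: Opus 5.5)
The plan is to reduce from arbitrary $\ell$-blocks to unipotent blocks of smaller groups via the Bonnafé--Rouquier (Bonnafé--Dat--Rouquier) Morita equivalence, and then invoke Proposition~\ref{prop:unip class}.

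\emph{Setup.} Let $B$ be an $\ell$-block of $G$. By Broué--Michel, $B$ lies in $\cE_\ell(G,s)$ for some semisimple $\ell'$-element $s\in G^*$. Since $\ell>2$ is good and $\ell$ does not divide the component group of $\bZ(\bG)$ in the relevant cases (for $\Spin$ we pass to $\SO$ or the conformal group if needed, using Lemma~\ref{lem:updown}(a)), $\bC_{\bG^*}(s)$ is a Levi subgroup, or a group whose connected component is a Levi subgroup. For $\GL_n$ and $\GU_n$ this centraliser is a product of groups $\GL_{m}(\pm q^{k})$. For the other classical groups it is $G_{m_0}(q)\times\prod\GL_{m_j}(\pm q^{k_j})$, possibly with a component group of order $\le2$ that is prime to $\ell$.

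\emph{Transfer.} Let $\bL^*=\bC_{\bG^*}(s)$ (or a minimal Levi containing it) with dual $\bL$. Bonnafé--Rouquier gives a Morita equivalence between $B$ and a block $b$ of $\bL^F$ (or of $N_G(\bL)$ in the disconnected case) lying over $\cE_\ell(L,s)$. This equivalence has three properties:
\begin{itemize}
\item it preserves defect groups, up to isomorphism, so $\exp(D)$ is unchanged;
\item it acts on characters by $\pm R_L^G$, so degrees satisfy $\chi(1)=|G:L|_{q'}\,\psi(1)$;
\item since $\ell$ divides neither $q$ nor, for an $\ell'$-element $s$, the index $|G:L|_{q'}$ evaluated at the $\ell$-part (the relevant cyclotomic factors $q^k-1$ with $\ell\mid q^k-1$ are common to $G$ and $L$ — the key point to check), we get $|G|_\ell/\chi(1)_\ell=|L|_\ell/\psi(1)_\ell$.
\end{itemize}
Tensoring with the linear character $\hat s$ turns $b$ into a unipotent block of $L$. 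Hence Condition $(\ddagger)$ for $B$ is equivalent to Condition $(\ddagger)$ for a unipotent block of $L$.

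\emph{Finish.} $L$ is a direct product of factors $\GL_m(\pm q^k)$ and at most one classical group of the type of $G$. A unipotent block of a direct product is the tensor product of unipotent blocks of the factors, with defect group $D=\prod D_j$ and $\exp(D)=\max_j\exp(D_j)$, while defects add. So it suffices that each factor satisfies $(\ddagger)$ for unipotent blocks. For the factors $\GL_m(\pm q^k)$ this follows from Proposition~\ref{prop:unip class} with $q$ replaced by $\pm q^k$. For the classical factor of small rank (the cases below $n\ge7$, e.g.\ $\SO_{\le6}$) we handle it either by small-rank isomorphisms with linear or unitary groups, or by noting that the defect groups are abelian and applying Brauer's theorem. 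For the disconnected case, the extension by the component group has order prime to $\ell$, so Lemma~\ref{lem:updown}(a) passes $(\ddagger)$ between $L$ and $N$.

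\emph{Main obstacle.} The hardest step is the bookkeeping for $\Spin_n$ and $\Sp_{2n}$. There, $\bC_{\bG^*}(s)$ can be disconnected and contain orthogonal factors of both types, and the equality of defects of Jordan correspondents must be verified carefully. I would first try quasi-isolated elements $s$, where the centraliser is $\SO^\pm_{a}\times\SO^\pm_{b}$ (or $\Sp\times\Sp$ after passing to $\SO$), and apply Proposition~\ref{prop:unip class} to each factor. Small-rank factors are covered by the abelian-defect case via Brauer's inequality, or by Corollary~\ref{cor:rob}.
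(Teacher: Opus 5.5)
\paragraph{A gap in the isolated case.} Your reduction works for $\GL_n(q)$ and $\GU_n(q)$, and more generally whenever $\bC_{\bG^*}(s)$ is a Levi subgroup. For $\Sp_{2n}(q)$ and $\Spin_n^{(\pm)}(q)$, however, it has a genuine gap.

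You argue that $\bC_{\bG^*}(s)$, or its connected component, is a Levi subgroup because $\ell$ is good and prime to the component group of $\bZ(\bG)$. That is the criterion for centralisers of $\ell$-elements; it is how it is used for $t$ in Proposition~\ref{prop:unip class}. It says nothing about the $\ell'$-element $s$. Since $\ell$ is odd, $s$ may have even order and the eigenvalue $-1$.

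For example, take $G=\Sp_{2n}(q)$ and let $s\in\bG^*=\SO_{2n+1}$ be an involution whose $(-1)$-eigenspace has dimension $2k\ge4$. Then $\bC^\circ_{\bG^*}(s)\cong\SO_{2k}\times\SO_{2n-2k+1}$. This group is semisimple, so it lies in no proper Levi subgroup. For such isolated $s$:
\begin{itemize}
\item the minimal Levi subgroup containing $\bC^\circ_{\bG^*}(s)$ is $\bG^*$ itself, so Bonnaf\'e--Rouquier/BDR gives nothing;
\item $s$ is not central in $\bG^*$, so there is no linear character $\hat s$ to tensor with;
\item hence $B$ is not a twist of a unipotent block of any Levi subgroup.
\end{itemize}
The same problem arises when $\bC^\circ_{\bG^*}(s)$ lies in a proper Levi subgroup $\bL^*$ without being one. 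Then the block $b$ of $\bL^F$ lies in $\cE_\ell(L,s)$ with $s$ isolated in the classical factor of $\bL^*$, not in a twisted unipotent series.

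\paragraph{What the missing step requires.} Your final paragraph proposes applying Proposition~\ref{prop:unip class} to the factors of $\bC_{\bG^*}(s)$. This presupposes exactly the missing ingredient. You need a correspondence between $\Irr(B)$ and the characters of a \emph{single} unipotent block of $\bC^\circ_{\bG^*}(s)$, a product of two classical groups, which preserves heights and comes with isomorphic defect groups.

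Jordan decomposition applied to the elements $st$ does match the defects of individual characters. But it says nothing about how $\Irr(B)$ is distributed into blocks of the centraliser. Nor does it say anything about the defect groups of $B$, which live in $G$ and not in the dual group.

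The paper obtains this input from the Cabanes--Enguehard description of blocks and defect groups of classical groups \cite{CE99}, in the form stated in \cite[Thm~1.6]{En08}. It treats the non-isolated case by BDR together with induction on the rank. There the factors of the minimal Levi subgroup are of type $\GL$, $\GU$, or a smaller classical group of the same type as $G$. Some such input is needed to complete your argument.

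\paragraph{A minor point.} The equality $|G|_\ell/\chi(1)_\ell=|L|_\ell/\psi(1)_\ell$ follows immediately from $\chi(1)=|G:L|_{q'}\psi(1)$, since $\ell\nmid q$. It does not require $\ell\nmid|G:L|_{q'}$, which is false in general.
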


\begin{proof}
There exists a connected reductive algebraic group $\bG$ over
$\overline{\FF}_q$ with one simple component and a Frobenius map $F:\bG\to\bG$
such that $G=\bG^F$.
Let $B$ be an $\ell$-block of $G$. Then there is a semisimple $\ell'$-element
$s\in G^*$ such that $\Irr(B)\subseteq\cE_\ell(G,s)$ (see \cite[Thm~9.12]{CE}).
Let $\bL^*\le\bG^*$ be a minimal $F$-stable Levi subgroup with
$\bC_{\bG^*}^\circ(s)\le\bL^*$, and let $\bL\le\bG$ be dual to $\bL^*$. Then by
\cite[Thm~1.1]{BDR} there is a group~$N$ containing~$\bL^F$ with index dividing
$a(s):=|\bC_{\bG^*}(s)^F:\bC_{\bG^*}^\circ(s)^F|$, an $\ell$-block $b_N$ of $N$
with defect groups isomorphic to those of $B$, and a height preserving
bijection from
$\Irr(B)$ to $\Irr(b_N)$ (in fact, in most cases there is even a Morita
equivalence between $B$ and $b_N$, but we don't need this here).
Thus it suffices to prove $(\ddagger)$ for $b_N$. Let $b$ be a block of~$\bL^F$
covered by $b_N$. By \cite[Prop.~14.20]{MT}, the index $a(s)$
divides $o(s)$ and so is prime to~$\ell$, thus by Lemma~\ref{lem:updown} we
are done if we can show $b$ satisfies $(\ddagger)$.

First assume that $\bL$ is a proper Levi subgroup of $\bG$ (so $s$ is not
isolated in~$\bG^*$). If $G$ is one of $\GL_n(q)$, $\GU_n(q)$ or $\Sp_{2n}(q)$
then $\bL$ is a direct product of groups of type $\GL_m$, $\GU_m$, and
$\Sp_{2m}$, with $m<n$, possibly over extension fields of~$\FF_q$. By
induction over the rank, $(\ddagger)$ holds for all $\ell$-blocks of any such
direct factor, and hence also for~$b$. (Note that $(\ddagger)$ trivially holds
for the cyclic groups $\GL_1(q)$ and $\GU_1(q)$.) Now assume $G$ is a spin
group. The Levi subgroups of $\SO_{2n+1}(q)$ and $\SO_{2n}^\pm(q)$ are again
direct products of groups of type $\GL_m$, $\GU_m$, $\SO_{2m+1}$ or
$\SO_{2m}^\pm$ with $m<n$, possibly over extension fields, thus by induction
all of their $\ell$-blocks satisfy $(\ddagger)$. The group $\Spin_{2n+1}(q)$ is
a subgroup of index at most~2 of a central extension of degree at most~2 of
$\SO_{2n+1}(q)$, so the same relation holds between the Levi subgroups of these
two. Thus, by Lemma~\ref{lem:updown}, the blocks of proper Levi subgroups of
$\Spin_{2n+1}(q)$ also satisfy~$(\ddagger)$, and so $(\ddagger)$ holds for $b$.
The same argument applies to the groups $\Spin_{2n}^\pm(q)$. This completes
the proof for non-isolated elements $s$.

Now assume $s$ is isolated. If $s$ is central in $\bG^*$ (so $s=1$ or
$\bG=\GL_n$), the block $B$ is either unipotent or obtained from a unipotent
block of $G$ by tensoring with a linear character of $\ell'$-order, and so it
satisfies $(\ddagger)$ by Proposition~\ref{prop:unip class}. So now assume
$s\ne1$ and $\bG$ is not $\GL_n$. Here, up to extensions of 2-power order,
$\bC_{\bG^*}^\circ(s)$ is a product of two classical groups (see
\cite[Tab.~2]{Bo05}). By comparing the description of blocks for $G$ and for
the occurring products, as well as their defect groups, in \cite{CE99}, it
follows (as stated in \cite[Thm~1.6]{En08}) that there is a unipotent block
of $\bC_{\bG^*}^\circ(s)$ with characters of the same heights as in $B$ and
with isomorphic defect groups. So then again we may conclude by
Proposition~\ref{prop:unip class}.
\end{proof}

In most cases, the result of Proposition~\ref{prop:dagger class} descends to
the special linear and unitary groups:

\begin{prop}   \label{prop:dagger SL}
 Let $G=\SL_n(\eps q)$ with $\eps\in\{\pm1\}$, $n\ge2$, let $\ell>2$ be a prime
 dividing $q-\eps$ but not dividing $n$. Then all $\ell$-blocks of~$G$ satisfy
 Conditions~$(\ddagger)$ and $(\ddagger^\star)$.
\end{prop}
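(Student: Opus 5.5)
The plan is to pass from $\tilde G:=\GL_n(\eps q)$, where Proposition~\ref{prop:dagger class} already gives $(\ddagger)$ for every $\ell$-block, down to $G=\SL_n(\eps q)$. The hypothesis $\ell\nmid n$ makes this possible. Since $\ell\nmid n$, we have $\gcd(n,q-\eps)_\ell=1$, so $\bZ(G)$, which is cyclic of order $\gcd(n,q-\eps)$, is an $\ell'$-group. Hence $(\ddagger)$ and $(\ddagger^\star)$ coincide for $G$, and it suffices to prove $(\ddagger)$.

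The index $|\tilde G:G|=q-\eps$ is divisible by $\ell$, so Lemma~\ref{lem:updown}(a) cannot be applied directly. I would therefore work with the intermediate group $G\bZ(\tilde G)$. Its index in $\tilde G$ is $(q-\eps)/\gcd(n,q-\eps)$, and $\ell$ divides it. On the other hand, $G\bZ(\tilde G)$ is a central product. Since $\ell\nmid n$, we can write $\bZ(\tilde G)_\ell=O_\ell(\bZ(\tilde G))$ with $\bZ(\tilde G)_\ell\cap G=1$, so
$$\tilde G\;\ge\; G\times \bZ(\tilde G)_\ell\cdot(\text{$\ell'$-part}).$$
The key observation is that $\tilde G= G\times \bZ(\tilde G)_\ell$ up to an $\ell'$-index. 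Indeed, the determinant map $\det:\bZ(\tilde G)_\ell\to \FF^\times$ sends $z$ to $z^n$, which is a bijection on the Sylow $\ell$-subgroup of the group of order $q-\eps$, because $\ell\nmid n$. Hence $|\tilde G : G\times \bZ(\tilde G)_\ell|$ is prime to $\ell$.

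With this decomposition, let $B$ be an $\ell$-block of $G$. Set $\tilde G_0:=G\times Z_\ell$ with $Z_\ell:=\bZ(\tilde G)_\ell$. The blocks of $\tilde G_0$ are of the form $B\otimes\{\lambda\}$ for $\lambda\in\Irr(Z_\ell)$, the block $B\otimes \Irr(Z_\ell)$ has defect group $D\times Z_\ell$, and its characters are $\chi\times\lambda$ with $\chi\in \Irr(B)$. By Lemma~\ref{lem:updown}(a), applied to $\tilde G_0\unlhd\tilde G$ (which has $\ell'$-index), $(\ddagger)$ for a block of $\tilde G$ covering it transfers to this block. That block satisfies $(\ddagger)$ by Proposition~\ref{prop:dagger class}.

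It then remains to go from $G\times Z_\ell$ down to $G$. For $\chi\times\lambda$ in the block we have $\df(\chi\times\lambda)=\df(\chi)+\log_\ell|Z_\ell|$, and $\exp(D\times Z_\ell)=\max(\exp D,\exp Z_\ell)$. The inequality $\max(\exp D,|Z_\ell|)\le \ell^{\df(\chi)}|Z_\ell|$ does not yield $\exp D\le\ell^{\df(\chi)}$, so this naive transfer loses a factor of $|Z_\ell|=(q-\eps)_\ell$. This is the main obstacle. I would attack it with the sharper inequality of the type in Proposition~\ref{prop:unip GL}, namely
$$(q-\eps)_\ell\,\chi(1)_\ell\,\exp(D)\quad\text{divides}\quad|\tilde G|_\ell .$$
I would prove it for all $\ell$-blocks of $\tilde G$ by rerunning the reduction in Proposition~\ref{prop:dagger class}. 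The Bonnaf\'e--Dat--Rouquier reduction together with induction over Levi subgroups $\prod\GL_{m_i}(\eps^{?}q^{k_i})$ should work, since each factor contributes its own central torus factor. For the isolated case (unipotent blocks up to a linear twist), Proposition~\ref{prop:unip GL} already gives the inequality when $n$ is not an $\ell$-power, and $\ell\nmid n$ guarantees that $n$ is not a power of $\ell$ once $n>1$. For non-isolated $s$, the Levi factor $\bL^F$ has centre containing at least as large an $\ell$-part as $\bZ(\tilde G)$, and I expect the extra factor $(q-\eps)_\ell$ to be absorbed there.

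Given the sharpened inequality, the final descent is the Clifford argument used at the end of Proposition~\ref{prop:unip GL}. A character of $B$ lies under some $\tilde\chi$ of $\tilde G$ with $\tilde\chi(1)_\ell$ dividing $\chi(1)_\ell\cdot|\tilde G:G|_\ell$. Moreover $\exp(D)\le\exp(\tilde D)$, and $|\tilde G|_\ell=|G|_\ell\,(q-\eps)_\ell$. These three facts combine to give $\exp(D)\le|G|_\ell/\chi(1)_\ell$, which is $(\ddagger)$, and hence also $(\ddagger^\star)$.
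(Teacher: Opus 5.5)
Your route (prove a sharpened inequality on $\tilde G=\GL_n(\eps q)$, then descend by Clifford theory) differs from the paper's. However, its key input is false as stated. The inequality $(q-\eps)_\ell\,\tilde\chi(1)_\ell\,\exp(\tilde D)\mid|\tilde G|_\ell$ does not hold for all $\ell$-blocks of $\tilde G$, and it fails in exactly the non-isolated case you only \emph{expect} to work.

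Take $\tilde B\subseteq\cE_\ell(\tilde G,s)$ with $\bC_{\tilde G^*}(s)$ a Coxeter torus $T^*$ (e.g.\ $\GL_1(q^n)$ when $\eps=1$). Since $\ell\nmid n$, we have $|T|_\ell=(q-\eps)_\ell$, so the defect group of $\tilde B$ is $T_\ell=\bZ(\tilde G)_\ell$. Every $\tilde\chi=\pm R_T^{\tilde G}(\theta)\in\Irr(\tilde B)$ then has $\ell^{\df(\tilde\chi)}=|T|_\ell=\exp(\tilde D)$. Hence $(q-\eps)_\ell\,\tilde\chi(1)_\ell\,\exp(\tilde D)=(q-\eps)_\ell\,|\tilde G|_\ell$, which does not divide $|\tilde G|_\ell$. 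Concretely, for $\GL_2(q)$ with $2<\ell\mid q-1$, the cuspidal characters of degree $q-1$ give $(q-1)_\ell^3\nmid(q-1)_\ell^2=|\GL_2(q)|_\ell$. Here the Levi subgroup is abelian and the defect group \emph{is} its central $\ell$-part, so nothing is left over to absorb the factor $(q-\eps)_\ell$.

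The gap closes with a case distinction you did not make.
\begin{itemize}
\item If the block $\tilde B$ of $\tilde\chi$ is of this Coxeter type, the block $B$ it covers has defect group conjugate to $\tilde D\cap G=\bZ(\tilde G)_\ell\cap G=1$. So $(\ddagger)$ is trivial for $B$.
\item Otherwise, after the Bonnaf\'e--Rouquier reduction, you work in $\prod_i L_i$ with $L_i=\GL_{n_i}((\eps q)^{e_i})$. If there are at least two factors, pick $j$ with the defect group of $b_j$ realising the maximal exponent, and apply Proposition~\ref{prop:unip class} to $L_j$. The extra factor $(q-\eps)_\ell$ then comes from $\psi_k(1)\mid|L_k:\bZ(L_k)|$ for some $k\ne j$.
\item If there is a single factor with $n_1\ge2$, then $\ell\nmid n_1$ and Proposition~\ref{prop:unip GL} applies.
\end{itemize}
This is essentially the case analysis underlying the paper's proof (compare Corollary~\ref{cor:SLn}). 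The paper, however, applies Bonnaf\'e--Dat--Rouquier to $G$ itself and descends from $\prod_i\GL_{n_i}(q^{e_i})$ to $\bL^F=(\prod_i\GL_{n_i}(q^{e_i}))\cap\SL_n(q)$. There the Coxeter case is harmless, because $\bL^F$ is then an $\ell'$-group.

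Two minor slips as well. The Clifford fact needed in your last step is $\chi(1)\mid\tilde\chi(1)$, not the upper bound on $\tilde\chi(1)$ you quote. And over $B$ the group $G\times Z_\ell$ has the single block $B\otimes\Irr(Z_\ell)$, not blocks $B\otimes\{\lambda\}$.
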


\begin{proof}
Again, we just give the details in case $\eps=1$, so $G=\SL_n(q)$; the unitary
group case is obtained by replacing $q$ by $-q$ throughout. Our proof now
essentially follows the same steps as for Proposition~\ref{prop:dagger class}.
Let $B$ be an $\ell$-block of $G$ and let $s\in G^*=\PGL_n(q)$ be a semisimple
$\ell'$-element such that $\Irr(B)\subseteq\cE_\ell(G,s)$. Let $\bL^*\le\bG^*$
be an $F$-stable Levi subgroup with $\bC_{\bG^*}^\circ(s)\le\bL^*$, minimal
with this property and let $b$ be the unipotent $\ell$-block of its dual~$\bL^F$
related to $B$ via the Bonnaf\'e--Dat--Rouquier construction \cite{BDR}. Again
by \cite[Prop.~14.20]{MT} the component group of~$\bC_{\bG^*}(s)$ has order
prime to $\ell$ and thus we may deal with $\bL^F$. Now
$$\bL^F=\big(\GL_{n_1}(q^{e_1})\times\cdots\times\GL_{n_r}(q^{e_r})\big)
        \cap\SL_n(q)$$
for suitable $n_i,e_i\ge1$ with $n=\sum_i n_ie_i$. Then $b$ is covered by a
block $\hat b=\hat b_1\boxtimes\cdots\boxtimes\hat b_r$ of
$\prod_i \GL_{n_i}(q^{e_i})$,
with unipotent blocks $\hat b_i$ of $\GL_{n_i}(q^{e_i})$. All of the $\hat b_i$
satisfy $(\ddagger)$ by Proposition~\ref{prop:unip class}. Since $\ell$ does
not divide $n$, there is some $n_i$ that is prime to~$\ell$,
so the corresponding block~$\hat b_i$ satisfies the stronger estimate in
Proposition~\ref{prop:unip GL}, and since $|\bZ(\GL_n(q))|=q-1$ we see that
$b$ must satisfy $(\ddagger)$. Here note that $(q^{e_i}-1)_\ell\ge (q-1)_\ell$.
Thus $B$ satisfies $(\ddagger)$, and hence also $(\ddagger^\star)$ as
$|\bZ(G)|$ is prime to $\ell$.
\end{proof}

We now turn to the exceptional groups of Lie type:

\begin{prop}   \label{prop:exc good}
 Condition~$(\ddagger)$ holds for all unipotent $\ell$-blocks of quasi-simple
 groups $S$ of exceptional Lie type for all good primes $\ell$.
\end{prop}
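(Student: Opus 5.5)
We will follow the strategy of Proposition~\ref{prop:unip class}, replacing the classical-group combinatorics by the generic structure of $d$-Harish-Chandra theory. Let $B$ be a unipotent $\ell$-block of $S=\bG^F$ with $\ell$ good, and let $d$ be the order of $q$ modulo~$\ell$. By the Cabanes--Enguehard description \cite{CE94} there is a unipotent $d$-cuspidal pair $(\bL,\la)$ such that Sylow $\ell$-subgroups of $\bC^\circ_\bG([\bL,\bL])^F$ are defect groups of~$B$. Moreover, $\chi\in\cE(S,t)$, for $t$ an $\ell$-element of $S^*$, lies in $B$ exactly when its Jordan correspondent $\chi_t\in\cE(\bC_{S^*}(t),1)$ lies in a $d$-series $(\bL_t^*,\la_t^*)$ compatible with $(\bL,\la)$. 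As before, $\chi$ and $\chi_t$ have the same defect. If $\ell$ does not divide $|W|$, Proposition~\ref{prop:abelian} already settles the claim. So we may assume $\ell\in\{5,7\}$ (or $\ell=3$ where it is good, e.g.\ for $G_2$, $\tw3D_4$, $F_4$ with $3$ good only when not dividing, so in practice $\ell=5,7$ for $E_6,E_7,E_8$, and $\ell=5$ for $F_4$).

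First step: list the relevant $d$ and the Sylow $d$-tori. For $\ell$ good and dividing $|W|$, the integer $d$ is small, and the relative Weyl group $W_d=N(\bT_d)/\bC(\bT_d)$ of a Sylow $d$-torus is a complex reflection group whose order has $\ell$-part equal to $\ell$ (never $\ell^2$ for $\ell\ge5$ in exceptional types). Hence the defect group $D$, contained in $\bT_d^F.W_d$ (as in \cite[Cor.~25.17]{MT}), has $\exp(D)\le\ell\cdot\ell^r$, where $\ell^r=(\Phi_d(q))_\ell$. When $B$ has non-maximal defect, $\bC^\circ_\bG([\bL,\bL])$ is a smaller-rank group (typically of type $A$ or a torus) whose $\ell$-subgroups are abelian, and Corollary~\ref{cor:rob} or the Brauer height zero argument of Corollary~\ref{cor:ab} applies. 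Thus we may restrict to the principal block or blocks of maximal defect with non-abelian $D$, where we only need $\df(\chi)\ge r+1$ for all $\chi\in\Irr(B)$.

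Second step: bound the defect of $\chi$ from below. Write $\chi_t$ for the unipotent Jordan correspondent in $C=\bC_{S^*}(t)$. The Sylow $\ell$-subgroup of $C$ contains the Sylow $\Phi_d$-part $\ell^{ra}$, with $a$ the $\Phi_d$-multiplicity of $C$ (at least $a(\bL)$, which is at least~2 in the non-abelian case). Unipotent character degrees are products of cyclotomic polynomials over a rational constant, so $\chi_t(1)_\ell=\ell^{r\,m}\cdot c_\ell$, where $m$ is the $\Phi_d$-multiplicity in the degree polynomial and $c$ is the bad-prime-free denominator. The key is to show $a-m\ge 2$ whenever $\chi_t$ lies in a $d$-series of non-abelian type, or alternatively $a-m\ge1$ together with an extra factor $\Phi_{d\ell}$ in $|C|$ not appearing in the degree. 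This is a finite check. Using the tables of unipotent degrees and $d$-series (e.g.\ \cite[\S4]{GM20}), one sees that degrees with $a-m\le 1$ occur only for characters that are $d$-cuspidal in a Levi of $\Phi_d$-rank $1$, hence in blocks with cyclic defect.

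The main obstacle will be the case where $t\neq1$ and $C$ is a disconnected or non-split group such as $\bC_{S^*}(t)$ containing $\GL_1(q^{d\ell})$-type tori, analogous to Remark~\ref{rem:GL}. There we must check that the $\ell$-power $\ell^{r+1}$ coming from $\Phi_{d\ell}(q)_\ell=\ell$ compensates for the loss of $\Phi_d$-rank. I would first try to reduce via the Bonnaf\'e--Rouquier/BDR reduction to a Levi subgroup, where Proposition~\ref{prop:unip class} applies. Only genuinely isolated $t$ (which for $\ell\ge5$ good essentially do not occur, since isolated elements have orders divisible only by bad primes) would remain, and I expect this observation to dispose of them.
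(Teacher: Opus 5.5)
\textbf{Verdict: there is a genuine gap.} Up to the final step your plan follows the paper, but the inequality that carries the whole proposition is never proved.

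\textbf{Where you agree with the paper.} You use Cabanes--Enguehard with defect-preserving Jordan decomposition, and Proposition~\ref{prop:abelian} to restrict to good $\ell$ dividing $|W|$. You then reduce to non-abelian defect; the paper does this via \cite[Thm~1.2]{KM}, which leaves only the principal block $B_0$ with $\ell\mid q-\eps$, i.e.\ $d\in\{1,2\}$. Finally $\exp(D)\le\ell(q-\eps)_\ell$, so one needs $\df(\chi)\ge 1+\nu_\ell(q-\eps)$ for all $\chi\in\Irr(B_0)$.

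Your list of cases is off. The relevant ones are $\ell=5$ for $E_6,\tw2E_6,E_7$ and $\ell=7$ for $E_7,E_8$. Note that $7\nmid|W(E_6)|$, $5$ is bad for $E_8$, $5\nmid|W(F_4)|$, and $3$ is bad for $G_2$ and $F_4$.

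\textbf{First problem: the claimed outcome of your finite check is false.} Take $G=E_7(q)$, $\ell=7$, $7\mid q-1$. Let $t\in G^*$ be a $7$-element of order $7(q-1)_7$ in a maximal torus $T$ of order $q^7-1$, a Coxeter torus of a Levi subgroup of type $A_6$. Then $\bC_{G^*}(t)=T$. Hence $a-m=1$, and $\chi_t=1_T$ is $d$-cuspidal in a Levi of $\Phi_d$-rank~$1$. Yet $\chi=\pm R_T^G(\hat t)$ lies in the principal block, whose defect groups are non-abelian. You are conflating the block of $\chi_t$ in $C$ with the block of $\chi$ in $G$.

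\textbf{Second problem: the fallback does not apply.} Bonnaf\'e--Dat--Rouquier reduces the $\ell'$-part $s$ of the semisimple label, and here $s=1$. The $\ell$-elements $t$ are already handled by the Jordan decomposition you have. Here every $\bC_{\bG^*}(t)$ is a Levi subgroup, so the remark about isolated elements is beside the point.

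Nor can you finish by quoting $(\ddagger)$ for the block of $\chi_t$ in $C$. That only bounds $\df(\chi_t)$ below by the exponent of a defect group in $C$. This may be just $(q-\eps)_\ell$, for instance when $\ell\nmid|W_C|$, while $\exp(D)$ may be $\ell(q-\eps)_\ell$. Moreover $C$ typically has exceptional components, so Proposition~\ref{prop:unip class} is not available anyway.

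\textbf{What is missing is a direct count, which is how the paper concludes.}
\begin{enumerate}[(i)]
\item By the description of $B_0$ in \cite{CE94}, $\chi_t$ lies in the principal $d$-series of $C$. So by Brou\'e--Malle--Michel its degree polynomial is prime to $\Phi_d=q-\eps$; this is your $m=0$.
\item Since $|W|_\ell=\ell$, every $\ell$-element $t$ of $\bG^{*F}$ lies either in a Sylow $d$-torus of order $(q-\eps)^{\mathrm{rk}}$, or in a maximal torus of order $(q^\ell-\eps)(q-\eps)^{\mathrm{rk}-\ell}$.
\item In the first case $\df(\chi)\ge 2\nu_\ell(q-\eps)\ge 1+\nu_\ell(q-\eps)$. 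In the second case $\df(\chi)\ge\nu_\ell(q^\ell-\eps)=1+\nu_\ell(q-\eps)$.
\end{enumerate}
The second case is exactly where the extra factor $\Phi_{d\ell}$ you anticipated comes from. The $E_7$ example above shows that this case is sharp and cannot be avoided.
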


\begin{proof}
Let $\bG$ be simple of simply connected type with Frobenius endomorphism $F$
such that $S$ is a central quotient of $G=\bG^F$, which is possible since
by Proposition~\ref{prop:spor} we may assume that $S$ does not have exceptional
covering groups. By Proposition~\ref{prop:abelian} we are left to discuss
primes $\ell$ that are good for $\bG$ but do divide the order of the Weyl
group. The only cases are then $\ell=5$ for $G$ of types $E_6$, $\tw2E_6$ or
$E_7$, and $\ell=7$ for types~$E_7$ and~$E_8$. By Corollary~\ref{cor:ab} we may
then restrict to characters of $G$ in blocks with non-abelian defect groups.

For the primes $\ell$ under consideration, the only unipotent $\ell$-block with
non-abelian defect is the principal block $B_0$ in the case that $\ell|(q-1)$
when $G\ne\tw2E_6(q)$, or $\ell|(q+1)$ when $G\ne E_6(q)$ (see, e.g.,
\cite[Thm~1.2]{KM}).
Let $\eps\in\{\pm1\}$ with $\ell|(q-\eps)$. A Sylow $\ell$-subgroup $D$ of $G$
is then an extension of a homocyclic group of exponent $(q-\eps)_\ell$ by a
cyclic group of order~$\ell$, by the same reference, hence has exponent
$\exp(D)\le\ell(q-\eps)_\ell$. Let $\chi\in\Irr(B_0)$, so there is an
$\ell$-element $t\in\bG^{*F}$ with $\chi\in\cE(G,t)$. Now by using the
description of the principal block in \cite[Thm]{CE94} the Jordan correspondent
of $\chi$ has degree polynomial not divisible by $q-\eps$. Furthermore, any
$\ell$-element $t\in\bG^{*F}$ lies in a Sylow torus of $\bG^{*F}$ of
order $(q-\eps)^r$, or in a torus of order $(q^\ell-\eps)(q-\eps)^{r-\ell}$,
where $r$ is the rank of~$\bG$. Thus by the degree formula for Jordan
decomposition, $\chi$ has defect at least
$2\nu_\ell(q-\eps)\ge 1+\nu_\ell(q-\eps)$, respectively
$\nu_\ell(q^\ell-\eps)=1+\nu_\ell(q-\eps)$, hence at least $\nu_\ell(\exp(D))$,
as desired.
\end{proof}

The above result does not extend to bad primes:

\begin{exmp}   \label{exmp:G2}
Let $G=G_2(q)$ with $4|(q-1)$ and $\ell=2$. Then the cuspidal unipotent
character of $G$ denoted $G_2[1]$ lies in the principal $2$-block $B_0$ of~$G$
and has defect~3, while Sylow $2$-subgroups of $G$ have exponent at least
$(q-1)_2$, larger than $2^3$ for example when $q=17$, so $B_0$ does not, in
general, satisfy $(\ddagger)$ (but Conjecture~A still holds in this case, see
Proposition~\ref{prop:small exc} below).
\end{exmp}

\begin{prop}   \label{prop:exc 5}
 Condition~$(\ddagger)$ holds for all $\ell$-blocks of quasi-simple
 groups $S$ of exceptional Lie type for all primes $\ell\ge7$.
\end{prop}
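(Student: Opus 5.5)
By Proposition~\ref{prop:abelian} we only need to handle $\ell=7$. The Weyl group orders of $E_7$ and $E_8$ are divisible by~$7$, while those of $G_2$, $F_4$, $E_6$, ${}^3D_4$ and the Suzuki and Ree types are not. So it suffices to treat $\ell=7$ for $S$ of type $E_7$ or $E_8$. By Proposition~\ref{prop:spor} we may assume $S$ has no exceptional cover. Since $|\bZ(E_7)|$ divides $2$, we may then pass to the simply connected group $G=\bG^F$, and $(\ddagger)$ and $(\ddagger^\star)$ coincide there. By Corollary~\ref{cor:ab} it is enough to consider $7$-blocks $B$ with non-abelian defect groups.

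The plan is to reduce to unipotent blocks and then invoke Proposition~\ref{prop:exc good}, since $7$ is good for $E_7$ and $E_8$. Let $B$ lie in $\cE_\ell(G,s)$ for a semisimple $\ell'$-element $s\in G^*$. The argument of Proposition~\ref{prop:dagger class} applies here: by \cite[Prop.~14.20]{MT} the component group of $\bC_{\bG^*}(s)$ has order dividing $o(s)$, hence prime to~$7$. Therefore, if $s$ is not isolated, Bonnaf\'e--Dat--Rouquier \cite{BDR} together with Lemma~\ref{lem:updown} reduces the problem to a block of $\bL^F$ for a proper Levi subgroup~$\bL$.

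Such a Levi subgroup is, up to central torus factors, a product of groups of type $A$, $D$, $E_6$ and $E_7$, possibly over extension fields. We then proceed by induction on rank. For the $E_6$ and $E_7$ factors we use the induction hypothesis or the abelian Sylow case. The classical factors are handled by Propositions~\ref{prop:dagger class}, \ref{prop:unip GL} and \ref{prop:dagger SL}. One subtlety concerns $\SL_n$-type components where $7\mid(n,q-\eps)$; this forces $n\ge7$ inside $E_7$ or $E_8$, so only the $A_7$ Levi in $E_8$ (or $A_6$ in $E_7$ and $E_8$) can occur. In that case we argue directly with the full $\GL$-type Levi rather than its $\SL$ part, since in $\bG$ the torus factor supplies the central contribution. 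We also need the elementary observation that $(\ddagger)$ passes to direct products and to central extensions by a torus whose $\ell$-part lies in the defect group. The exponent of a defect group of a product is the maximum of the factor exponents, while the defect of a character is the sum of the factor defects, so this inequality is preserved.

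If $s$ is isolated, then since $7$ exceeds every coefficient of the highest root, the isolated elements of order prime to $7$ have centralisers of types such as $A_1E_6$, $A_7$, $A_2E_6$, $A_8$, $D_8$, $A_4A_4$, $E_7A_1$, and so on. We intend to use Enguehard's description \cite{En08}, which matches blocks in $\cE_\ell(G,s)$ with unipotent blocks of $\bC_{\bG^*}(s)$ having the same heights and isomorphic defect groups, and then apply Propositions~\ref{prop:unip class} and~\ref{prop:exc good} to those unipotent blocks. Unipotent blocks of $G$ itself (the case $s=1$) are covered directly by Proposition~\ref{prop:exc good}.

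The main obstacle is the isolated case with a centraliser of type $A_n$ with $n\ge6$, namely $A_7\le E_7$ and $A_8\le E_8$ when $7\mid q-\eps$. There the unipotent blocks of a group of type $\SL_8$ or $\SL_9/\mu_3$ might fall under the critical case of Example~\ref{gunterexmp}. I would first check whether $7$ divides $(n+1,q-\eps)$ for these types; since $8$ and $9$ are prime to~$7$, Proposition~\ref{prop:unip GL} applies with $n$ not a power of~$\ell$, and this should resolve the difficulty. The remaining work is to verify that the height-preserving correspondence in \cite{En08} really lands in a group where Proposition~\ref{prop:unip GL} applies.
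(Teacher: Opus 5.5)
Your outline matches the paper's proof. Both reduce to $\ell=7$ and types $E_7,E_8$, discard abelian defect groups, and treat unipotent blocks by Proposition~\ref{prop:exc good}; quasi-isolated $s$ (centralisers $A_7$, $A_8$, $A_7A_1$, $D_8$, $E_7A_1$) go through Jordan decomposition plus Propositions~\ref{prop:dagger class} and~\ref{prop:dagger SL} using $7\nmid 8,9$, and the remaining $s$ go through Bonnaf\'e--Dat--Rouquier and induction, with the central torus contribution rescuing the $A_6(\eps q)$-type Levis. One slip: the $A_7$ Levi of $E_8$ is not a critical case since $7\nmid 8$; the delicate Levis are exactly those with an $A_6(\eps q)$ factor ($A_6$ in $E_7$; $A_6$ and $A_6A_1$ in $E_8$), where the paper checks (by a {\sf Chevie} computation for $E_8$) that $q-\eps$ divides $|\bZ(\bL^F)|$.
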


\begin{proof}
By Proposition~\ref{prop:abelian} we may assume $\ell=7$ and $S$ is of type
$E_7$ or $E_8$. Let $\bG$ be a simple algebraic group with a Frobenius map~$F$
with respect to an $\FF_q$-structure such that $S$ is a central quotient of
$G=\bG^F$. As $|\bZ(\bG^F)|\le2$ it suffices by Lemma~\ref{lem:updown} to
consider $S=G$. Let $B$ be a 7-block of $G$ and $s\in\bG^{*F}$ a semisimple
$7'$-element with $\Irr(B)\subseteq\cE_7(G,s)$, and $(\bL,\la)$ be the
associated $d$-cuspidal pair where $d$ is the order of $q$ modulo~7
\cite{CE99}.  Again by
Proposition~\ref{prop:abelian} we may assume $B$ has non-abelian defect groups
and thus by \cite[Lemma~4.16]{CE99}, we have $d\in\{1,2\}$ and the relative
Weyl group of $(\bL,\la)$ has order divisible by~7. By the same reference, in
all of these cases, the defect groups of $B$ are extensions of a homocyclic
group of exponent $(q-\eps)_7$ by a group of order~7, hence have exponent at
most $7(q-\eps)_7$, where $\eps\in\{\pm1\}$ is such that $q\equiv\eps\pmod7$.

By Proposition~\ref{prop:exc good}, the claim holds if $B$ is unipotent.
Assume $B$ is
quasi-isolated non-unipotent. Then $\bC_{\bG^*}(s)$ is of type $A_7$ in
$G=E_7(q)$, respectively of type $A_8,A_7+A_1,D_8$ or $E_7+A_1$ in $G=E_8(q)$.
In either case the Sylow 7-subgroups of $\bC_{\bG^*}(s)$ have exponent bounded
by the same quantity as for the defect groups of $B$, and all of their
$7$-blocks satisfy $(\ddagger)$ by
Propositions~\ref{prop:dagger class} and~\ref{prop:dagger SL}, respectively
by induction. As Jordan decomposition gives a height preserving bijection
between $\Irr(B)$ and some block of $\bC_{\bG^*}(s)$, this shows our claim in
these cases.

So finally assume $B$ is not quasi-isolated. Then $\bC_{\bG^*}(s)$ has type
$A_6$ in type $E_7(q)$, respectively type $A_6,A_6+A_1,A_7,D_7$ or $E_7$
in $E_8(q)$. By \cite{BDR} there is a (quasi-isolated) block $b$ of
$\bC_{\bG^*}(s)$ with the same relevant numerical invariants as $B$.
Let first $G=E_7(q)$. Then $\bC_{\bG^*}(s)^F=A_6(\eta q)(q-\eta)$ for some
$\eta\in\{\pm1\}$. If $\eta\ne\eps$ then all 7-blocks of $\bC_{\bG^*}(s)^F$
satisfy $(\ddagger)$ by Proposition~\ref{prop:dagger SL} in conjunction with
Lemma~\ref{lem:updown}. If $\eta=\eps$ then since
$|\bZ(\bC_{\bG^*}(s)^F)|=q-\eps$, the defects of characters in $b$ are larger
by $\nu_7(q-\eps)$ than in the covered block of $\SL_7(q)$, and so
Proposition~\ref{prop:dagger class} again gives $(\ddagger)$ for $b$ and
hence for $B$.
Now let $G=E_8(q)$. If $\bC_{\bG^*}(s)$ has no factor $A_6(\eps q)$ then $b$
satisfies $(\ddagger)$ by Proposition~\ref{prop:dagger class}
and Lemma~\ref{lem:updown}, respectively by the case of $E_7$ just treated.
If $\bC_{\bG^*}(s)$ has a factor $A_6(\eps q)$ then again $q-\eps$ divides
$|\bZ(\bC_{\bG^*}(s)^F)|$ (by a {\sf Chevie}-computation \cite{MChev})
and so the claim follows as before.
\end{proof}

We can now show the main result of this section:

\begin{proof}[Proof of Theorem~\ref{thm:ell>7}]
Let $S$ be as in Theorem~\ref{thm:ell>7}. By Proposition~\ref{prop:spor} we may
assume $S$ has no exceptional covering group. Our assumptions then imply that
$\ell\nmid |\bZ(S)|$, and hence it suffices to prove Condition $(\ddagger)$
for $S$. Furthermore, by Proposition~\ref{prop:exc 5}, $S$ is not of
exceptional type. So there exists a simple classical algebraic group $\bG$ of
simply connected type over $\overline{\FF}_q$ with a Frobenius map $F$ such
that $S$ is a central quotient of $G:=\bG^F$. By our assumptions on $\ell$,
$|\bZ(G)|$ is prime to $\ell$, so by Lemma~\ref{lem:updown} it suffices to
consider $\ell$-blocks of $G$. If $\bG$ is of type $B_n$, $C_n$ or $D_n$ then
$G$ is as in Proposition~\ref{prop:dagger class} and the claim follows.

So we have that $G=\SL_n(\eps q)$. Again by
Proposition~\ref{prop:dagger class}, all $\ell$-blocks of
$\tilde G:=\GL_n(\eps q)$ satisfy $(\ddagger)$. If $\ell$ does not divide
$q-\eps=|\tilde G:G|$, the same holds for all $\ell$-blocks of~$G$ by
Lemma~\ref{lem:updown}. Finally, if $\ell|(q-\eps)$ then by assumption $\ell$
does not divide $n$, in which case our claim is contained in
Proposition~\ref{prop:dagger SL}.
\end{proof}

\subsection{Further cases}
We now consider some of the cases left open by Theorem~\ref{thm:ell>7}.
First, inspecting the proof of Proposition~\ref{prop:dagger SL} we can further
restrict the type of obstructing blocks for $\SL_n(\eps q)$:

\begin{cor}   \label{cor:SLn}
 Let $G=\SL_n(\eps q)$ with $\eps\in\{\pm1\}$, $n\ge3$, and let
 $2<\ell|(n,q-\eps)$. Then an $\ell$-block $B$ of $G$
 satisfies~$(\ddagger^\star)$ unless possibly when
 $\Irr(B)\subseteq\cE_\ell(G,s)$ for an $\ell'$-element
 $s\in G^*=\PGL_n(\eps q)$ with either
 $\bC_{G^*}^\circ(s)\cong\GL_r((\eps q)^{n/r})/C_{q-\eps}$ for some
 $\ell$-power $r|n$, or
 $$\bC_{G^*}^\circ(s)\cong(\GL_{n_1}(q^{e_1})\times\GL_{n_2}(q^{e_2}))/C_{q-\eps}$$
 for some $\ell$-powers $n_1,n_2>1$ with $n=n_1e_1+n_2e_2$.
\end{cor}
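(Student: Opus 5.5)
We retrace the proof of Proposition~\ref{prop:dagger SL}, now with $\ell\mid(n,q-\eps)$, and isolate exactly where the hypothesis $\ell\nmid n$ was used. We treat $\eps=1$; the unitary case is the same with $q$ replaced by $-q$. Let $B$ be an $\ell$-block of $G=\SL_n(q)$ with $\Irr(B)\subseteq\cE_\ell(G,s)$ for a semisimple $\ell'$-element $s\in G^*=\PGL_n(q)$. Choose a minimal $F$-stable Levi subgroup $\bL^*\ge\bC_{\bG^*}^\circ(s)$. By \cite[Prop.~14.20]{MT} the component group of $\bC_{\bG^*}(s)$ has order dividing $o(s)$, so it is prime to $\ell$. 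Then \cite{BDR} together with Lemma~\ref{lem:updown}(a) reduces $(\ddagger^\star)$ for $B$ to the corresponding statement for a block $b$ of $\bL^F$. Here
$$\bL^F=\bigl(\GL_{n_1}(q^{e_1})\times\cdots\times\GL_{n_r}(q^{e_r})\bigr)\cap\SL_n(q),\qquad n=\sum_i n_ie_i,$$
and $b$ is covered by $\hat b=\hat b_1\boxtimes\cdots\boxtimes\hat b_r$ with each $\hat b_i$ unipotent. Since $\bC^\circ_{\bG^*}(s)$ has the same shape modulo $C_{q-1}$, the pairs $(n_i,e_i)$ are precisely the data in the statement.

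Next we compare $b$ with $\hat b$, taking the central quotient into account. Since $\ell\mid q-1$, the block $\hat b_i$ is the principal block of $\GL_{n_i}(q^{e_i})$, and $(q^{e_i}-1)_\ell\ge(q-1)_\ell$. By Proposition~\ref{prop:unip class} each $\hat b_i$ satisfies $(\ddagger)$. If some $n_i$ is not a power of $\ell$, Proposition~\ref{prop:unip GL} gives the stronger estimate for that factor, namely an extra factor $(q^{e_i}-1)_\ell\ge(q-1)_\ell=|\bZ(\GL_n(q))|_\ell$ in the defect. This extra factor absorbs the loss in passing from $\prod_i\GL_{n_i}(q^{e_i})$ to $\bL^F$. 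That passage goes down by index $(q-1)$, so defect groups shrink and character degrees change only by $\ell'$-factors or divide covering degrees. It also absorbs the passage from $(\ddagger)$ to $(\ddagger^\star)$, where we divide by $\bZ(G)_\ell\le\bZ(\GL_n)_\ell$. By Remark~\ref{rem:GL}, the case $n_i=1$ with $e_i$ arbitrary also counts as ``not an $\ell$-power with the exception'' only when the factor is $\GL_1(q^{e_i})$ and $e_i$ is an $\ell$-power; we handle that case through the same degree computation. So $(\ddagger^\star)$ holds unless every $n_i$ is a power of $\ell$.

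It remains to reduce the number of factors. Suppose there are at least three factors with every $n_i>1$ a power of $\ell$, or at least two such factors together with additional factors having $n_i=1$. Then we use the surplus from the weaker inequality $(\ddagger)$ in the several factors. The exponent of a Sylow $\ell$-subgroup of a product is the maximum over the factors, while defects add. So the defect contributed by any second factor, which is at least $\nu_\ell(q-1)$, supplies the missing $(q-1)_\ell$. Carrying this out shows that $(\ddagger^\star)$ can fail only when there is a single factor $\GL_r(q^{n/r})$ with $r$ a power of $\ell$ (this includes $r=1$, the Coxeter case of Remark~\ref{rem:GL}), or exactly two factors with $n_1,n_2>1$ both powers of $\ell$.

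The main obstacle is this last bookkeeping step. We must check that the exponent of a defect group of $b$, taken modulo $\bZ(G)_\ell$, is genuinely bounded by the maximum of the factor exponents. We must also check that a second factor with $n_i=1$ really contributes a full $(q-1)_\ell$ to the defect after intersecting with $\SL_n$ rather than having it cancelled by the determinant condition. For this we would first try a direct computation of $|\bL^F|_\ell$ versus $\prod_i|\GL_{n_i}(q^{e_i})|_\ell/(q-1)_\ell$.
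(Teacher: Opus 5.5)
Your route is the paper's: reduce via \cite{BDR} to a block $b$ of $\bL^F$, pass to the covering block $\hat b=\hat b_1\boxtimes\cdots\boxtimes\hat b_r$ of $\hat L=\prod_i\GL_{n_i}(q^{e_i})$, and use that defect-group exponents take the maximum over the factors while defects add. However, the bookkeeping, which is the whole content of the corollary, goes wrong.

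\textbf{Double counting.} In your second paragraph you spend the single extra factor $(q^{e_i}-1)_\ell\ge(q-1)_\ell$ from Proposition~\ref{prop:unip GL} twice. Restricting from $\hat L$ to $\bL^F$ (index $q-1$) can lower defects by $\nu_\ell(q-1)$. Because now $\ell\mid(n,q-1)$, $(\ddagger^\star)$ additionally requires a factor $|\bZ(G)|_\ell>1$, which never arose in Proposition~\ref{prop:dagger SL}. So relative to $(\ddagger)$ for $\hat b$ you must gain up to $(q-1)_\ell^2$, not $(q-1)_\ell$. Your own count is inconsistent: by it, a second factor already supplies the one missing $(q-1)_\ell$, so $m=2$ could never be exceptional; it enters your final list only because it is in the statement. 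The paper gets the second factor as follows. It lets $\hat b_1$ realise $\exp(\hat D)$ and collects $(q-1)_\ell$ from each $\hat b_i$ with $i\ge2$, since every defect there is at least $\nu_\ell(q^{e_i}-1)$, as $\hat\psi_i(1)$ divides $|\GL_{n_i}(q^{e_i}):\bZ(\GL_{n_i}(q^{e_i}))|$. This improves to $(q-1)_\ell^2$ when that $n_i$ is not an $\ell$-power.

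\textbf{Factors with $n_i=1$.} Your treatment misreads Remark~\ref{rem:GL}. A factor $\GL_1(q^e)$ never satisfies the stronger estimate: each of its characters has defect exactly $\nu_\ell(q^e-1)=\nu_\ell(\exp)$, for every $e$. It is an $\ell$-power factor ($1=\ell^0$) and contributes only $(q-1)_\ell$ when it is not the maximal one. Hence your case analysis omits $m=2$ with one $n_i=1$. This configuration genuinely fails, so no bookkeeping can produce your final list, which copies the printed condition $n_1,n_2>1$.

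Here is a counterexample. Take $\ell=3$, $q=19$, $n=9$, so $|\bZ(G)|=9$. Let $s$ be the image of $\tilde s=\diag(1,1,1)\oplus M$, where $M\in\GL_6(q)$ is multiplication by some $\beta\in\FF_{q^6}^\times$ of order $7$. Then $\bC^\circ_{G^*}(s)\cong(\GL_3(q)\times\GL_1(q^6))/C_{q-1}$.
\begin{enumerate}[\rm(i)]
 \item Let $\chi$ be the restriction to $G$ of $\pm R_T^{\GL_9(q)}(\vartheta)$, where $T\cong\FF_{q^3}^\times\times\FF_{q^6}^\times$ and $\vartheta$ corresponds to $\tilde st$ with $t$ of order $27$ in both factors.
 \item $\chi$ is irreducible, since no scalar multiple of $\tilde st$ is conjugate to it. It lies in $\cE_3(G,s)$.
 \item $\chi(1)_3=|\GL_9(q)|_3/|T|_3=3^{22-6}=3^{16}$ and $|G|_3=3^{20}$, so $(|G:\bZ(G)|/\chi(1))_3=3^{20-2-16}=9$.
 \item By Kn\"orr's theorem (applied from $\GL_9(q)$), a defect group of the block of $\chi$ is a Sylow $3$-subgroup of $(\GL_3(q)\times\GL_1(q^6))\cap G$. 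It contains a conjugate of $g=\diag(c,1,1)\oplus y$, where $y\in\FF_{q^6}^\times$ has order $27$ and $c\in\FF_q^\times$ is the inverse of its norm.
 \item Since $g^9=I_3\oplus y^9I_6$ is not scalar, $g\bZ(G)$ has order $27>9$, so $(\ddagger^\star)$ fails.
\end{enumerate}

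Consistently with this, the paper's proof ends with ``unless $m\le2$ and moreover all $n_i$ are powers of $\ell$'', which does not exclude $n_i=1$. Only the configurations where every factor is a $\GL_1$ are harmless, because their defect groups are abelian; argue as in Corollary~\ref{cor:ab}.
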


\begin{proof}
Let $\eps=1$ (again the case $\eps=-1$ is entirely analogous). In the notation
and by the arguments in the proof of Proposition~\ref{prop:dagger SL},
$B$ satisfies $(\ddagger)$ unless
$\bC_{\bG^*}^\circ(s)^F\cong\prod_{i=1}^m\GL_{n_i}(q^{e_i})/\bZ(\GL_n(q))$ with
$n=\sum_i n_ie_i$ and all $n_i>1$ powers of~$\ell$.  Now note that the exponent
of the defect group of $\hat b$ is the maximum of those of the~$\hat b_i$,
while the defects of characters of the $\hat b_i$ add up. Since
any defect in a unipotent block of $\GL_{n_i}(q^{e_i})$ is at least
$(q-1)_\ell\le|\bZ(\GL_{n_i}(q^{e_i}))|_\ell$, for each
$i=2,\ldots,m$ we get an additional factor $(q-1)_\ell$, and in fact by
Proposition~\ref{prop:unip GL} even an additional factor $(q-1)_\ell^2$ if
$n_i$ is not a power of $\ell$. Thus, $(\ddagger^*)$ will be satisfied for $B$
unless $m\le2$ and moreover all $n_i$ are powers of $\ell$, as claimed.
\end{proof}

Of course, Example~\ref{gunterexmp} shows that the first excluded case does
lead to true exceptions.

\begin{rem}   \label{rem:SL exc}
 Let us further examine the first type of $\ell$-blocks of $G=\SL_n(\eps q)$
 singled out in Corollary~\ref{cor:SLn}. We formulate the details for $\eps=1$.
 So let $s\in G^*=\PGL_n(q)$ be an $\ell'$-element with
 $C^*:=\bC_{G^*}^\circ(s)\cong\GL_r(q^{n/r})/C_{q-1}$ for some
 $\ell$-power $r|n$. By \cite{BDR} there is a height and defect group
 preserving bijection $\cE_\ell(G,s)\to\cE_\ell(N,1)$ for some extension
 $N$ of~$C$. Since $|\bC_{\bG^*}(s):\bC_{\bG^*}^\circ(s)|$ divides $o(s)$ and
 so is prime to~$\ell$,
 any character of $N$ lies over a character of $C$ of the same $\ell$-defect.
 Let $T$ be a Coxeter torus of $\GL_n(q)$ and $\bar T^*$ its image in $G^*$.
 Now Remark~\ref{rem:GL} shows that the offending characters in $\cE_\ell(C,1)$
 are labelled by $\ell$-elements $t$ in the Coxeter torus $\bar T^*\le C^*$ of
 $G^*$, of order $((q^{n/r})^r-1)_\ell/(q-1)_\ell=(q^n-1)_\ell/(q-1)_\ell$.
 Under Jordan decomposition these correspond to characters $\chi\in\cE(G,st)$
 with $st\in\bar T^*$ regular, where the order of $t$ equals the full
 $\ell$-part of $|\bar T^*|=(q^n-1)/(q-1)$. In particular, these characters
 $\chi$ are the constituents of the restriction to $\SL_n(q)$ of irreducible
 Deligne--Lusztig characters $R_T^G(\tht)$ of $\GL_n(q)$ with $\tht\in\Irr(T)$,
 where we can choose $\tht$ such that the $\ell$-part of its order equals the
 full $\ell$-part of $|T|=|T^*|=q^n-1$.
\end{rem}

We now aim to show that at least in some cases where $(\ddagger^\star)$ fails,
Wilde's conjecture still holds.

\begin{prop}   \label{prop:hc}
 Let $L = \SL(U)$ with $U = \FF_q^n$, $q=p^f$, $G = \langle L,g\rangle \rhd L$,
 and $V = \CC^d$ a $\CC G$-module. Suppose that 
 \begin{enumerate}[\rm(1)]
  \item there is an element $g_1\in\GaL(U)\cong\GL_n(q)\rtimes C_f$ such that
   the conjugations by $g$ and by $g_1$ induce the same automorphism of $L$;
   and
  \item $h \in L$ is a $p'$-element with $\bC_G(g) \leq \bC_G(h)$.
 \end{enumerate}
 Then at least one of the following statements holds.
 \begin{enumerate}[\rm(a)]
  \item There is an $h$-invariant non-zero proper subspace $U_1$ of $U$ such
   that, if $Q$ is the unipotent radical of $\Stab_L(U_1)$, then $g$ stabilises
   the $Q$-fixed point subspace $V^Q$ and moreover 
\begin{equation}\label{eq:hc}
   \Trace(g|V) = \Trace(g|V^Q).
\end{equation}
  \item The $\FF_q\langle h\rangle$-module $U$ decomposes as
   $m(W_1\oplus\ldots\oplus W_t)$, a direct sum of $t$ pairwise
   non-isomorphic irreducible submodules $W_1,\ldots,W_t$, each with
   multiplicity~$m$, transitively permuted by $g_1$.
 \end{enumerate}
\end{prop}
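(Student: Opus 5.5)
The plan is to decompose $U$ under the $p'$-element $h$ into isotypic components and then split according to how $g_1$ permutes them. If the action is transitive we get (b) directly. Otherwise we build $U_1$ from a $g_1$-orbit of components and obtain \eqref{eq:hc} from the fact that $g$ acts fixed-point-freely on $Q$.

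\emph{Step 1: $g_1$ permutes the isotypic components of $U$.} Since $g\in\bC_G(g)\le\bC_G(h)$, the elements $g$ and $h$ commute. Because $g_1$ induces the same automorphism of $L$ as $g$ and $h\in L$, we get $g_1hg_1^{-1}=ghg^{-1}=h$ inside $\GaL(U)$. As $h$ is a $p'$-element, Maschke's theorem gives
$$U=U^{(1)}\oplus\cdots\oplus U^{(k)},\qquad U^{(j)}\cong m_jW_j,$$
where the $W_j$ are pairwise non-isomorphic irreducible $\FF_q\langle h\rangle$-modules. The semilinear map $g_1$ commutes with $h$, so it carries $h$-submodules to $h$-submodules and irreducibles to irreducibles. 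Hence $g_1$ permutes the isotypic components $U^{(j)}$.

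\emph{Step 2: the transitive case gives (b).} If $g_1$ is transitive on $\{U^{(1)},\dots,U^{(k)}\}$, then all $m_j$ are equal, since $g_1$ is a bijection preserving lengths of isotypic components. The $W_j$ are then transitively permuted up to isomorphism, and we are in case (b) with $t=k$. This includes $k=1$.

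\emph{Step 3: the non-transitive case.} Let $U_1$ be the sum of the components in one $g_1$-orbit, and $U_2$ the sum of the remaining ones. Then $U_1$ is $h$- and $g_1$-invariant, and $U_1\ne 0,U$. The parabolic $P=\Stab_L(U_1)$ is normalised by $g_1$, hence by $g$, and so is its unipotent radical $Q$. Therefore $g$ stabilises $V^Q$.

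Identify $Q\cong\operatorname{Hom}_{\FF_q}(U/U_1,U_1)\cong\operatorname{Hom}(U_2,U_1)$ $h$-equivariantly. Since $U_1$ and $U_2$ have no common composition factors,
$$\bC_Q(h)\cong\operatorname{Hom}_{\FF_q\langle h\rangle}(U_2,U_1)=0.$$
By hypothesis (2), $\bC_Q(g)\le\bC_Q(h)=1$, so $g$ acts fixed-point-freely on $Q$.

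\emph{Step 4: the trace identity.} Fixed-point-freeness makes the map $x\mapsto x^{-1}gxg^{-1}$ injective, hence bijective, on $Q$. So every element of the coset $Qg$ has the form $x^{-1}gx$ with $x\in Q$, i.e.\ is $Q$-conjugate to $g$. Averaging gives
$$\Trace(g|V)=\frac1{|Q|}\sum_{x\in Q}\Trace(xg|V)=\Trace\bigl(e_Q\,g\,|\,V\bigr),$$
where $e_Q=|Q|^{-1}\sum_{x\in Q}x$ is the idempotent projecting onto $V^Q$. Since $g$ normalises $Q$, it commutes with $e_Q$. Hence $\Trace(e_Qg|V)=\Trace(g|V^Q)$, which is \eqref{eq:hc}, and we are in case (a).

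\emph{Expected main obstacle.} The delicate point is Step 3: verifying that the identification of $Q$ with $\operatorname{Hom}(U_2,U_1)$ is $h$-equivariant. This needs $h$ to preserve both $U_1$ and the complement $U_2$, which holds because the complement is a sum of isotypic components. A second point to check in Step 3 is that $g$, not merely $g_1$, normalises $Q$. This follows because conjugation by $g$ agrees with conjugation by $g_1$ on $L\supseteq Q$.
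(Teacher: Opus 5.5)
Your proof is correct and follows the paper's skeleton. You decompose $U$ into isotypic components under $\langle h\rangle$ and get (b) when $g_1$ is transitive on them. Otherwise you take $U_1$ to be a $g_1$-stable sum of components and $Q$ the unipotent radical of its stabiliser, and hypothesis (2) makes $g$ act fixed-point-freely on $Q$. Two local steps differ.

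\emph{Fixed-point-freeness.} The paper shows $\bC_Q(g)=1$ by noting that any $u\in\bC_Q(g)$ lies in $\bC_L(h)$. That group is $\FF_q$-linear and commutes with $h$, so it preserves each isotypic component and hence $U_2$. Since $u$ is also trivial on $U_1$ and $U/U_1$, this forces $u=1$. Your identification $\bC_Q(h)\cong\operatorname{Hom}_{\FF_q\langle h\rangle}(U_2,U_1)=0$ is the same fact in different form.

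\emph{The trace identity.} This is the genuine difference. The paper uses that $Q$ is elementary abelian and splits $V$ into $V^Q$ and the $Q$-eigenspaces $V_\lambda$, $\lambda\ne 1_Q$. Duality then turns fixed-point-freeness on $Q$ into fixed-point-freeness on $\Irr(Q)\smallsetminus\{1_Q\}$. So $g$ permutes the non-trivial $V_\lambda$ without fixed points, and they contribute nothing to the trace.

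Your argument instead observes that $x\mapsto x^{-1}gxg^{-1}$ is a bijection of $Q$, so every element of $Qg$ is $Q$-conjugate to $g$. Averaging over $Q$ then gives $\Trace(g|V)=\Trace(e_Qg|V)=\Trace(g|V^Q)$. This needs neither characters of $Q$ nor commutativity. It therefore applies verbatim to any $g$-stable subgroup $Q$ with $\bC_Q(g)=1$, for instance non-abelian unipotent radicals of other parabolics or other types. The paper's version is more concrete but tied to the abelian radical of a maximal parabolic of $\SL(U)$.

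\emph{A small gap in Step 1.} The ``hence'' at the end of your Step 1 needs that $g_1$ sends isomorphic $h$-submodules to isomorphic ones, not just irreducibles to irreducibles. This holds because for any $h$-isomorphism $\phi\colon A\to B$, the map $g_1\phi g_1^{-1}$ is $\FF_q$-linear and commutes with $h$. The paper checks exactly this in \eqref{act11}, and you should state it explicitly.
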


\begin{proof}
Fix a basis $(e_1,\ldots,e_n)$ of $U$. Then $\GaL(U) =\GL(U)\rtimes\langle\si \rangle$, where 
$$\si:\sum^n_{i=1}x_ie_i \mapsto \sum^n_{i=1}x_i^p e_i$$ 
for $x_i \in \FF_q$. 
First we show that if $A$ is any $\FF_q$-subspace of $U$, then so is $g_1(A)$.
It suffices to prove the claim in the case $g_1=\si$. Note that, if
$\dim_{\FF_q}A=k$, then there is an element $g_2\in\GL(U)$ such that
$A=g_2(A_0)$, where $A_0 := \langle e_1,\ldots,e_k\rangle_{\FF_q}$. Now we
have $\si(A_0) = A_0$, and 
$$\si(A) = \si g_2(A_0) = \si g_2 \si^{-1} \si(A_0) = g_3(A_0),$$
where $g_3:= \si g_2\si^{-1}\in\GL(U)$. Since $g_3(A_0)$ is an
$\FF_q$-subspace, the claim follows. 

Note that $\si(x u) = x^p \si(u)$ for any $u\in U$ and
$x\in \FF_q$. Since $g_1\in \GL(U)\rtimes\langle\si\rangle$, there is a
power $r$ of $p$ such that $g_1(x u)=x^r g_1(u)$ for all $u\in U$
and $x\in\FF_q$: if $g_1$ belongs to the coset $\GL(U)\sigma^i$ with
$0 \leq i \leq f-1$, then we can take $r=p^i$.

By (2), $g$ centralises the subgroup $J:= \langle h\rangle \leq L$. By (1),
the same holds for $g_1$. Suppose that $A$ is any submodule of the
$\FF_q J$-module $U$. By the above discussion, $g_1(A)$ is an $\FF_q$-subspace,
which is now $J$-invariant. Thus $g_1(A)$ carries the structure of an 
$\FF_qJ$-module. Fixing a basis $(a_1,\ldots,a_k)$ of $A$ (over $\FF_q$),
assume that $h$ acts on this basis via a matrix 
$X = (x_{ij})_{1 \leq i,j \leq k}$. Then 
$$h(g_1(a_j))= g_1(h(a_j)) = g_1\bigl(\sum^k_{i=1}x_{ij}a_i \bigr)
   = \sum^k_{i=1}g_1(x_{ij}a_i) = \sum^k_{i=1}x_{ij}^rg_1(a_i).$$
Thus $h$ acts on $g_1(A)$ in the basis $(g_1(a_1), \ldots,g_1(a_k))$ via the
matrix 
\begin{equation}\label{act10a}
  X^{(r)} := (x_{ij}^r).
\end{equation}   
In particular,
\begin{equation}\label{act10}
  g_1(A) \cong A 
\end{equation}
as $\FF_qJ$-modules if $r=1$, equivalently if $g_1 \in \GL(U)$; we will need
this observation in the sequel.

Now suppose that $A$ and $B$ are two $\FF_q J$-submodules of $U$ which are
isomorphic. Choosing bases $(a_1,\ldots,a_k)$ of $A$ and $(b_1,\ldots,b_k)$ of
$B$, respectively, in which $h$ acts via the same matrix $X$, by the preceding
paragraph, we see that $h$ acts via the same matrix $X^{(r)}$ in the bases 
$(g_1(a_1),\ldots,g_1(a_k))$ of $g_1(A)$ and 
$(g_1(b_1),\ldots,g_1(b_k))$ of $g_1(B)$. Thus 
\begin{equation}\label{act11}
  g_1(A) \cong g_1(B) \mbox{ as }\FF_qJ\mbox{-modules}.
\end{equation}

Recall that $J$ acts semisimply on $U$. Hence we can decompose
$U = \bigoplus^t_{i=1}m_iW_i$, where $W_1,\ldots,W_t$ are pairwise
non-isomorphic
irreducible $J$-modules and $m_i\in\ZZ_{\geq 1}$. By \eqref{act11}, $g_1$
permutes the subspaces $m_1W_1,\ldots, m_tW_t$. If this action is transitive,
then (b) holds. Otherwise we get a decomposition $U = U_1 \oplus U_2$ as a
direct sum of $\FF_qJ$-modules, where $U_1, U_2 \neq 0$ (each being the sum of 
some $m_iW_i$) have no common irreducible summands and are both $g_1$-stable.
Setting 
$$G_1:= \langle L,g_1\rangle \leq \GaL(U)$$
now we show that $\bC_{G_1}(h)$ preserves each $U_i$. Indeed, since
$[g_1,h]=[g,h]=1$, we have $\bC_{G_1}(h) = \langle \bC_L(h),g_1\rangle$. By
construction, $U_1$ and $U_2$ are $g_1$-stable. On the other hand, \eqref{act10}
shows that $\bC_L(h) \leq L$ preserves each $m_iW_i$. Hence the claim follows. 

As $L \lhd G_1$, $\bC_{G_1}(h)$ normalises $P:=\Stab_L(U_1)$ and $Q:=\bO_p(P)$.
But $g$ and $g_1$ act the same on $L$, so $g$ also normalises $P$ and $Q$.
Suppose now that $u\in Q$ is centralised by $g$. Then
$u\in\bC_G(g)\leq \bC_G(h)$, i.e., $u \in \bC_L(h) \leq \bC_{G_1}(h)$. It
follows that $u$ stabilises both $U_1$ and $U_2$. But, as any element in $Q$,
$u$ acts trivially on both $U_1$ and $U/U_1$, so $u=1$. Thus $g$ acts
fixed-point-freely on $Q \smallsetminus \{1\}$. Since $Q$ is elementary
abelian, by duality we conclude that $g$ acts fixed-point-freely on
$\Irr(Q) \smallsetminus \{1_Q\}$. Now we can decompose 
$$V = V^Q \oplus \sum_{1_Q \neq \la \in \Irr(Q)}V_\la$$
as a $\CC Q$-module, where $V_\la$ affords the $Q$-character $\dim(V_\la)\la$,
$g$ stabilises $V^Q$ and permutes the set $\{V_\la \mid\la\neq 1_Q\}$ with
no fixed point. It follows that $\Trace(g|V) = \Trace(g|V^Q)$.
\end{proof}

\begin{prop}   \label{prop:sl2}
 Under the assumption of Conjecture \ref{conj:aqs12}, suppose that $L=\SL_n(q)$
 and $\chi_L\in\cE(L,s)$, where $s\in G^*=\PGL_n(q)$ is semisimple with
 $\bC_{L^*}(s)$ a Coxeter torus. Assume in addition that
 $2<\ell|\gcd(n,q-1)$, and that there is some element $h_1\in\GaL_n(q)$
 such that the conjugations by $h$ and $h_1$ induce the same automorphism
 of~$L$. Then Condition \eqref{condB} holds at $\ell$ for $(H,h,\chi)$.
\end{prop}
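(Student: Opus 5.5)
We must show that if $\chi(h)\ne0$, then $o(h\bZ(H))_\ell$ divides $(|H:\bZ(H)|/\chi(1))_\ell$. The character $\chi_L$ is irreducible (Lemma~\ref{2aqs}) and lies in $\cE(L,s)$ with $\bC_{L^*}(s)$ a Coxeter torus. Hence $\chi_L$ is, up to sign, a constituent of the restriction of a Deligne--Lusztig character $R_T^{\GL}(\tht)$, where $T\cong\FF_{q^n}^\times$ is a Coxeter torus. Its degree is $|\GL_n(q)|_{q'}/(q^n-1)$ divided by the number of constituents, so
$$\left(\frac{|H:\bZ(H)|}{\chi(1)}\right)_\ell\ \ge\ \frac{|H:L|_\ell\,(q^n-1)_\ell}{(q-1)_\ell\,(n,q-1)_\ell}\cdot c,$$
where $c$ is an $\ell$-power accounting for the splitting of $R_T^G(\tht)$ on restriction to $\SL_n(q)$. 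The plan is to compare this with $o(h\bZ(H))_\ell$, splitting according to the $\ell$-part $h_\ell$ of $h$.

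First, I would reduce to the case where $h_\ell$ is not too small modulo the centre. Write $h=h_\ell h_{\ell'}$. The element $h_\ell$ acts on $L$ as an element of $\PGaL_n(q)$ via $h_1$. If $o(h_\ell\bZ(H))\le(q^n-1)_\ell/(q-1)_\ell$ times the $\ell$-part of the outer contribution, the inequality above already gives the claim. So the critical case is when $h_\ell$ has larger order, for instance when $h_\ell$ lies in a Sylow $\ell$-subgroup of $\GL_n(q)$ of exponent bounded by $n(q-1)_\ell$, but with a non-regular structure. Here I would apply Proposition~\ref{prop:hc} to the $\ell$-element $h_\ell$, which is a $p'$-element, with $g=h$ (after replacing $h_\ell$ by a suitable $L$-representative modulo scalars, whose centraliser contains $\bC_H(h)$).

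In alternative~(a) of Proposition~\ref{prop:hc}, $\chi(h)=\Trace(h|V^Q)$ with $Q$ the unipotent radical of a proper parabolic subgroup. But $V^Q$ is the Harish-Chandra restriction of $\chi_L$ to a proper Levi subgroup. Since $\chi_L$ lies in a Lusztig series of a Coxeter torus, $\chi_L$ is cuspidal, so $V^Q=0$. Hence $\chi(h)=0$, a contradiction. Thus alternative~(b) holds. In that case $U=m(W_1\oplus\cdots\oplus W_t)$ with the $W_i$ permuted transitively by $h_1$. Let $d=\dim W_i$, so $n=mtd$. The $\ell$-element $h_\ell$, modulo scalars, then has order at most $m_\ell\cdot(q^{d}-1)_\ell/(q-1)_\ell$ times the order of the field/diagonal automorphism part. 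The transitive permutation of $t$ summands costs a factor of $t$ in the outer automorphism part, and that factor is accounted for in $|H:L|$. We have $(q^n-1)_\ell=(q^d-1)_\ell\,(mt)_\ell$ since $\ell>2$ and $\ell\mid q-1$. Comparing with the degree bound should give the required divisibility.

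The main obstacle is twofold. The first is justifying that Proposition~\ref{prop:hc} applies, which needs $\bC_H(h)\le\bC_H(h_\ell)$; this holds since $h_\ell$ is a power of $h$, but we need an element of $L$, so one must handle $h_\ell\notin L\bZ(H)$ through the $\GaL$ structure. The second, and the most delicate, is the bookkeeping of the constant $c$ and of $|H:L|_\ell$ when $h$ induces a field or diagonal automorphism: one must check that the splitting of $R_T^G(\tht)$ over $L$ is exactly compensated by the diagonal automorphisms present in $H$. I would do this by computing $c$ via the stabiliser of $\tht$ under multiplication by linear characters of $\GL_n(q)/\SL_n(q)$, which is cyclic of order dividing $n$.
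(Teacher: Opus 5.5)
Your treatment of alternative (a) of Proposition~\ref{prop:hc} is fine. Cuspidality of $\chi_L$ gives $V^Q=0$; the paper reaches the same contradiction via a Zsigmondy prime of $q^n-1$ that divides the order of no proper Levi subgroup.

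The obstacle of needing an element of $L$ has a simple fix. Put $a=o(hL)$ and $u:=(h^a)_\ell\in L$. This $u$ is a power of $h$, so $\bC_H(h)\le\bC_H(u)$, and $o(h\bZ(H))_\ell$ divides $|H:L|_\ell\,o(u\bZ(H))$. This is exactly where the factor $|H:L|_\ell$ of the codegree is used. Your suggestion of replacing $h_\ell$ by an $L$-representative modulo scalars does not work: $L\bZ(H)=L$, and $h_\ell$ may induce an outer automorphism.

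The genuine gap is alternative (b). Writing $k$ for your $d=\dim W_i$, your bound there is roughly $m_\ell(q^k-1)_\ell/(q-1)_\ell$ times an automorphism factor. It uses neither $u\in\SL_n(q)$ nor anything about $\chi$, and it is too weak. The splitting constant $c$ cannot compensate, since $c$ can be $1$. For example, take $n=\ell$, $H=L$, and $s$ the image of a generator of $\FF_{q^\ell}^\times$ in a Coxeter torus. Then $\chi_L=\pm R_T^G(\tht)|_L$ is irreducible and $(|L:\bZ(L)|/\chi(1))_\ell=1$, so you must show $u$ is central. Your bound with $m=t=1$, $k=\ell$ still allows order $\ell$.

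Two ingredients are missing.

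(i) $\det u=1$. Let $X$ be the matrix of $u$ on $W_1$, $\gamma$ an eigenvalue of $X$, and $h_1(xv)=x^rh_1(v)$. Then $u$ acts on $h_1^i(W_1)$ via $X^{(r^i)}$, so
$1=\det u=\gamma^{m\frac{q^k-1}{q-1}\cdot\frac{r^t-1}{r-1}}$.

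(ii) The $h_1$-invariance of $\chi_L$, which holds because $\chi_L$ extends to $H$. Here $\chi_L$ lies under $\chi_{s_1}$ with $s_1\in\GL_n(q)$ regular, whose eigenvalue $\la$ has $\ell$-part of order $(q^n-1)_\ell$. Invariance forces $\la^r=x\la^{q^i}$ for some $x\in\FF_q^\times$. This gives $\ell^d\mid r-1$, where $\ell^d=|\bZ(L)|_\ell$.

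With (ii), the exponent in (i) has $\ell$-part $(mkt)_\ell=n_\ell$, so $o(\gamma)\mid n_\ell$. Then $\delta:=\gamma^{n_\ell/\ell^d}$ has order dividing $\ell^d$, so it is fixed by both $x\mapsto x^q$ and $x\mapsto x^r$. Hence $u^{n_\ell/\ell^d}$ acts as the scalar $\delta$ on every $h_1^i(W_1)$. This yields $o(h\bZ(H))_\ell\mid |H:L|_\ell\,n_\ell/\ell^d$. Since $(|L|/\chi(1))_\ell\ge n_\ell$, this divides $(|H:\bZ(H)|/\chi(1))_\ell$, and there is no need to compute $c$. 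Without (ii), neither the $\ell$-part of $\frac{r^t-1}{r-1}$ nor the agreement of eigenvalues across the different $h_1^i(W_1)$ is under control.
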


\begin{proof}
As noted in the proof of Lemma \ref{2aqs}, $\chi_L\in\Irr(L)$. Let $\chi$ be
afforded by a $\CC G$-module~$V$. Since $\chi$ is faithful,
$\bC_H(L) = \bZ(H)=\bZ(L)$. Let $a$ denote the order of the coset $hL$ in 
$H/L$, and let $u$ denote the $\ell$-part of $h^a\in L$. As $u$ is a power
of~$h$, $\bC_H(h) \leq \bC_H(u)$.  We now apply Proposition~\ref{prop:hc} to
$(H,h,u)$ in place of $(G,g,h)$. 

Suppose conclusion (a) of Proposition~\ref{prop:hc} holds. Since
$\chi(h)\neq0$, it follows that $V^Q \neq 0$, i.e., 
$\tw{*}R^L_M(\chi_L) \neq 0$ (where $M$ is a Levi complement to $Q$ in
$\Stab_L(U_1)$, in the notation of Proposition \ref{prop:hc}).
By \cite[Prop.~3.3.20]{GM20}, this implies $s$ is conjugate to some element
of the dual Levi subgroup $\mathbf{M}^*$ of $\bG^*$, that is, $M^*$ contains a
Coxeter torus of~$G^*$, since $s$ is regular with connected
centraliser a Coxeter torus. Hence $M$ contains a Coxeter torus $T$, of order
$(q^n-1)/(q-1)$, of $\SL_n(q)$. On the other hand, the assumption
$2 < \ell|\gcd(n,q-1)$ implies that $n \geq 3$ and $q \geq 4$.
In particular, $|T|$ is divisible by a primitive prime divisor of 
$q^n-1$ (see e.g. \cite[Thm~28.3]{MT}), which however does not divide the order
of any proper Levi subgroup of~$L$, contrary to $T \leq M$.

Thus conclusion (b) of Proposition~\ref{prop:hc} holds. Write
$$q=p^f,~\ell^b = (q-1)_\ell,~\ell^c = n_\ell,
  ~|\bZ(L)|_\ell = \ell^d \mbox{ where }d:=\min(b,c).$$
Changing $h_1$ to a suitable generator of $\langle h_1 \rangle$, we may assume
that there is some $r=p^{f/e}=q^{1/e}$ such that
$$h_1(x u) = x^r h_1(u)$$
for all $u\in\FF_q^n$ and $x\in\FF_q$. In particular, $h_1^e\in\GL_n(q)$,
and \eqref{act10} shows that the integer $t$ in Proposition~\ref{prop:hc}(b)
divides $e$. 

Recall from Remark \ref{rem:SL exc} that $\chi_L$ lies under a semisimple
character $\chi_{s_1}$ of $\GL_n(q)$, where $s_1\in\GL_n(q)$ is a regular
semisimple
element whose $\ell$-part has order $(q^n-1)_\ell = \ell^{b+c}$. Let $\la$
denote an eigenvalue of $s_1$. Since $\chi_L$ extends to $\chi$, $\chi_L$ is
invariant under $h$ and~$h_1$. Now $\chi_{s_1}$ and $\chi_{s_1}^{h_1}$ share a
common irreducible constituent $\chi_L$. It follows that $s_1$ and a
$\GL_n(q)$-conjugate of $s_1^{(r)}$ (using the notation of \eqref{act10a})
differ by a scalar $x\in\FF_q^\times$, and hence
$$\la^r= x\la^{q^i}$$
for some $0 \leq i \leq n-1$. Now if $i=0$, then $\la^{r-1} = x$, whence
$\la^{(r-1)(q-1)}=1$, and so $\ell^{b+c} = |\la|_\ell$ divides
$(r-1)_\ell(q-1)_\ell=\ell^b(r-1)_\ell$, implying $\ell^c|(r-1)$, and thus
\begin{equation}\label{div10}
  \ell^d|(r-1).
\end{equation}
Suppose $i >0$. Then $\la^{q^i-r}=x^{-1}$, whence $\la^{(q^i-r)(q-1)}=1$,
and so $\ell^{b+c}=|\la|_\ell$ divides
$(q^i-r)_\ell(q-1)_\ell=\ell^b(q^r-i)_\ell$. It follows that $\ell^c$ divides
$q^i-r=(q^i-1)-(r-1)$. As $b,c \geq d$ and both $q-1$ and $q^i-1$ are divisible
by $\ell^b$, we see that \eqref{div10} holds in this case as well.

Now we consider the transitive action of $h_1$ on the set of isomorphism types
of $W_1,\ldots,W_t$. Recall from the proof of Proposition~\ref{prop:hc}
that if $X$ denotes the action of $u$ on $W_1$ (in some basis), then the
$\ell$-element~$u$ acts on $h_1(W_1)$ via $X^{(r)}$ (in a suitable basis).
Let $\gamma$ denote an eigenvalue of $X$. Since the
$\FF_q\langle u\rangle$-module $W_1$ is irreducible, $\gamma$ has degree
$k:=\dim_{\FF_q}(W_1)$ over $\FF_q$, and the spectrum of the matrix $X$ is
$\{\gamma^{q^j} \mid 0 \leq j \leq k-1\}$. Moreover
$$\det(u|_{W_1}) = \det(X) = \gamma^{(q^k-1)/(q-1)}.$$
As $\det(X^{(r)}) = \det(X)^r$ and $u \in L$, we obtain
$$1=\det(u) = \prod^{t-1}_{i=1}\det\bigl(X^{(r^i)}\bigr)^m
   =\det(X)^{m\frac{r^t-1}{r-1}}
   =\gamma^{m\frac{q^k-1}{q-1}\cdot\frac{r^t-1}{r-1}}.$$
Using \eqref{div10} and the fact that $u$, and hence $\gamma$, is an
$\ell$-element, we see that $o(\gamma)$ divides the $\ell$-part of
$m\frac{q^k-1}{q-1}\cdot\frac{r^t-1}{r-1}$ which is ~$(mtk)_\ell$. Dimension
comparison shows that $n=mtk$. 
So we have shown that $o(\gamma)$ is at most $n_\ell = \ell^c$. 

Now let $\delta:=\gamma^{\ell^{c-d}}$. Then $\delta^{\ell^d}=1$.
Using~\eqref{div10} again, we see that $\delta^{r-1}=1$, and thus
$\delta^r=\delta=\delta^q$; in particular $\delta\in \FF_q$.
Thus $u^{\ell^{c-d}}$ acts on $\FF_q^n$ as the scalar matrix
$\diag(\delta,\delta,\ldots,\delta)$, and so $o(u\bZ(H))$ divides $\ell^{c-d}$.
We have therefore shown that
$$o(h\bZ(H))_\ell \mbox{ divides }|H/L|_\ell \ell^{c-d}.$$
On the other hand, 
$|L|/\chi(1)$ is a multiple of $|T|_\ell=(q^n-1)_\ell/(q-1)_\ell=n_\ell$, hence
$\ell^{\df(\chi)}=(|H|/\chi(1))_\ell$ is a multiple of $|H/L|_\ell n_\ell$.
So the $\ell$-part of $|H:\bZ(H)|/\chi(1)$ is a multiple of
$|H/L|_\ell \ell^{c-d}$, and we are done.
\end{proof}

To possibly extend Proposition \ref{prop:sl2} to groups $G$ that are extensions
of $[G,G]\cong\SL_n(q)$ by \emph{any} automorphism, as well as of 
$[G,G]\cong\SU_n(q)$ (and thus finish off the blocks singled out in
Remark~\ref{rem:SL exc}, one needs to extend the Curtis-type
formula~\eqref{eq:hc} to disconnected reductive algebraic groups and relate
constituents of Lusztig restrictions over $G$ and $[G,G]$. The former can be
done; however the latter seems much more difficult.
\medskip

Next we record the following result, which completes the case of $\GL_n(\eps q)$.

\begin{prop}   \label{prop:glu}
 Let $G = \GL_n(\eps q)$ with $\eps\in\{\pm1\}$. Suppose $\chi(g)\neq 0$ for
 some $\chi\in\Irr(G)$ and $g\in G$. Then $o(g\bZ(G))$ divides
 $|G/\bZ(G)|/\chi(1)$. In particular, both Conjecture~A and
 Conjecture~\ref{conj:aqs} hold for $G = \GL_n(\eps q)$, the latter whenever
 it is non-solvable.
\end{prop}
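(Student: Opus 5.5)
We will argue one prime at a time: fix a prime $\ell$ and show that $o(g\bZ(G))_\ell$ divides $(|G:\bZ(G)|/\chi(1))_\ell$.

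\emph{Defining characteristic.} If $\ell=p$ divides $q$, then $\bZ(G)$ has $p'$-order. We pass to $G/\bZ(G)$ modulo $p'$ pieces and argue as in Proposition~\ref{prop:defchar}. The $p$-blocks of $\GL_n(\eps q)$ are the Steinberg-type blocks of defect zero together with blocks of full defect. The bounds in Table~\ref{tab:Achi} for type $A_{n-1}$ apply verbatim: the degree polynomials and the centraliser structures agree with those for $\SL_n$ up to $p'$-factors. Hence $(\ddagger)$ holds, which is the same as $(\ddagger^\star)$ here, and the block-theoretic argument via \cite[Cor.~5.9]{Na98} gives the claim at $p$.

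\emph{Cross characteristic, $\ell\ne p$.} The key observation is that $\bZ(G)\cong C_{q-\eps}$ is large. If $\ell\nmid q-\eps$, then $\bZ(G)_\ell=1$, and Proposition~\ref{prop:dagger class} gives $(\ddagger)$. Together with the fact that $g_\ell$ lies in a defect group, this yields the claim.

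If $\ell\mid q-\eps$ and $\ell>2$, we must show the stronger statement $(\ddagger^\star)$ for $\GL_n(\eps q)$, namely
$$(q-\eps)_\ell\,\exp(D/\bZ(G)_\ell)\le (|G|/\chi(1))_\ell.$$
We follow the proof of Proposition~\ref{prop:dagger class}. By Bonnaf\'e--Dat--Rouquier \cite{BDR}, we reduce to unipotent blocks of Levi subgroups $\prod_i \GL_{n_i}(\eps^{e_i} q^{e_i})$. In these, every character defect picks up a factor $(q-\eps)_\ell$ from each factor's centre. Proposition~\ref{prop:unip GL} then supplies the extra factor $(q-\eps)_\ell$ whenever some $n_i$ is not an $\ell$-power. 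The remaining configurations are those of Remark~\ref{rem:GL}, namely $\chi=\pm R_T^G(\tht)$ for a Coxeter torus $T$. We treat these directly. Such $\chi$ vanishes off elements conjugate into $T$ modulo $\ell'$-parts, so a nonvanishing $g$ has $g_\ell\in T\cong\FF_{q^n}^\times$. Then $o(g_\ell\bZ(G))$ divides $(q^n-1)_\ell/(q-1)_\ell$, which is exactly the $\ell$-part of $|G:\bZ(G)|/\chi(1)$, because $\chi(1)=|G|_{p'}/|T|$. The caveat about $g$ versus $g_\ell$ is handled by the character formula for Deligne--Lusztig characters: $\chi(g)\ne0$ forces $g_\ell$ into a conjugate of $T$.

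\emph{The prime $\ell=2$.} This is the main obstacle, since Proposition~\ref{prop:unip class} requires $\ell>2$. Here I would try to avoid blocks altogether and use the Green/Deligne--Lusztig character formula for $\GL_n$. Write $g=su$. Then $\chi(g)$ is a combination of Green functions of $\bC_G(s)=\prod\GL_{m_j}(q^{d_j})$, so $\chi(g)\ne0$ forces the Jordan correspondent to be ``compatible'' with $\bC_G(s)$. This gives the divisibility
$$\chi(1)_{2}\ \big|\ |G:\bC_G(s)|_{2}\cdot(\text{degree of a unipotent char of }\bC_G(s))_2.$$
We then bound $o(s\bZ(G))_2$ by the $2$-part of $|\bZ(\bC_G(s)):\bZ(G)|$ and $o(u)$ by a power of $p$. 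This ordering of steps is where I expect most of the difficulty.

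Finally, once the divisibility holds, Conjecture~A for $G$ follows because $o(g)$ divides $|\bZ(G)|\,o(g\bZ(G))$. Conjecture~\ref{conj:aqs} for non-solvable $G$ follows via Lemma~\ref{2aqs2}, which reduces to the same group.
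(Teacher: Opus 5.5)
\textbf{Where the proposal stands.} Your defining-characteristic step is essentially the paper's, via Proposition~\ref{prop:defchar}. For odd $\ell\ne p$ your block-theoretic route is different from the paper's but looks viable. It uses Proposition~\ref{prop:dagger class} when $\ell\nmid q-\eps$. When $\ell\mid q-\eps$ it uses Bonnaf\'e--Dat--Rouquier, the central contributions of the Levi factors, Proposition~\ref{prop:unip GL} and Remark~\ref{rem:GL}, plus a direct Deligne--Lusztig argument for the Coxeter characters $\pm R_T^G(\tht)$. What this proves is a character-by-character bound, not $(\ddagger^\star)$ for whole blocks, and that is enough.

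\textbf{The gap at $\ell=2$, $q$ odd.} You leave this case as a sketch, and the sketch rests on a false divisibility. Take $G=\GL_3(q)$ with $q$ odd, $\chi=\chi^{(2,1)}$ of degree $q(q+1)$, and $g=s=\diag(a,a,b)$ with $a\ne b$ in $\FF_q^\times$.
\begin{enumerate}[\rm(i)]
 \item Since $\chi$ is the permutation character on lines minus $1_G$, we get $\chi(s)=(q+2)-1=q+1\ne0$.
 \item Here $\bC_G(s)=\GL_2(q)\times\GL_1(q)$, and $|G:\bC_G(s)|=q^2(q^2+q+1)$ is odd.
 \item The unipotent characters of $\bC_G(s)$ have degrees $1$ and $q$.
\end{enumerate}
So your claimed $\chi(1)_2\mid|G:\bC_G(s)|_2\,\psi(1)_2$ would force $(q+1)_2=1$, which is false.

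Blocks cannot rescue this prime either. In $\GL_2(3)$ the character of degree $4$ has $(|G:\bZ(G)|/\chi(1))_2=2$. Its block (the unique $2$-block) has defect group $D\cong SD_{16}$, with $\exp(D/\bZ(G)_2)=4$.

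\textbf{The missing idea: the paper's uniform argument.} The paper proceeds by induction on $n$ and treats every prime $r\nmid q$ the same way.
\begin{enumerate}[\rm(1)]
 \item If $\chi=\pm R^G_L(\psi)$ for a proper Levi subgroup $L=\bC_G(t)$, the character formula for Lusztig induction gives $h\in G$ and a unipotent $v$ with $s\in L^h$ and $\psi^h(sv)\ne0$. Induction on the factors $\GL_m((\eps q)^a)$ of $L$, together with $\chi(1)=|G:L|_{p'}\psi(1)$ and $\bZ(G)\le\bZ(L)$, then handles the $p'$-part of $o(g\bZ(G))$.
 \item Otherwise $\chi$ is a unipotent character $\chi^\lambda$ times a linear character. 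Take $r^a$ to be the $r$-part of $o(g\bZ(G))$; it divides the $r$-part of the order of some eigenvalue of $s$. The Fong--Srinivasan Murnaghan--Nakayama formula then shows that $\chi^\lambda(su)\ne0$ forces $\lambda$ to have a hook of some length $t$ with $r^a\mid(\eps q)^t-1$.
 \item The quantised hook formula
 $$|G:\bZ(G)|_{p'}/\chi(1)_{p'}=\pm(q-\eps)^{-1}\prod_h((\eps q)^{\ell(h)}-1)$$
 gives the required divisibility, with a second hook absorbing the factor $q-\eps$.
\end{enumerate}
This needs no block theory, covers $r=2$, and makes your separate odd-$\ell$ analysis unnecessary.
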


\begin{proof}
(1) We proceed by induction on $n$, with the induction base $n=1$ being
obvious. For the induction step, by a direct check (or arguing as in the proof
of Theorem \ref{thm:reduction}) when $\GL_n(\eps q)$ is solvable, and using
Corollary \ref{cor:def aut} when it is not, it suffices to
prove the statement for the $p'$-part of the order of $g\bZ(G)$, where $p|q$.

First we consider the case $\chi = \pm R^G_L(\psi)$ is Lusztig induced from a
proper Levi subgroup $L = \bC_G(t)$, for some semisimple element $t \in G^*=G$.
Note that $L$ is a direct product of subgroups of the form $\GL_m((\eps q)^a)$
with $m < n$, to which the induction hypothesis applies. Also write $g=su$ with
semisimple part $s$ and unipotent part $u$. Since $R^G_L(\psi)(g) \neq 0$, the
formula \cite[Thm~3.3.12]{GM20} for $R^G_L$ implies that there exist some
$h \in G$ and some unipotent element $v$ such that $s \in L^h$,
$v\in\bC_{L^h}(s)$, and $\psi^h(sv)\neq 0$. Applying the induction hypothesis
to each direct factor of $L^h$, we see that there is an integer $M$ dividing
$|L/\bZ(L)|/\psi(1)$ such that $(sv)^M \in \bZ(L)$. As $s$ is semisimple, there
is a $p'$-integer $N$ dividing $|L/\bZ(L)|_{p'}/\psi(1)_{p'}$ such that
$s^N \in \bZ(L)$.

Note that $\bZ(L)\geq\bZ(G)$; denote $d:= |\bZ(L)/\bZ(G)|$, so that
$s^{Nd} \in \bZ(G)$. Also, $\chi(1)=|G:L|_{p'}\psi(1)$, so $Nd$ divides 
$$d \cdot |L/\bZ(L)|_{p'}/\psi(1)_{p'} = |L/\bZ(G)|_{p'}/\psi(1)_{p'}
  =|G/\bZ(G)|_{p'}/\chi(1)_{p'}.$$
We have therefore shown that $o(g\bZ(G))_{p'}$ divides
$|G/\bZ(G)|/\chi(1)$.
\smallskip

(2) It remains to consider the case $\chi$ is not Lusztig induced from a
proper Levi subgroup, hence (see \cite[p.~116]{FS}) a unipotent character
times a linear character. Without loss we may assume that $\chi$ is the
unipotent character $\chi^\la$ labelled by a partition $\la\vdash n$. 
Its degree is given by the quantised hook formula
\begin{equation}\label{eq:glu1}
  \chi^\la(1)_{p'}=\pm \frac{(\eps q-1)\cdots((\eps q)^n-1)}{\prod_h((\eps q)^{\ell(h)}-1)},
\end{equation}
where $h$ runs over all the hooks of $\la=(\la_1\ge\ldots\ge \la_r)$, of length
$\ell(h)$ (see for example \cite[Prop.~4.3.1]{GM20}). It suffices to prove the
statement for the $r$-part, say $r^a$, of the order of $g\bZ(G)$, for any prime
$r \nmid q$. 

Note that $r^a$ divides the $r$-part of the order of some eigenvalue $\xi$ of
$s$ (acting on $\overline\FF_q^n$), with multiplicity say $m$. If $\eps=1$
then $\bC_G(s)$ is a direct product of subgroups $\GL_{m_i}(q^{t_i})$, and for
at least one $i$, we have $m_{i}=m$ and $\xi \in \FF_{q^{t_{i}}}^\times$.
Setting $t:=t_{i}$, we have that $r^a$ divides $q^t-1$. If $\eps=-1$, then
$\bC_G(s)$ is a direct product of subgroups $\GL_{m_i}(q^{t_i})$ with $2|t_i$,
or $\GU_{k_j}(q^{l_j})$ with $2 \nmid l_j$. Moreover, either 

$\bullet$ for at least one $i$, we have $m_i=m$ and
$\xi\in \FF_{q^{t_i}}^\times$, in which case, setting $t:=t_{i}$, we have that
$r^a$ divides $q^{t_{i}}-1=(-q)^t-1$, or

$\bullet$ for at least one $j$, we have $k_{j}=m$ and $\xi^{q^{l_{j}}+1}=1$, in
which case, setting $t:=l_{j}$, we have that $r^a$ divides
$q^{l_{j}}+1 = -((-q)^t-1)$.

In both cases, the assumption $\chi(g)\neq0$ and the formula in
\cite[Thm~(2G)]{FS} imply that $\la$ has at least one hook $h_1$ of length
$\ell(h_1)=t$. Since $r^a|((\eps q)^t-1)$ in both cases and $|\bZ(G)|=q-\eps$
divides $((\eps q)^{\ell(h_2)}-1)$ for another hook $h_2$ of $\la$, it follows
from \eqref{eq:glu1} that $r^a$ divides 
$$\begin{aligned}|G/\bZ(G)|_{p'}/\chi(1)_{p'} &
   = \pm\frac{1}{q-\eps}\prod_{h}((\eps q)^{\ell(h)}-1)\\
  & = \pm\frac{(\eps q)^{\ell(h_2)}-1}{q-\eps}((\eps q)^{\ell(h_1)}-1)
   \prod_{h \neq h_1,h_2}((\eps q)^{\ell(h)}-1),\end{aligned}$$
and the statement follows for the prime $r$.
\end{proof}

As an immediate consequence of Proposition~\ref{prop:glu}, Conjecture~A and
Conjecture \ref{conj:aqs} hold for the quasi-simple groups
$G\cong\SL_n(\eps q)$ at $\ell=2$ when either $n$ is odd or $q$ is even.
\medskip

Finally we discuss some small rank cases.

\begin{prop}   \label{prop:suzree}
 Condition~$(\ddagger)$ holds for all covering groups of $\tw2B_2(q^2)$ and
 $^2G_2(q^2)$ at all primes.
\end{prop}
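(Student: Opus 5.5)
We split according to the prime $\ell$, using that $S=\tw2B_2(q^2)$ (with $q^2=2^{2m+1}$) and $S={}^2G_2(q^2)$ (with $q^2=3^{2m+1}$) have trivial Schur multiplier apart from the exceptional cases $\tw2B_2(8)$ (multiplier $2^2$) and $^2G_2(3)'\cong\PSL_2(8)$.

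First we dispose of the small cases. The covering groups of $\tw2B_2(8)$ have an exceptional Schur multiplier, and $^2G_2(3)'$ is of Lie type $A_1$. Their character tables are in {\sf GAP}, so $(\ddagger)$ is checked there directly, exactly as in the proof of Proposition~\ref{prop:spor}; for the $2$-blocks this includes the prime $2$, using Theorem~\ref{thm:geoff} to read off element orders in defect groups. From now on the covering group is $S$ itself, and $S=\bG^F$ with $\bG$ simply connected.

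Next we treat the defining characteristic $\ell=p$ (that is, $\ell=2$ for Suzuki groups and $\ell=3$ for Ree groups). Here Proposition~\ref{prop:defchar} applies verbatim, since it was stated for arbitrary Steinberg endomorphisms $F$. To be safe we will recheck the needed bound by hand.
\begin{itemize}
\item For $\tw2B_2(q^2)$: a Sylow $2$-subgroup has order $q^4$ and exponent $4$. Every non-Steinberg character has degree $1$, $q^4+1$, $(q^2-1)q/\sqrt2$, or $(q^2\pm\sqrt2q+1)(q^2-1)$. Hence its defect is at least $q^4/(q/\sqrt2)_2\ge 2^{2}$ whenever $q^2\ge8$, and this is at least $\exp(P)=4$.
\item For $^2G_2(q^2)$: a Sylow $3$-subgroup has order $q^6$ and exponent $9$. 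Non-Steinberg characters have $3$-part of degree at most $q^3/\sqrt3$, so their defect is at least $3^{(6m+3)-(3m+1)}\ge 9$ for $q^2\ge 27$.
\end{itemize}

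Finally we treat cross characteristic $\ell\ne p$. For the Suzuki groups, $\ell$ is odd, and the Sylow $\ell$-subgroups are cyclic, lying in one of the cyclic maximal tori of orders $q^2-1$ and $q^2\pm\sqrt2q+1$. For the Ree groups with $\ell\ge5$, the Sylow $\ell$-subgroups are likewise cyclic, lying in tori of orders $q^2+1\pm\sqrt3q$ or $(q^2\pm1)/2$. In all these cases the defect groups are abelian, so Brauer's result gives $(\ddagger)$; alternatively, Corollary~\ref{cor:ab} applies since $\bZ(S)=1$ and the two conditions coincide.

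The remaining case, and the main obstacle, is $\ell=2$ for $^2G_2(q^2)$. Here a Sylow $2$-subgroup is elementary abelian of order $8$, so it is again abelian with exponent $2$. Every block then satisfies $(\ddagger)$ trivially, because any block of positive defect has $p^{\df(\chi)}\ge2=\exp(D)$ for all $\chi$ in it, and blocks of defect zero are trivial. The only care needed is to confirm the structure $C_2\times \bC_{S}(t)$-type centralisers, and that all defect groups are subgroups of $C_2^3$. This follows from the known Sylow structure of Ree groups (Ward).

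Thus in every case the defect groups are either abelian or handled by Proposition~\ref{prop:defchar} or by {\sf GAP}, and the proposition follows.
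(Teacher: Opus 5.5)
Your proof is correct and follows essentially the same route as the paper: the exceptional covers via the {\sf GAP} check of Proposition~\ref{prop:spor}, the defining prime via Proposition~\ref{prop:defchar}, odd non-defining primes via abelian (indeed cyclic) Sylow subgroups, and $\ell=2$ for $^2G_2(q^2)$ via elementary abelian Sylow $2$-subgroups of exponent~$2$. Your extra hand checks are harmless: your bound $q^3/\sqrt3$ on the $3$-part of non-Steinberg degrees of $^2G_2(q^2)$ is valid though not sharp (the true maximum is $q^2$, attained in degree $q^2(q^4-q^2+1)$), and you additionally take care of $^2G_2(3)'\cong\PSL_2(8)$, which the paper leaves implicit.
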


\begin{proof}
For $\tw2B_2(q^2)$ the only prime dividing the order of the Weyl group is~2,
which is the defining prime, so we are done by Propositions~\ref{prop:defchar}
and~\ref{prop:abelian}. For $^2G_2(q^2)$ the only relevant non-defining prime is
again $\ell=2$, but here Sylow 2-subgroups are elementary abelian, so
$(\ddagger)$ holds trivially. Finally, the exceptional covering groups have
already been discussed in Proposition~\ref{prop:spor}.
\end{proof}

\begin{prop}   \label{prop:small exc}
 Conjecture A holds for all quasi-simple groups $S$ such that
 $S/\bZ(S)$ is one of $G_2(q)$ or $\tw2F_4(q^2)'$.
\end{prop}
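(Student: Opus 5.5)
We will reduce Conjecture~A for $S$ to a prime-by-prime check at the primes dividing $|S|$, following the introduction: given $\chi\in\Irr(S)$ and $g\in S$ with $\chi(g)\ne0$, it suffices to show that $o(g)_\ell$ divides $(|S|/\chi(1))_\ell$ for every prime $\ell$. We split according to $\ell$.

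\emph{Defining characteristic.} For $\ell=p$ (so $p=3$ for $G_2(q)$ with $3|q$, and $p=2$ for $\tw2F_4(q^2)$), Proposition~\ref{prop:defchar} gives $(\ddagger)$ for $\bG^F$ and hence, since $\bZ(\bG^F)=1$ here, for $S$. The Tits group $\tw2F_4(2)'$ and the groups $G_2(3)$, $G_2(4)$ with exceptional Schur multipliers are already covered by Proposition~\ref{prop:spor}.

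\emph{Abelian Sylow cases.} For $\ell\ne p$ not dividing the Weyl group order, Proposition~\ref{prop:abelian} applies. The Weyl group of $G_2$ has order $12$ and the twisted Weyl group relevant for $\tw2F_4$ is dihedral of order $16$. Hence only $\ell\in\{2,3\}$ remain for $G_2(q)$, and only $\ell\in\{2,3\}$ for $\tw2F_4(q^2)$; there $\ell=3$ is the only odd prime to treat, since $\ell=2$ is the defining characteristic.

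\emph{Remaining small primes.} For these the plan is to use the generic character tables available in {\sf Chevie}/the literature: Chang--Ree and Enomoto for $G_2(q)$, and Malle's table for $\tw2F_4(q^2)$. For each generic character $\chi$ and each class type of $g$ with $\chi(g)\ne0$, we check the divisibility directly. More efficiently, we would first apply Theorem~\ref{thm:geoff} together with Brauer's bound $\exp(\bZ(D))\le\ell^{\df(\chi)}$, or Corollary~\ref{cor:rob}, on blocks. This disposes of all blocks whose defect groups have $|\bZ(D)|\ge\exp(D)$ and of every character whose defect is large. We would then be left with the few characters of small defect in the principal $\ell$-block, such as the cuspidal unipotent characters $G_2[1],G_2[\pm\theta],G_2[-1]$ of Example~\ref{exmp:G2}. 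For those, we inspect their values: a cuspidal unipotent character should vanish on semisimple elements $g$ whose $\ell$-part has order exceeding $\ell^{\df(\chi)}$, since such $g$ lie in a proper Levi subgroup or a torus not meeting the cuspidal support. For example, $G_2[1]$ (defect~3 at $\ell=2$) must vanish on all elements of order divisible by $8$; we verify this from the tables.

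\emph{Main obstacle.} The main difficulty is this last verification for $\ell=2,3$ in $\tw2F_4(q^2)$ at $\ell=3$, and for $G_2(q)$ at $\ell=2,3$, including the case $\ell=3$ where $3$ is bad. The table is large and the relevant values are polynomial in $q$ with congruence conditions. We would organize the check by the maximal tori containing $g_\ell$: using the Deligne--Lusztig character formula, $\chi(g)$ is a combination of $R_T^G(\theta)(g)$ over tori $T$ containing the semisimple part of $g$. This reduces the question to checking that characters whose defect is smaller than $\nu_\ell(|T|)$ have zero multiplicity in the $R_T^G$ for such tori, which can be read off from the decomposition of unipotent characters into Deligne--Lusztig characters.
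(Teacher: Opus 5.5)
Your outer reduction matches the paper's: $\ell=p$ via Proposition~\ref{prop:defchar}, $\ell\nmid|W|$ via Proposition~\ref{prop:abelian}, and the exceptional covers and the Tits group via Proposition~\ref{prop:spor}. The gap is in the analysis at $\ell\in\{2,3\}$, which is where the content lies: you only announce it, and the shortcuts you propose fail at specific points.

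(i) For $\tw2F_4(q^2)$ the group in Proposition~\ref{prop:abelian} is $W(F_4)$, of order $2^7\cdot3^2$. Your dihedral group of order $16$ would exclude $\ell=3$. Yet the Sylow $3$-subgroups are non-abelian of order $3(q^2+1)_3^2$, with exponent $(q^2+1)_3$ if $9\mid q^2+1$ and $3$ otherwise, so this is exactly the case that needs work.

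(ii) Corollary~\ref{cor:rob} and Brauer's bound do not reduce you to the principal block. For $G_2(q)$ at $\ell=2$ there are non-unipotent blocks with non-abelian defect groups, attached to $2'$-elements $s$ with $C_{G^*}(s)$ of type $\SL_3(\pm q)$ or $\GL_2(\pm q)$. Their defect groups (e.g.\ $C_{2^a}\wr C_2$ or semidihedral) have $|\bZ(D)|<\exp(D)$. The paper passes these to the centraliser by Jordan decomposition. For $\tw2F_4(q^2)$ it handles the non-principal $3$-blocks by Morita equivalence with blocks of Levi subgroups of type $A_1$ or $\tw2B_2$.

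Even the paper's remark that $(\ddagger)$ then holds ``by our earlier results'' is too quick, since those results assume $\ell>2$. For $q\equiv3\pmod4$, e.g.\ $q=11$, the block attached to $C_{G^*}(s)\cong\GL_2(q)$ has semidihedral defect groups of exponent $2(q+1)_2\ge8$. It contains the Jordan correspondent of the degree-$(q+1)$ principal series character in the principal $2$-block of $\GL_2(q)$, which has defect $2$, as in the $\GL_2(3)$ example. Conjecture~A survives there because this character is $R_T^G(\theta)$ for a maximally split torus $T$. It therefore vanishes off the conjugates of a Borel subgroup, whose $2$-elements have order at most $2$.

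(iii) Your method for the decisive vanishing statements does not work as stated. The characters that fall below the exponent bound are:
\begin{itemize}
\item at $\ell=3$ (defect $2$): $G_2[1],G_2[\theta],G_2[\theta^2]$ if $q\equiv1\pmod3$, and $\phi_{2,1},G_2[\theta],G_2[\theta^2]$ if $q\equiv-1\pmod3$;
\item at $\ell=2$ (defect $3$): $G_2[1],G_2[-1]$ if $q\equiv1\pmod4$, and $\phi_{2,1},\phi_{2,2}$ if $q\equiv-1\pmod4$.
\end{itemize}
None of these is uniform. Their Deligne--Lusztig multiplicities therefore determine their values only on semisimple elements, while an element $g$ with large $o(g)_\ell$ may have non-trivial unipotent part. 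Moreover $\phi_{2,1},\phi_{2,2}$ lie in the principal series, so your cuspidal-support heuristic does not apply to them.

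The paper does the following instead:
\begin{itemize}
\item it proves the exponent bounds $3(q-\eps)_3$ and $2(q-\eps)_2$ for $G_2(q)$, where $q\equiv\eps\pmod3$, resp.\ $q\equiv\eps\pmod4$;
\item it reads off from Hiss's description of the principal blocks (and Malle's for $\tw2F_4(q^2)$) that all other characters there have defect at least the corresponding exponent;
\item it checks on the explicit character tables that the listed exceptions vanish on all elements of order divisible by $9$, resp.\ $4$, and that the three cuspidal unipotent characters of $\tw2F_4(q^2)$ vanish on all elements of order divisible by $9$.
\end{itemize}

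For cuspidal characters the right conceptual tool is a Curtis-type identity as in Proposition~\ref{prop:hc}, not the Deligne--Lusztig expansion. If $C_G(s)$ lies in a proper split Levi subgroup $L$, then $\chi(su)=({}^*R_L^G\chi)(su)$, and this is $0$ for cuspidal $\chi$.
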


\begin{proof}
By Proposition~\ref{prop:abelian} only the divisors $\ell$ of the order of the
Weyl group need to be considered, and furthermore only for $\ell{\not|}q$ by
Corollary~\ref{cor:def aut}. Thus, for $G=G_2(q)$, we have $\ell\le3$.
By \cite[\S7]{Hi90} the only 3-block of $G$ with non-abelian defect groups
is the principal block $B_0$. Let $\eps\in\{\pm1\}$ with $q\equiv\eps\pmod3$.
The Sylow 3-subgroups of $G$ have exponent at most $3(q-\eps)_3$.
By inspection of \cite{Hi90}, all characters in $\Irr(B_0)$ have defect at
least $2\nu_3(q-\eps)$, except for the three cuspidal unipotent characters
$G_2[1],G_2[\theta],G_2[\theta^2]$ when $\eps=1$,
respectively the three unipotent characters
$\phi_{2,1},G_2[\theta],G_2[\theta^2]$ when $\eps=-1$. All of these are of
defect~2. From the known character tables (see e.g. \cite[Anh.~B]{Hi90}) it
can be seen that these vanish on all elements of order divisible by~9.

For $\ell=2$ first consider the principal block. The Sylow 2-subgroups of $G$
lie in the normaliser of a homocyclic maximal torus of order $(q-\eps)^2$,
where $q\equiv\eps\pmod4$, thus have exponent at most $2(q-\eps)_2$. Then by
inspection all characters in $B_0$ satisfy $(\ddagger)$ except for the two
unipotent characters $G_2[1],G_2[-1]$ if $\eps=1$, respectively
$\phi_{2,1},\phi_{2,2}$ if $\eps=-1$, which have defect~3. By the character
table of $G$ in \cite[Anh.~B]{Hi90}, they vanish on all elements of order
divisible by~4. The other two unipotent blocks are of defect zero.
Again by \cite{Hi90} the non-unipotent blocks with non-abelian defect groups
are labelled by semisimple $2'$-elements with centraliser $\SL_3(\pm q)$ or
$\GL_2(q)$, and here $(\ddagger)$ is always satisfied by our earlier results.

For $G=\tw2F_4(q^2)$ we may assume $q^2\ne2$ by Proposition~\ref{prop:spor},
and only the prime $\ell=3$ is relevant. If $q^2\not\equiv-1\pmod9$, the Sylow
3-subgroups of $G$ have exponent 3, and we are
done. For $q^2\equiv-1\pmod9$, a Sylow 3-subgroup of $G$ is contained in the
centraliser $\SU_3(q^2)$ of a 3-central element and thus has exponent
$(q^2+1)_3$, while $|G|_3=3(q^2+1)_3^2$. By the description in
\cite[Bem.~2]{Ma90} all characters in the principal 3-block of $G$ have defect
at least $(q^2+1)_3$, so~$(\ddagger)$ is satisfied, except for three
cuspidal unipotent characters. The latter vanish on all elements whose order
is divisible by~9, by the table in \cite{Ma90}. Since there are no non-trivial
isolated $3'$-elements in $G$ all other 3-blocks are Morita equivalent to
3-blocks of some proper Levi subgroup of type $A_1$ or $\tw2B_2$ and thus
$(\ddagger)$ holds by our earlier results.
\end{proof}

With the above results, we can finally prove the last two theorems stated in the
introduction:

\begin{proof}[Proof of Theorem C]
This follows from Propositions~\ref{prop:spor}, \ref{prop:Sn},
\ref{prop:2Sn odd}, \ref{prop:defchar}, and Theorem~\ref{thm:ell>7}.
\end{proof}

\begin{proof}[Proof of Theorem D]
This follows from Propositions~\ref{prop:spor}, \ref{prop:defchar},
Theorem~\ref{thm:An}, and Proposition \ref{prop:suzree}.
\end{proof}


\end{document}